	   \def\namedlabel#1#2{\begingroup#2%
			\def\@currentlabel{#2}%
			\phantomsection\label{#1}\endgroup
		}
    \newcommand\mvisiblespace[1][.7em]{%
        	\makebox[#1]{%
        		\kern.07em
        		\vrule height.3ex
        		\hrulefill
        		\vrule height.3ex
        		\kern.07em
        	}
        }
    \newcommand{\HleftG}{
        \mathchoice
              {\displaystyle\mathbin\smalltriangleright}
              {\textstyle\mathbin\smalltriangleright}
              {\scriptstyle\mathbin\smalltriangleright}
              {\scriptscriptstyle\mathbin\smalltriangleright}
        }
    \newcommand{\HrightG}{
        \mathchoice
              {\displaystyle\mathbin\smalltriangleleft}
              {\textstyle\mathbin\smalltriangleleft}
              {\scriptstyle\mathbin\smalltriangleleft}
              {\scriptscriptstyle\mathbin\smalltriangleleft}
        }
\newcommand{\HleftB}{
\mathchoice
{\mathbin{\raisebox{-.9pt}{$\includegraphics[scale=1.3]{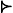}$}}}
{\mathbin{\raisebox{-.9pt}{$\includegraphics[scale=1.3]{semithickHleftB.pdf}$}}}
{\mathbin{\raisebox{-0.6pt}{$\includegraphics[scale=.8]{semithickHleftB.pdf}$}}}
{\mathbin{\raisebox{-.2pt}{$\includegraphics[scale=.6]{semithickHleftB.pdf}$}}}
}
\newcommand{\dom} {\operatorname{dom}}
\newcommand{\lspan}{\operatorname{span}}\newcommand{\ran} {\operatorname{ran}}
\newcommand{\cB}{\mathscr{B}}
    \newcommand{\qB}{q_{\cB}}
\newcommand{\cG}{\mathcal{G}}
\newcommand{\cU}{\mathcal{U}}
\newcommand{\ZS}{Zappa--Sz\'{e}p}
\newcommand{\etale}{\'{e}tale}
\newcommand{\usc}{upper semi-continuous}
\newcommand{\LCH}{locally compact Hausdorff}
\newcommand{\wordforPhi}{factorisation rule}
\newcommand{\wordforphi}{cocycle connector} 
\newcommand{\adjforBlend}{austere}
\newtheorem{theorem}{Theorem}[section]
\newtheorem{corollary}[theorem]{Corollary}
\newtheorem{proposition}[theorem]{Proposition}
\newtheorem{lemma}[theorem]{Lemma}
\newtheorem{claim}{Claim} 
\newtheorem*{claim*}{Claim} 
\newtheorem*{conjecture*}{Conjecture} 
\theoremstyle{definition}
    \newtheorem{definition}[theorem]{Definition}
    \newtheorem{example}[theorem]{Example}
    \newtheorem{remark}[theorem]{Remark}
    \newtheorem*{notation*}{Notation} 
\numberwithin{equation}{section}
\let\ipscriptstyle=\scriptscriptstyle
\def\lipsqueeze{{\mskip -2.0mu}}
\def\ripsqueeze{{\mskip -2.0mu}}
\newcommand{\bfp}[2]{\lipsqueeze\tensor*[_{\ipscriptstyle #1}]{\ast}{_{\ipscriptstyle #2}}\ripsqueeze} 
\newcommand{\norm}[1]{\left\| #1 \right\|}
\newcommand{\inv}{^{-1}}
\newcommand{\z}{^{(0)}}
\DeclareMathOperator{\supp}{supp}
\DeclareMathOperator{\Iso}{Iso}
\newcommand{\bfpsr}{\mathbin{\tensor[_{s}]{\times}{_{r}}}}
\newcommand{\astonetwo}{\bfp{1}{2}}
\newcommand{\asttwoone}{\bfp{2}{1}}
\newcommand{\astij}{\bfp{i}{j}}
\newcommand{\suppo}{\supp^{\circ}}
\newcommand{\red}{\mathrm{r}}
\newcommand{\full}{\mathrm{u}}
\newcommand{\ZST}[1][\Phi]{\mathbin{\bowtie_{#1}}}
\title{%
The
Zappa--Sz\'{e}p 
product of twisted groupoids}
\author{Anna Duwenig}
\address{KU Leuven, Department of Mathematics, Leuven (Belgium)}
\email{anna.duwenig@kuleuven.be}
\author{Boyu Li}
\address{Department of Mathematical Sciences, New Mexico State University, Las Cruces, New Mexico, 88003, USA}
\email{boyuli@nmsu.edu}
\date{\today}
\subjclass[2020]{46L55, 46L05, 22A22}
\keywords{\ZS\ product, twisted groupoid, Cartan subalgebra,
C*-blends
}
\begin{document}

\begin{abstract}
We define and study the external and the internal \ZS\ product of 
twists  over groupoids. We determine when a pair $(\Sigma_{1},\Sigma_{2})$ of twists over a matched pair $(\cG_{1},\cG_{2})$ of groupoids gives rise to a \emph{\ZS\ twist} $\Sigma$ over the \ZS\ product $\cG_{1}\bowtie\cG_{2}$. We prove that the resulting (reduced and full) twisted groupoid C*-algebra of the \ZS\ twist $\Sigma\to \cG_{1}\bowtie\cG_{2}$  is a C*-blend of its subalgebras corresponding to the subtwists $\Sigma_{i}\to \cG_{i}$. Using Kumjian--Renault theory, we then prove a converse: Any C*-blend in which the intersection of the three algebras is a Cartan subalgebra in all of them, arises as the reduced twisted groupoid C*-algebras from such a \ZS\ twist $\Sigma\to \cG_{1}\bowtie\cG_{2}$ of two twists $\Sigma_{1}\to \cG_{1}$ and $\Sigma_{2}\to \cG_{2}$.  
\end{abstract}

\thanks{Anna Duwenig was supported by an FWO Senior Postdoctoral Fellowship (project number 1206124N), and she thanks Jonathan Taylor and Ying-Fen Lin for helpful discussions}

\maketitle

\section{Introduction}

The \ZS\ product originated from the study of composition and decomposition of groups \cite{Szep1950}. A group $K$ is called an \emph{internal \ZS\ product} of its two subgroups $G$ and $H$ if
 
    \begin{center}
    \hypertarget{item:intro:K product}{(1)}~$K=G\cdot H$, and
    \qquad
    \hypertarget{item:intro:G cap H}{(2)}~$G\cap H$ is the trivial group $\{e\}$.
    \end{center}
In this case, every element in $K$ can be written uniquely as a product $gh$ for some $g\in G$ and $h\in H$.
In particular, if we take any pair $(h',g')\in H\times G$, then there must exist a unique pair $(g,h)\in G\times H$ such that $h'g'=gh$ in $K$. This induces a right group action by $G$ on the space $H$ by $(h',g')\mapsto 
{%
h'\HrightG g'\coloneq
}%
h$
and a left group action of $H$ on the space $G$ by $(h',g')\mapsto
{%
h'\HleftG g'\coloneq
}%
g$. 
Since the group multiplication of $K$ is associative, these two actions of its subgroups $G$ and $H$ on each other interact with one-another `in a compatible way'. For example, if $h\in H$ acts on a product $g_{1}g_{2}$ of elements of $G$, then
\begin{equation}\label{eq:intro:ZS compatibility}
    h\HleftG (g_{1}g_{2}) = (h\HleftG g_{1}) \bigl([h\HrightG g_{1}]\HleftG g_{2}\bigr),
\end{equation}
and likewise for the action $(h_{1}h_{2})\HrightG g$ of $g\in G$ on a product $h_{1}h_{2}\in H$.
Conversely, any pair of such compatible actions of two groups $G$ and $H$ on one-another, allows us to equip the Cartesian product $G\times H$ with a group multiplication that encodes these actions; this group is denoted $G\bowtie H$ and is called the \emph{external \ZS\ product}.
A semi-direct product
    $G\rtimes H$
is a special case of a \ZS\ product: in this instance, the subgroup $G$ is normal in the product, it acts trivially on the subgroup $H$, and the action of $H$ on $G$ is by group homomorphisms.

We are interested in extending the notion of \ZS\ products to operator algebras. There is a long history in the study of operator algebras 
of encoding group actions and their dynamics as linear operators on Hilbert spaces. 
For example, to the continuous action of a topological group $G$ on a space $X$, 
one can associate a crossed product C*-algebra $C_0(X)\rtimes G$ which can be understood as a non-commutative replacement of the
quotient space $X/G$ which, as a topological space, might be rather ill-behaved.
Properties of the group dynamics are often reflected in the properties of the corresponding operator algebra and vice versa.
\ZS\ products encode two-way actions between two structures, and they have been studied in various algebraic contexts
\cite{ABRW2019, BKQS18, BRRW, BPRRW:ZS, EP2017, LRRW14, LRRW18, LY2022, LY2019, LY2019b, Nekrashevych2009, OP23, Starling2015}.
Our recent study of
the
\ZS\ product of Fell bundles by a groupoid \cite{DuLi:ZS} has resulted in 
vastly general imprimitivity theorems \cite{DuLi:ImprimThms-pp} that extend
many classical results arising from group dynamics. 

This paper continues our quest to study \ZS\ products in C*-algebras. Our main goal is to define the \ZS\ product of twisted groupoids.
Such
groupoids arise naturally in the study of C*-algebras. 
For instance,
Renault \cite{Renault:Cartan} showed that 
any
C*-algebra with a Cartan subalgebra 
can be realized as 
a twisted groupoid C*-algebra, and recent
work of X.\ Li \cite{XLi2020_Cartan} showed that every simple classifiable C*-algebra in Elliot's classification program has 
such
a Cartan subalgebra
and hence a twisted groupoid model.

Let us now explain the content of this paper in more detail.
Suppose we are
given two groupoids $\cG_{1}$ and  $\cG_{2}$ that share the same unit space $\cU$,
and suppose these groupoids act on one another in a `compatible way' akin to the compatibility assumption at~\eqref{eq:intro:ZS compatibility} in the group case;
to be precise,
they satisfy Conditions~\ref{item:ZS1}--\ref{item:ZS9} in our preliminary Section~\ref{ssec:ZS of gpds}.
{%
The pair $(\cG_{1},\cG_{2})$ is called \emph{a matched pair} in this situation.
}%
Then it is well known that their fibred product
\[
\cG_{1}\bfpsr \cG_{2}
=
\{(x,g)\in \cG_{1}\times\cG_{2}: s(x)=r(g)\}
\]
can be given the structure of a groupoid
whose multiplication encodes both actions and
with respect to which the canonical maps $\cG_{j}\to \cG_{1}\bfpsr\cG_{2}$ ($j=1,2$) are injective groupoid homomorphisms. The resulting groupoid is called the \emph{\ZS\ product} of $\cG_{1}$ and $\cG_{2}$ and is denoted by $\cG_{1}\bowtie\cG_{2}$. The conditions that need to be satisfied for this to work
boil down to the existence of a certain type of homeomorphism
\begin{equation}\label{eq:Psi,intro}   
\cG_{2}\bfpsr \cG_{1} \to 
\cG_{1}\bfpsr \cG_{2},
\quad
(g,x)\mapsto (g\HleftG x,g\HrightG x).
\end{equation}
The question we are concerned with in Section~\ref{sec:EZS} is the following. Suppose $\Sigma_{1}$ and $\Sigma_{2}$ are twists over $\cG_{1}$ and $\cG_{2}$, respectively; that is, 
we have central groupoid extensions
  \[\mathbb{T}\times \cU    \stackrel{\jmath_{j}}{\longrightarrow} \Sigma_{j}  \stackrel{\pi_{j}}{\longrightarrow} \cG_{j}, \]
which, by abuse of notation, we often  simply write as $\Sigma_{j}\to\cG_{j}$.
Under what conditions can one build a twist over $\cG_{1}\bowtie\cG_{2}$ out of them?

Clearly, the space $\Sigma_{1}\bfpsr \Sigma_{2}$ will not be the right candidate:
since $\Sigma_{i}$ 
is locally homeomorphic to $\mathbb{T}\times\cG_{i}$, the fibred product $\Sigma_{1}\bfpsr \Sigma_{2}$ is locally homeomorphic to $(\mathbb{T}\times\cG_{1})\bfpsr (\mathbb{T}\times\cG_{2}) = \mathbb{T}^2\times (\cG_{1}\bowtie\cG_{2})$ 
instead of $\mathbb{T}\times (\cG_{1}\bowtie\cG_{2}) $.
This crude argument already shows that
one has to quotient out a copy of $\mathbb{T}$ 
of the fibred product of $\Sigma_{1}$ and $\Sigma_{2}$ in order to have hopes of obtaining
a twist over $\cG_{1}\bowtie\cG_{2}$. 
So let us denote the quotient of $\Sigma_{1}\bfpsr \Sigma_{2}$ by the canonical
{%
diagonal
}%
$\mathbb{T}$-action by $\Sigma_{1}\ast_{\mathbb{T}}\Sigma_{2}$.

To be able to
give 
$\Sigma_{1}\ast_{\mathbb{T}}\Sigma_{2}$
the structure of a groupoid, we must assume the existence of a so-called \emph{\wordforPhi}: a certain type of homeomorphism
\[
\Phi\colon \Sigma_{2}\ast_{\mathbb{T}}\Sigma_{1}\to \Sigma_{1}\ast_{\mathbb{T}}\Sigma_{2}.
\]
As is visible from~\eqref{eq:Psi,intro}, $\Phi$ is, in its essence, a replacement for two-way actions of $\Sigma_{1}$ and $\Sigma_{2}$ on each other.
In the presence of 
such
a \wordforPhi, the space $\Sigma_{1}\ast_{\mathbb{T}}\Sigma_{2}$ can be given the structure of a topological groupoid 
which depends on $\Phi$ (Proposition~\ref{prop.external.twist}); we denote the groupoid by $\Sigma_{1} \ZST \Sigma_{2}$.
It is 
a twist over $\cG_{1}\bowtie\cG_{2}$ (Theorem~\ref{thm:external ZS-product}) and 
we hence call it the
\emph{external \ZS\ twist}.
We prove that, in the case that one of the two twists is trivial, then $\Sigma_{1}\ZST\Sigma_{2}$ is canonically isomorphic to the \ZS\ product of a Fell line bundle by a groupoid as constructed in \cite{DuLi:ZS} (Example~\ref{ex:MT if one is trivial,pt2}),
which shows that our construction behaves as expected with regard to known constructions in the literature.

In Section~\ref{sec:IZS}, we introduce a notion of \emph{internal} \ZS\ product of twists.
Suppose  we are given a twist $\Sigma\to \cG$, and let $\Sigma_{1}, \Sigma_{2}$ be subgroupoids of $\Sigma$.
 The obvious twist analogue of Condition~\hyperlink{item:intro:K product}{(1)} in the setting of group \ZS\ products, is the condition that $\Sigma=\Sigma_{1}\cdot\Sigma_{2}$. If we understand Condition~\hyperlink{item:intro:G cap H}{(2)} as the intersection of $G$ and $H$ being as small as possible, then
in the case of twists, 
it
translates to $\Sigma_{1}\cap\Sigma_{2}=\mathbb{T}\times \cU$.
Given both of these assumptions
and that
the subgroupoids $\Sigma_{1}$ and $\Sigma_{2}$ are closed,
then we say that $\Sigma$ is an \emph{internal \ZS\ product} of $\Sigma_{1}$ and $\Sigma_{2}$. 
It turns out that every such internal \ZS\ twist $\Sigma$ gives rise to a 
\wordforPhi~$\Phi$,
meaning that it is also
an external \ZS\ twist. Moreover, the converse is also true: every external \ZS\ twist 
{%
is
}
an internal \ZS\ twist (Theorem~\ref{thm:IZS=MT}).

The \ZS\ product of twisted groupoids occurs naturally in many contexts.  
In Section~\ref{sec:cocycles}, we study the setting of twists induced by continuous, normalized, $\mathbb{T}$-valued $2$-cocycles. Any such map $c$ on a groupoid $\cG$ gives rise to a twist $\Sigma_{c}$ over $\cG$ in a canonical
way. Given
$2$-cocycles $c_{1},c_{2}$ on the matched
pair $(\cG_{1},\cG_{2})$ of groupoids,
the pair  $(\Sigma_{c_{1}},\Sigma_{c_{2}})$ 
of twists
allows a \wordforPhi\  if and only if the pair $(c_{1},c_{2})$
of $2$-cocycles
allows what we call \emph{\wordforphi}, and in this situation, the external 
\ZS\ twist
is also induced by a $2$-cocycle (Proposition \ref{prop:EZS twist:2-cocycles}). We then briefly study the internal \ZS\ product of $2$-cocycles (Proposition \ref{prop:IZS twist:2-cocycles}). 

In our final Section~\ref{sec:Cartan}, we move our results into the realm of C*-algebras. We first show that the (reduced or full) groupoid C*-algebra of the \ZS\ groupoid $\cG_{1}\bowtie\cG_{2}$ twisted by the \ZS\ twist $\Sigma_{1}\ZST \Sigma_{2}$, is a C*-blend of the twisted C*-algebras of its components 
 $\Sigma_{1}\to\cG_{1}$ and  $\Sigma_{2}\to\cG_{2}$
(Theorem~\ref{thm:MT give C*-blends}). This result, applied to effective groupoids, then motivates the following converse:
suppose $A$ is a C*-algebra that has two subalgebras $A_{1}, A_{2}$ such that
    \begin{center}
    \hypertarget{item:intro:A product}{(a)}~$A_{1}\cdot A_{2}$ is dense in $A$, and
    \qquad
    \hypertarget{item:intro:A_{1} cap A_{2}}{(b)}~$A_{1}\cap A_{2}$ is a Cartan subalgebra in   $A_{1},A_{2},$ and $A$.
    \end{center}
    
Kumjian--Renault theory then tells us that each of the three algebras $A$, $A_{j}$ ($j=1,2$) corresponds to a twisted groupoid 
$\Sigma\to\cG$ and  $\Sigma_{j}\to\cG_{j}$,
respectively, with $\cG,\cG_{j}$ effective.
We prove that  
$\Sigma\to\cG$ is precisely an internal \ZS\ product of
$\Sigma_{1}\to\cG_{1}$ and  $\Sigma_{2}\to\cG_{2}$
(Theorem~\ref{thm:from C*-blend}).
 We would like to point out that Conditions~\hyperlink{item:intro:A product}{(a)} and~\hyperlink{item:intro:A_{1} cap A_{2}}{(b)} closely resemble Conditions~\hyperlink{item:intro:K product}{(1)} and~\hyperlink{item:intro:G cap H}{(2)}, respectively, in an internal \ZS\ product. Therefore, this can be viewed as an intrinsic notion of internal \ZS\ product of C*-algebras.

Our work extends the \ZS\ product construction to the much broader twisted groupoid context, which is applicable to a wider range of C*-algebras. Our notion of \wordforPhi\ brings a key insight that allows us to define the external \ZS\ product without using actions. This is a crucial step towards our ongoing research on the \ZS\ product of Fell bundles. 

\section{Preliminaries}\subsection{Groupoid and Twists}

Throughout this paper, we assume
that our groupoids, denoted by letters such as $\cG$, are
locally compact and Hausdorff. We often denote its unit space $\cG\z$ by $\cU$. A groupoid is called \emph{$r$-discrete} if the unit space $\cU$ is open in $\cG$. The range and source maps of $\cG$ are denoted by $r, s\colon \cG\to\cU$, respectively,
and $\cG$ is called \emph{\etale} if these
maps are local homeomorphisms. 

Given $u\in\cU$, denote $\cG u = \{g\in \cG: s(g)=u\}$ and $u\cG=\{g\in \cG: r(g)=u\}$. The isotropy group at $u$ is defined by $\cG_u^u = u\cG\cap \cG u$, and the isotropy bundle by $
\Iso(G)
=\bigcup_{u\in\cU} \cG_u^u$. A groupoid is called \emph{principal} if $
\Iso(G)
=\cU$, and \emph{effective} if the interior of $
\Iso(G)
$ is $\cU$.

Twists over groupoids were
first introduced in \cite{Kum:Diags}. 
Here, we will give the definition as stated in \cite[Definition 2.1]{CDGaHV:2024:Nuclear}.
 \begin{definition}\label{def:twist} Let $\cG $ be a (\LCH) groupoid with unit space $\cU$, and regard $\mathbb{T} \times \cU $ as a trivial group bundle with fibers $\mathbb{T}$.  A \emph{twist} $(\Sigma , \jmath, \pi)$ over $\cG $ consists of a \LCH\ groupoid $\Sigma $ and groupoid homomorphisms $\jmath$, $\pi$ such that 
  \[\mathbb{T}\times \cU    \stackrel{\jmath}{\longrightarrow} \Sigma  \stackrel{\pi}{\longrightarrow} \cG \]
is a central groupoid extension, which means that
\begin{enumerate}[label=\textup{(T\arabic*)}]
    \item\label{it:twist:jmath}  $\jmath\colon \mathbb{T} \times\cU\to \pi\inv (\cU)$ is a homeomorphism, where $\pi\inv (\cU)$ has the subspace topology from~$\Sigma $, and it satisfies
    $\jmath(1,\pi(u)) = u$ for all $u\in \Sigma\z$;
    \item\label{it:twist:pi} $\pi$ is a continuous, open surjection; and
    \item\label{it:twist:T central}
    $z\cdot e \coloneq  \jmath(z,\pi(r(e)))e$ equals $e\jmath( z,\pi(s(e)))$
    for all $e\in \Sigma $ and $z\in\mathbb{T}$.
\end{enumerate}
\end{definition}
As explained in \cite[p.\ 5]{CDGaHV:2024:Nuclear}, it follows that $\Sigma$ is locally trivial, that $\pi$ is proper, and that we can identify $\Sigma\z$ with $\cU$ via $\pi$.

\subsection{\ZS\ products of groupoids}\label{ssec:ZS of gpds}

The \ZS\ product of groupoids was first introduced in \cite{AA2005} and its application in groupoid C*-algebras was further explored in \cite{BPRRW:ZS}.
Here, we will recall its construction.

Given two groupoids $\cG_{1}, \cG_{2}$ with the same unit space $\cG_{1}\z=\cG_{2}\z=\cU$, define 
\[\cG_{2}\bfpsr \cG_{1} =\{(g,x)
{%
\in \cG_{2}\times\cG_{1} :
}
s(g)=r(x)\}.\]
{%
We say that $(\cG_{1},\cG_{2})$ is
}
a \emph{pair of matched groupoids} if there are two continuous maps
\begin{align*}
    \mvisiblespace\HleftG\mvisiblespace\colon &\cG_{2}\bfpsr \cG_{1} \to \cG_{1},&& (g,x) \mapsto g\HleftG x, \\
    \mvisiblespace\HrightG\mvisiblespace\colon&\cG_{2}\bfpsr \cG_{1} \to \cG_{2},&& (g,x) \mapsto g\HrightG x, 
\end{align*}
satisfying the following Properties \ref{item:ZS1}--\ref{item:ZS9}\footnote{%
We point out that we are following the numbering in \cite{DuLi:ZS}, which does not agree with that in \cite{BPRRW:ZS}.
},
where $(h,g)\in \cG_{2}^{(2)}$ and $(x,y)\in \cG_{1}^{(2)}$ are
such that $s(g)=r(x)$. 
\begin{multicols}{2}
\begin{enumerate}[label=(ZS\arabic*), leftmargin=2.5cm]
\item\label{item:ZS1} $(hg)\HleftG x=h\HleftG (g\HleftG x)$
\item\label{item:ZS2} $r(g\HleftG x)=r(g)$
\item\label{item:ZS3} $r(x)\HleftG x=x$
\item\label{item:ZS4} $g\HrightG (xy)=(g\HrightG x)\HrightG y$
\item\label{item:ZS5} $s(g\HrightG x)=s(x)$
\item\label{item:ZS6} $g\HrightG s(g)=g$
\end{enumerate}
\end{multicols}
\begin{enumerate}[label=(ZS\arabic*), leftmargin=2.5cm]\setcounter{enumi}{6}
\item\label{item:ZS7} $s(g\HleftG x)=r(g\HrightG x)$
\item\label{item:ZS8} $g\HleftG (xy)=(g\HleftG x)([g\HrightG x] \HleftG y)$
\item\label{item:ZS9} $(hg)\HrightG x=(h \HrightG [g \HleftG x]) (g\HrightG x)$
\end{enumerate}
Properties \ref{item:ZS1}--\ref{item:ZS3} mean
that
$\HleftG$ is a left $\cG_{2}$ action on the space $\cG_{1}$ 
with momentum map $r\colon \cG_{1}\to \cU=\cG_{2}\z$; Properties \ref{item:ZS4}--\ref{item:ZS6}  mean that
$\HrightG$ is a right $\cG_{1}$ action on the space $\cG_{2}$
with momentum map $s\colon \cG_{2}\to \cU=\cG_{1}\z$; and the remaining Properties \ref{item:ZS7}--\ref{item:ZS9} are needed to turn $\cG_{1}\bfpsr\cG_{2}$ into a groupoid, explained in more detail below.
One can show that the following additional properties hold:
\begin{enumerate}[label=(ZS\arabic*), leftmargin=2.5cm]\setcounter{enumi}{9}
\item\label{item:ZS10}  $g\HleftG s(g)=r(g)$
\item\label{item:ZS11}  $r(x)\HrightG x = s(x)$
\item\label{item:ZS12}  $(g\HleftG x)\inv = (g\HrightG x)\HleftG x\inv$
\item\label{item:ZS13}  $(g\HrightG x)\inv = g\inv \HrightG (g\HleftG x)$
\end{enumerate}

 
In \cite{BPRRW:ZS}, the maps $\HleftG$ and $\HrightG$ are called the \emph{action map} and the \emph{restriction map}, respectively.
We have adopted 
the notation
$g\HleftG x$ and $g\HrightG x$ from \cite{AA2005} 
in place of writing $g\cdot x$ and $g|_x$, respectively,
to avoid confusion in many computations.

With these assumptions, one can put a groupoid structure on the
fibred
product space $\cG_{1}\bfpsr \cG_{2}$ by 
defining
the multiplication
\begin{equation}\label{eq:EZS gpd:multiplication}(y,g)(x,h) = \bigl(y(g\HleftG x), (g\HrightG x) h\bigr),\quad\text{if } s(g)=r(x),\end{equation}
and inverse
\[(y,g)\inv = (g\inv \HleftG y\inv, g\inv\HrightG y\inv).\]
This is called the \emph{\ZS\ product groupoid}, which is denoted by $\cG_{1}\bowtie \cG_{2}$.

Recall from \cite[Corollary 8]{BPRRW:ZS} that
the pair $(\cG_{2}, \cG_{1})$ is also matched and that the map
\begin{align}
\label{eq:Psi}
    \Psi\colon& \cG_{2}\bfpsr \cG_{1} \to \cG_{1}\bfpsr \cG_{2},
    &\quad&
    (g,x)
    \mapsto(g\HleftG x, g\HrightG x)
\end{align}
is a groupoid isomorphism $\cG_{2}\bowtie \cG_{1} \cong \cG_{1}\bowtie \cG_{2}$.
The map $\Psi$ is called a factorisation rule in \cite[Definition 3.1]{MS:2023:ZS-pp} 
where such maps are
used to define the \ZS\ product of matched categories 
since one
can phrase Conditions~\ref{item:ZS1}--\ref{item:ZS9} entirely in terms of this map 
(Lemma~\ref{lem:ZS gpd from Psi} below). 
While this rephrasing is
a bit unwieldy,
it serves as our motivation for Definition~\ref{def:MT} later. In the following, we let $\bullet$ denote the left-action of $\cG_{1}$ (resp.\ right-action of $\cG_{2}$) on the space $\cG_{1}\bfpsr\cG_{2}$ given by translation.
\begin{lemma}[{%
cf.\ \cite[Lemma 3.4]{MS:2023:ZS-pp}%
}]\label{lem:ZS gpd from Psi}
    Suppose $\cG_{1}$ and $\cG_{2}$ are two groupoids with the same unit space $\cU$. Then $(\cG_{1},\cG_{2})$ is a pair of matched groupoids if and only if there exists a homeomorphism
    \[
    \Psi\colon \cG_{2}\bfpsr \cG_{1} \to \cG_{1}\bfpsr \cG_{2}
    \]
    satisfying the following properties, where $(g,x)\in \cG_{2}\bfpsr \cG_{1}$ is arbitrary and we write the element $\Psi(g,x)$ as $(y,h)$.
    \begin{enumerate}[label=\textup{(FR\arabic*)},leftmargin = 1.5cm] 
        \item\label{item:Psi:2+5} $r(g)=r(y)$ and $s(x)=s(h)$;
     \item\label{item:Psi:3+11,6+10} $\Psi(r(x),x)=(x,s(x))$ and $\Psi(g,s(g))=(r(g),g)$;
        \item\label{item:Psi:4+8} if $x'\in s(x)\cG_{1}$, then $\Psi(g,xx')=y\bullet\Psi(h,x')$; and
        \item\label{item:Psi:1+9} if $g'\in \cG_{2}r(g)$, then $\Psi(g'g,x)=\Psi(g',y)\bullet h$.
    \end{enumerate}
    In this setting, the relationship between the groupoid multiplication of $\cG_{1}\bowtie\cG_{2}$ and the map  $\Psi$ is determined by
    \begin{align}\label{eq:EZS gpd:multiplication via Psi}
        (y,g)(x,h) 
        =
        y\bullet \Psi(g,x) \bullet h.
    \end{align}
\end{lemma}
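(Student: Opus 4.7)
The strategy is to verify both directions by translating the conditions coordinate-by-coordinate between the pair of maps $\mathbin\smalltriangleright,\mathbin\smalltriangleleft$ and the single map $\Psi$, and then to read the multiplication formula directly off the definition of $\Psi$.

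For the forward direction, assume $(\cG_{1},\cG_{2})$ is matched and define $\Psi(g,x):=(g\HleftG x,\,g\HrightG x)$. This lands in $\cG_{1}\bfpsr\cG_{2}$ by~\ref{item:ZS7}, is continuous because both component maps are, and is a homeomorphism by the already-cited \cite[Corollary 8]{BPRRW:ZS}. Writing $\Psi(g,x)=(y,h)$, condition~\ref{item:Psi:2+5} is the conjunction of~\ref{item:ZS2} and~\ref{item:ZS5}; the two halves of~\ref{item:Psi:3+11,6+10} correspond respectively to the pairs (\ref{item:ZS3},\ref{item:ZS11}) and (\ref{item:ZS10},\ref{item:ZS6}). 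For~\ref{item:Psi:4+8}, unpacking the right-hand side componentwise yields
\[
y\bullet\Psi(h,x')
=
\bigl((g\HleftG x)([g\HrightG x]\HleftG x'),\ (g\HrightG x)\HrightG x'\bigr),
\]
whose two components agree with $g\HleftG(xx')$ and $g\HrightG(xx')$ by~\ref{item:ZS8} and~\ref{item:ZS4}, respectively. Condition~\ref{item:Psi:1+9} is symmetric and reduces to~\ref{item:ZS9} in the first coordinate and~\ref{item:ZS1} in the second.

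For the reverse direction, given such a $\Psi$, define $g\HleftG x$ and $g\HrightG x$ as the first and second components of $\Psi(g,x)$; continuity of each follows from continuity of $\Psi$. Axioms~\ref{item:ZS2} and~\ref{item:ZS5} come from~\ref{item:Psi:2+5}, while~\ref{item:ZS3} and~\ref{item:ZS6} come from~\ref{item:Psi:3+11,6+10} (which additionally delivers the auxiliary identities~\ref{item:ZS10} and~\ref{item:ZS11}). Expanding~\ref{item:Psi:4+8} componentwise gives~\ref{item:ZS8} in the first coordinate and~\ref{item:ZS4} in the second; expanding~\ref{item:Psi:1+9} likewise yields~\ref{item:ZS1} and~\ref{item:ZS9}. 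Axiom~\ref{item:ZS7} is encoded in the very statement that $\Psi$ takes values in $\cG_{1}\bfpsr\cG_{2}$, i.e.\ that $s(y)=r(h)$.

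The multiplication formula~\eqref{eq:EZS gpd:multiplication via Psi} then requires no new ingredient: unpacking,
\[
y\bullet\Psi(g,x)\bullet h = \bigl(y(g\HleftG x),\,(g\HrightG x)h\bigr),
\]
which is exactly~\eqref{eq:EZS gpd:multiplication}. The only subtlety anywhere in the argument is the bookkeeping of composability in~\ref{item:Psi:4+8} and~\ref{item:Psi:1+9}; for instance, that $(h,x')\in\cG_{2}\bfpsr\cG_{1}$ follows from $s(h)=s(x)=r(x')$ via~\ref{item:Psi:2+5}, and that $y\bullet\Psi(h,x')$ is well defined follows from the fact that $\Psi$ has the correct target (equivalently, from~\ref{item:ZS7}). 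This is the main \emph{potential} pitfall, but it is routine rather than a genuine obstacle.
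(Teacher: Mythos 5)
Your proof is correct and follows essentially the same route as the paper: both directions are handled by matching each (FR) axiom with the corresponding pair of (ZS) axioms, with the codomain of $\Psi$ encoding~\ref{item:ZS7}, and the multiplication formula read off directly from~\eqref{eq:EZS gpd:multiplication}. One trivial slip: in your unpacking of~\ref{item:Psi:1+9} the coordinate labels are swapped --- the first coordinate of $\Psi(g'g,x)=\Psi(g',y)\bullet h$ is the $\HleftG$-identity~\ref{item:ZS1} and the second is~\ref{item:ZS9}, not the other way around.
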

\begin{proof}
    If $(\cG_{1},\cG_{2})$ is a matched pair and we let $\Psi$ be the map in~\eqref{eq:Psi}, then 
    \begin{itemize}
        \item \ref{item:ZS2} and \ref{item:ZS5} imply  \ref{item:Psi:2+5};
        \item \ref{item:ZS3} and \ref{item:ZS11} imply the first equality in \ref{item:Psi:3+11,6+10};
        \item \ref{item:ZS6} and \ref{item:ZS10} imply the second equality in \ref{item:Psi:3+11,6+10};
        \item \ref{item:ZS4} and \ref{item:ZS8} imply  \ref{item:Psi:4+8}; and
        \item \ref{item:ZS1} and \ref{item:ZS9} imply \ref{item:Psi:1+9}.
    \end{itemize}
    Conversely, assume we have a map $\Psi$ satisfying the factorisation rules above. We then let $g\HleftG x \in \cG_{1}$ and $g\HrightG x \in \cG_{2}$ be the unique elements such that $\Psi(g,x)=(g\HleftG x, g\HrightG x)$. Since $\Psi$ is a homeomorphism, the maps $\HleftG$ and $\HrightG$ are well defined and continuous, and the fact that $\cG_{1}\bfpsr \cG_{2}$ is the codomain of $\Psi$  implies \ref{item:ZS7}. The other properties \ref{item:ZS1}--\ref{item:ZS6}, \ref{item:ZS8}, \ref{item:ZS9} follow as in the list above, just with reverse implication. Equation~\eqref{eq:EZS gpd:multiplication via Psi} now follows immediately from Equation~\eqref{eq:EZS gpd:multiplication}.
\end{proof}

Furthermore, we conclude with \cite[Proposition 7]{BPRRW:ZS}
(see also \cite[Proposition 3.9]{MS:2023:ZS-pp}).

\begin{lemma}\label{lem:2 unique decomp in ZS}
 If $(\cG_{1}, \cG_{2})$ is a matched pair of groupoids, then for $i\neq j$, the multiplication maps  $\cG_{i}\bfpsr \cG_{j}\to \cG_{1}\bowtie\cG_{2}$ are bijections, and if $(g,x)\in \cG_{2}\bfpsr \cG_{1}$, then
 \begin{equation}\label{eq:2 decompositions in ZS product}
     gx = (g\HleftG x) (g\HrightG x).
 \end{equation}
\end{lemma}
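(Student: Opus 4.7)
The plan is to use the canonical embeddings $\cG_{1}\hookrightarrow \cG_{1}\bowtie\cG_{2}$, $x\mapsto (x,s(x))$, and $\cG_{2}\hookrightarrow \cG_{1}\bowtie\cG_{2}$, $g\mapsto (r(g),g)$, and to compute the two relevant multiplication maps explicitly using the formula \eqref{eq:EZS gpd:multiplication}. With those formulas in hand, both bijectivity claims and the factorisation identity become essentially immediate.

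For the multiplication map $\cG_{1}\bfpsr\cG_{2}\to\cG_{1}\bowtie\cG_{2}$, I will take $(x,h)\in \cG_{1}\bfpsr\cG_{2}$ and compute the product $(x,s(x))(r(h),h)$. Invoking \eqref{eq:EZS gpd:multiplication}, the two actions $s(x)\HleftG r(h)$ and $s(x)\HrightG r(h)$ collapse to units by \ref{item:ZS3} and \ref{item:ZS6} respectively (noting $s(x)=r(h)$), so the product reduces to $(x,h)$ itself. Hence this multiplication map is literally the identity on $\cG_{1}\bfpsr\cG_{2}$, which is trivially a bijection.

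For the reverse multiplication map $\cG_{2}\bfpsr\cG_{1}\to\cG_{1}\bowtie\cG_{2}$, I expect the answer to be the homeomorphism $\Psi$ of \eqref{eq:Psi}. The plan is to compute $(r(g),g)(x,s(x))$ for $(g,x)\in \cG_{2}\bfpsr\cG_{1}$ using \eqref{eq:EZS gpd:multiplication}, this time collapsing the outer factors via \ref{item:ZS2} (which gives $r(g\HleftG x)=r(g)$, so the leftmost $r(g)$ is absorbed) and \ref{item:ZS5} (which absorbs the rightmost $s(x)$). This leaves precisely $(g\HleftG x,\,g\HrightG x)=\Psi(g,x)$. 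Since $\Psi$ is a homeomorphism, this map is a bijection. Equation~\eqref{eq:2 decompositions in ZS product} then follows by combining the two computations: the left-hand side $gx$ in $\cG_{1}\bowtie\cG_{2}$ is by definition the just-computed product, i.e.\ $\Psi(g,x)$, while the right-hand side $(g\HleftG x)(g\HrightG x)$ is composable by \ref{item:ZS7} and, by the first computation applied to $(g\HleftG x,\,g\HrightG x)\in\cG_{1}\bfpsr\cG_{2}$, evaluates to the same element $(g\HleftG x,\,g\HrightG x)$.

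The only real obstacle is bookkeeping: in each calculation one has to keep track of which element is viewed as sitting in $\cG_{1}$ versus $\cG_{2}$ and invoke the right pair of unit identities (\ref{item:ZS3} versus \ref{item:ZS10}, \ref{item:ZS6} versus \ref{item:ZS11}) depending on whether the unit is hit on the left or the right of the operation. Beyond that accounting, no conceptual difficulty arises.
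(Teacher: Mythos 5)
Your proof is correct. Note that the paper itself does not prove this lemma at all --- it simply cites \cite[Proposition 7]{BPRRW:ZS} --- so there is no in-paper argument to compare against; your direct computation is the standard one and is carried out accurately: the product $(x,s(x))(r(h),h)$ collapses to $(x,h)$ via the unit identities \ref{item:ZS3} and \ref{item:ZS6} applied to units, the product $(r(g),g)(x,s(x))$ collapses to $\Psi(g,x)$ via \ref{item:ZS2} and \ref{item:ZS5}, and the composability needed for the right-hand side of \eqref{eq:2 decompositions in ZS product} is supplied by \ref{item:ZS7}. The only dependence worth flagging is that bijectivity of the second multiplication map is inherited from the cited fact that $\Psi$ is a homeomorphism; if one wanted a fully self-contained argument, an explicit inverse can be written down using \ref{item:ZS12} and \ref{item:ZS13}, but invoking the stated property of $\Psi$ is entirely legitimate here.
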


The groupoids $\cG_i$ embed naturally inside  their \ZS\ product $\cG_{1}\bowtie\cG_{2}$. 
    
\begin{lemma}[{%
cf.\ \cite[Lemma 3.5]{MS:2023:ZS-pp}%
}]\label{lem:clopen in ZS-product if r-discrete}
   Suppose $(\cG_{1}, \cG_{2})$ is a matched pair of groupoids. 
   The maps
    \begin{align}
    \label{eq:incl of G_{i} in G}
        \cG_{1}&\to \cG_{1}\bowtie\cG_{2}
        &&\text{and}&&&\cG_{2}&\to\cG_{1}\bowtie\cG_{2}
        \\
        \notag
        x&\mapsto (x,s(x))
        &&   &&&
        g&\mapsto (r(g),g)
    \end{align}
   are injective, continuous, closed groupoid homomorphisms; in particular, their images are closed subgroupoids of
   $\cG_{1}\bowtie\cG_{2}$. If one 
   of $\cG_{1}$, $\cG_{2}$
   is $r$-discrete, then the 
   image of the 
   other is open in $\cG_{1}\bowtie\cG_{2}$. The \ZS\ product $\cG_{1}\bowtie\cG_{2}$ is $r$-discrete (respectively even \etale) if and only if both 
   $\cG_{1}$ and $\cG_{2}$
   are $r$-discrete (respectively even \etale),
    in which case
    both of
    their images
    are open in $\cG_{1}\bowtie\cG_{2}$.
\end{lemma}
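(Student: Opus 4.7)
Denote the two maps by $\iota_{1}(x) = (x, s(x))$ and $\iota_{2}(g) = (r(g), g)$. My plan is to handle the four claims in order: each $\iota_{j}$ is an injective, continuous, closed groupoid homomorphism with closed subgroupoid image; $r$-discreteness of one factor forces openness of the other's image; and the \ZS\ product is $r$-discrete (respectively étale) iff both factors are.

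First I would verify the homomorphism property by direct computation using~\eqref{eq:EZS gpd:multiplication}. For composable $x, y \in \cG_{1}$ we obtain $\iota_{1}(x) \iota_{1}(y) = (x \cdot (s(x) \HleftG y), (s(x) \HrightG y) \cdot s(y))$, and since $s(x) = r(y)$, axioms~\ref{item:ZS3} and~\ref{item:ZS11} collapse this to $(xy, s(y)) = \iota_{1}(xy)$; the check for $\iota_{2}$ is symmetric, using~\ref{item:ZS10} and~\ref{item:ZS6}. Injectivity and continuity of both maps are manifest from the formulas. For closedness of $\iota_{j}$, each has a continuous left inverse (the coordinate projection), hence is a homeomorphism onto its image. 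The images are $\iota_{1}(\cG_{1}) = \cG_{1} \bfpsr \cU$ and $\iota_{2}(\cG_{2}) = \cU \bfpsr \cG_{2}$, each closed in $\cG_{1} \bfpsr \cG_{2} = \cG_{1}\bowtie\cG_{2}$ since $\cU = \cG_{j}\z$ is closed in the Hausdorff groupoid $\cG_{j}$. So each $\iota_{j}$ is a closed map with closed subgroupoid image.

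For the topological assertions, if $\cG_{1}$ is $r$-discrete then $\cU$ is open in $\cG_{1}$, so $\cU \times \cG_{2}$ is open in $\cG_{1} \times \cG_{2}$ and intersecting with $\cG_{1} \bfpsr \cG_{2}$ yields $\iota_{2}(\cG_{2})$ open; the opposite case is analogous. The unit space of $\cG_{1}\bowtie\cG_{2}$ is the diagonal $\{(u, u) : u \in \cU\}$ in $\cU \times \cU$, naturally identified with $\cU$. If the product is $r$-discrete, continuity of $\iota_{j}$ pulls back the open diagonal to $\cU \subseteq \cG_{j}$, showing each $\cG_{j}$ is $r$-discrete; conversely, if $\cU$ is open in both $\cG_{j}$, then the diagonal is open in the \ZS\ product. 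For the étale case, the forward direction passes through $r$-discreteness: if the product is étale it is $r$-discrete, so both $\cG_{j}$ are $r$-discrete and hence their images are open subgroupoids of the étale \ZS\ product, which makes them étale; transporting back via the homeomorphism $\iota_{j}$ shows $\cG_{j}$ is étale. For the converse, a short calculation (again using~\ref{item:ZS3}, \ref{item:ZS6}, \ref{item:ZS10}, \ref{item:ZS11}) gives $r(y, g) = (r(y), r(y))$ and $s(y, g) = (s(g), s(g))$ in $\cG_{1}\bowtie\cG_{2}$, which under identification of the unit space with $\cU$ are the compositions $r_{\cG_{1}} \circ \pi_{1}$ and $s_{\cG_{2}} \circ \pi_{2}$, where $\pi_{j}$ is the coordinate projection to $\cG_{j}$. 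Each projection is a pullback of a local homeomorphism ($r_{\cG_{2}}$ resp.\ $s_{\cG_{1}}$) under étaleness of the other factor, hence itself a local homeomorphism; composing with the étale maps $r_{\cG_{1}}, s_{\cG_{2}}$ yields local homeomorphisms, so the \ZS\ product is étale.

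The main obstacle is purely organisational: funnelling the forward étale direction through the $r$-discrete bridge, and in the converse extracting the explicit range/source formulas together with the standard fact that fibred products of étale groupoids are étale. No insight beyond the axioms~\ref{item:ZS1}--\ref{item:ZS11} and elementary topology is needed.
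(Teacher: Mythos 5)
Your proof is correct, and in several places it takes a genuinely different (and arguably cleaner) route than the paper's. Where the paper establishes openness of $\iota_{2}(\cG_{2})$ (when $\cG_{1}$ is $r$-discrete) and the forward direction of the $r$-discreteness equivalence by net arguments combined with openness of the unit space, you observe directly that $\iota_{2}(\cG_{2})=(\cU\times\cG_{2})\cap(\cG_{1}\bfpsr\cG_{2})$ is the trace of an open box, and that $\cU=\iota_{j}^{-1}\bigl((\cG_{1}\bowtie\cG_{2})^{(0)}\bigr)$ is the continuous preimage of the (open) diagonal; both are one-line point-set arguments that avoid nets entirely. Your closedness argument via a continuous left inverse is a mild repackaging of the paper's observation that the image of a closed set $X\subseteq\cG_{1}$ is $X\bfpsr\cU$, so nothing is gained or lost there. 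The most substantive divergence is the \etale\ claim: the paper simply cites \cite[Proposition~9]{BPRRW:ZS}, whereas you give a self-contained proof — deducing the forward direction from $r$-discreteness plus the fact that an open subgroupoid of an \etale\ groupoid (with the same unit space) is \etale, and the converse from the range/source formulas $r(y,g)=r_{\cG_{1}}(\pi_{1}(y,g))$, $s(y,g)=s_{\cG_{2}}(\pi_{2}(y,g))$ together with the stability of local homeomorphisms under pullback. This buys a proof that is independent of the cited reference at the cost of invoking (standard but unproved here) facts such as ``\etale\ implies $r$-discrete'' and ``pullbacks of local homeomorphisms are local homeomorphisms''; if you keep this version, it would be worth stating those two facts explicitly or giving a one-line justification for each.
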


\begin{proof}
    It is clear that the maps are injective, continuous, and homomorphic. To see that the maps are closed, note that the image of $X\subseteq \cG_{1}$ is exactly $X\bfpsr\cU$ and that of $Y\subseteq \cG_{2}$ is $\cU\bfpsr Y$. Since $\cU$ is closed in $\cG_{2}$ and $\cG_{1}$, it follows that the homomorphisms map closed sets to closed sets. In particular, the maps are homeomorphisms onto their images.

    Let us prove that $\cG_{1}$ is open if $\cG_{2}$ is $r$-discrete; one proves the other case analogously. Assume that $\{k_\lambda\}$ is a net in $\cG_{1}\bowtie\cG_{2}$ that converges to 
    $(x,s(x))$.
    By continuity of the projection map $p_{2}\colon \cG_{1}\bowtie\cG_{2}\to\cG_{2}$, this implies that $p_{2}(k_\lambda)\to 
    s(x)
    $. Since $\cG_{2}$ is $r$-discrete, there exists $\lambda_0$ such that $p_{2}(k_\lambda)\in\cG_{2}\z$ for all $\lambda\geq\lambda_0$; in particular, $k_\lambda = 
    (x_\lambda, s(x_\lambda))
    $ for some 
    $x_\lambda\in\cG_{1}$.
    
    The claim about \etale ness was already proven in \cite[Proposition~9]{BPRRW:ZS}. For $r$-discreteness, note that $(\cG_{1}\bowtie\cG_{2})^{(0)} = (\cU\times\cU)\cap \cG_{1}\bowtie\cG_{2}$, so if both $\cG_{1}$ and $\cG_{2}$ are $r$-discrete, then so is their \ZS\ product. Conversely, suppose $\cG_{1}\bowtie\cG_{2}$ is $r$-discrete. If 
    $\{x_\lambda\}$
    is a net in, say, $\cG_{1}$ that converges to $u\in \cU$, then 
    $\{(x_\lambda,s(x_\lambda))\}$
    converges to the unit $(u,s(u))=(u,u)$ in $\cG_{1}\bowtie\cG_{2}$. Since $(\cG_{1}\bowtie\cG_{2})^{(0)}$ is open, the net 
    $\{(x_\lambda,s(x_\lambda))\}$
    must then eventually be contained in it, meaning that
    $x_\lambda\in \cU$,
    proving that $\cU$ is open in $\cG_{1}$. The proof for $\cG_{2}$ is identical.
\end{proof}

We therefore may identify $\cG_{i}$ with its image in $\cG_{1}\bowtie\cG_{2}$, and usually, we simply write $x$ and $g$ instead of $(x,s(x))$ and $(r(g),g)$, respectively.

\begin{corollary}\label{cor:ZS and effectiveness}
    Suppose $(\cG_{1}, \cG_{2})$ is a matched pair of groupoids. If  $\cG_{1}\bowtie\cG_{2}$ is effective and $r$-discrete, then so are $\cG_{1}$ and $\cG_{2}$. If $\cG_{1}\bowtie\cG_{2}$ is principal, then so are $\cG_{1}$ and $\cG_{2}$.
\end{corollary}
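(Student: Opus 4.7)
The plan is to pull back both hypotheses through the closed embeddings $\cG_{i}\hookrightarrow \cG_{1}\bowtie\cG_{2}$ from Lemma~\ref{lem:clopen in ZS-product if r-discrete}. The $r$-discreteness of $\cG_{1}$ and $\cG_{2}$ in the first claim is immediate from that lemma, so the real content is the transfer of (i) principality and (ii) effectiveness. The key preliminary observation I would verify is that the range and source of an embedded element $(x,s(x))\in\cG_{1}\bowtie\cG_{2}$ (with $x\in\cG_{1}$) are simply $r(x)$ and $s(x)$, after identifying $(\cG_{1}\bowtie\cG_{2})\z=\{(u,u):u\in\cU\}$ with $\cU$; analogously for the embedding of $\cG_{2}$. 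Consequently, $\Iso(\cG_{i})$ maps into $\Iso(\cG_{1}\bowtie\cG_{2})$, and any embedded element that happens to be a unit of $\cG_{1}\bowtie\cG_{2}$ must in fact lie in $\cU$. Both facts follow directly from the multiplication formula~\eqref{eq:EZS gpd:multiplication} combined with Properties~\ref{item:ZS10}, \ref{item:ZS11}, \ref{item:ZS3}, and~\ref{item:ZS6}.

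Given this, the principal case is immediate: if every element of $\Iso(\cG_{1}\bowtie\cG_{2})$ is a unit, then the embedded image of any element of $\Iso(\cG_{i})$ is a unit of $\cG_{1}\bowtie\cG_{2}$ and therefore lies in $\cU$, so $\cG_{i}$ is principal. For the effective case, I would additionally invoke that $r$-discreteness of $\cG_{1}\bowtie\cG_{2}$ forces each $\cG_{i}$ to be open in $\cG_{1}\bowtie\cG_{2}$ (again by Lemma~\ref{lem:clopen in ZS-product if r-discrete}). The interior of $\Iso(\cG_{i})$, being open in $\cG_{i}$ and hence in $\cG_{1}\bowtie\cG_{2}$, embeds as an open subset of $\Iso(\cG_{1}\bowtie\cG_{2})$; effectiveness of $\cG_{1}\bowtie\cG_{2}$ pushes it into $(\cG_{1}\bowtie\cG_{2})\z$, which by the preliminary observation is contained in $\cU$. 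Hence the interior of $\Iso(\cG_{i})$ coincides with $\cU$, proving effectiveness.

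The main obstacle is really just bookkeeping: verifying the identification of the range and source maps on the embedded copies of $\cG_{i}$ inside the \ZS\ product, and checking that ``being a unit of $\cG_{1}\bowtie\cG_{2}$'' pulls back to ``being a unit of $\cG_{i}$''. Once that is in hand, no further subtle argument is required, and the corollary follows cleanly from Lemma~\ref{lem:clopen in ZS-product if r-discrete} together with the definitions of principal and effective groupoids.
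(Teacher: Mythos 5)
Your proof is correct and follows essentially the same route as the paper, which simply records the two general facts you verify in detail: principality passes to arbitrary subgroupoids, and effectiveness passes to \emph{open} subgroupoids (with openness and $r$-discreteness of the $\cG_{i}$ supplied by Lemma~\ref{lem:clopen in ZS-product if r-discrete}). Your unpacking of the range/source identification and the unit pullback is just the explicit verification of what the paper leaves as standard.
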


\begin{proof}
    Principality is inherited by any subgroupoid, and effectiveness by any \emph{open} subgroupoid. 
\end{proof}

Note that a converse of Corollary~\ref{cor:ZS and effectiveness} is clearly false: if, say, both $\cG_{1}$ and $\cG_{2}$ are the full equivalence relation on a set $X$ with at least two distinct elements $x,y$ (so the groupoids are principal), then the \ZS\ product for the trivial actions, i.e., $\cG_{1}\bfpsr\cG_{2}$, contains $((x,y),(y,x))$, which is a non-unit isotropy point.

\section{The external \ZS\ product of two twists}\label{sec:EZS}

For this section, we assume that $\cG_{1}$ and $\cG_{2}$ are two groupoids with the same unit space $\cU$ and that, over each of these groupoids, we have a twist
	\[
	\mathbb{T}\times \cU
	\overset{\jmath_{i}}{\to} \Sigma_{i}
	\overset{\pi_{i}}{\to}
	\cG_{i}.
	\]
	We identify $\Sigma_{i}\z$ with $\cU$ via $\pi_{i}$.
	
We will prove that certain conditions on the pair $(\Sigma_1,\Sigma_2)$ of twists  imply that $(\cG_{1},\cG_{2})$ is a matched pair of groupoids as in Section~\ref{ssec:ZS of gpds} and that allow us to define a new twist $\Sigma_{1}\ZST\Sigma_2$ over the \ZS\ product groupoid $\cG_{1}\bowtie\cG_{2}$.
Notice that each $\Sigma_i$ carries a copy of $\mathbb{T}$, so their product $\Sigma_{i}\bfpsr \Sigma_{j}$ has one too many copies of $\mathbb{T}$ in order
to be a twist over $\cG_{1}\bowtie\cG_{2}$. We thus first need to quotient out a copy of $\mathbb{T}$ from $\Sigma_{i}\bfpsr \Sigma_{j}$: We define a $\mathbb{T}$-action on $\Sigma_{i}\bfpsr \Sigma_{j}$ by $z\cdot (e,f)\coloneq  (z\cdot e, \overline{z}\cdot f)$, and we denote the resulting quotient space by $\Sigma_{i}\ast_{\mathbb{T}}\Sigma_{j}$ whose elements are written as $e\astij f$.

A very convenient tool in the proofs to come is the following:
\begin{lemma}\label{lem:external:top on quotient}
    For $i,j\in\{1,2\}$ with $i\neq j$, the quotient map $q_{i,j}\colon \Sigma_{i}\bfpsr \Sigma_{j}\to \Sigma_{i}\ast_{\mathbb{T}}\Sigma_{j}$ is open and the space $\Sigma_{i}\ast_{\mathbb{T}}\Sigma_{j}$ is locally compact Hausdorff with a $\mathbb{T}$-action given by $z\cdot (e\astij f)\coloneq  (z\cdot e)\astij f$.
\end{lemma}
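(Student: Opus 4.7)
The key observation is that the equivalence relation on $\Sigma_{i}\bfpsr\Sigma_{j}$ is the orbit equivalence relation for the continuous action of the compact abelian group $\mathbb{T}$ given by $z\cdot(e,f)=(z\cdot e,\bar z\cdot f)$. All three properties (openness of the quotient map, local compactness, and Hausdorffness) will then follow from standard facts about continuous actions of compact groups on LCH spaces, which I would cite or quickly reprove. The well-definedness and continuity of the induced $\mathbb{T}$-action on the quotient are essentially formal, using centrality of $\mathbb{T}$ in $\Sigma_{i}$.

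\textbf{Step 1: Openness.} Given $U$ open in $\Sigma_{i}\bfpsr\Sigma_{j}$, I would observe
\[
q_{i,j}^{-1}(q_{i,j}(U)) \;=\; \bigcup_{z\in\mathbb{T}} z\cdot U,
\]
which is a union of open sets since each $z\in\mathbb{T}$ acts as a homeomorphism on $\Sigma_{i}\bfpsr\Sigma_{j}$ (by continuity of the $\mathbb{T}$-action on each $\Sigma_{k}$, part of the twist data). Hence $q_{i,j}(U)$ is open, so $q_{i,j}$ is open.

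\textbf{Step 2: Hausdorffness and local compactness.} The fibred product $\Sigma_{i}\bfpsr\Sigma_{j}$ is LCH (closed subspace of a product of LCH spaces). Because $\mathbb{T}$ is compact and the action is continuous on a Hausdorff space, the orbit map $\mathbb{T}\times(\Sigma_{i}\bfpsr\Sigma_{j})\to(\Sigma_{i}\bfpsr\Sigma_{j})\times(\Sigma_{i}\bfpsr\Sigma_{j})$, $(z,(e,f))\mapsto((e,f),z\cdot(e,f))$, is proper; its image is the graph of the orbit relation and is therefore closed. It is a standard fact that a quotient by a closed equivalence relation with open quotient map is Hausdorff, yielding that $\Sigma_{i}\ast_{\mathbb{T}}\Sigma_{j}$ is Hausdorff. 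Local compactness then follows because open continuous surjective images of LC spaces are LC (each point in the quotient has an open neighbourhood that is the image of a relatively compact open set in the domain).

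\textbf{Step 3: The $\mathbb{T}$-action on the quotient.} For well-definedness, suppose $(e,f)$ and $(w\cdot e,\bar w\cdot f)$ represent the same class. Using that the $\mathbb{T}$-action on $\Sigma_{i}$ is central (condition \ref{it:twist:T central}), in particular commutes with itself, we get $z\cdot(w\cdot e)=w\cdot(z\cdot e)$, so $(z\cdot e,f)$ and $(z\cdot(w\cdot e),\bar w\cdot f)=(w\cdot(z\cdot e),\bar w\cdot f)$ lie in the same $\mathbb{T}$-orbit; hence $(z\cdot e)\astij f=(z\cdot(w\cdot e))\astij(\bar w\cdot f)$. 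For continuity, the composite
\[
\mathbb{T}\times(\Sigma_{i}\bfpsr\Sigma_{j})\;\xrightarrow{\;(z,(e,f))\mapsto(z\cdot e,f)\;}\;\Sigma_{i}\bfpsr\Sigma_{j}\;\xrightarrow{\;q_{i,j}\;}\;\Sigma_{i}\ast_{\mathbb{T}}\Sigma_{j}
\]
is continuous and factors through $\mathrm{id}_{\mathbb{T}}\times q_{i,j}$. Since $q_{i,j}$ is an open surjection (Step 1), so is $\mathrm{id}_{\mathbb{T}}\times q_{i,j}$, and hence it is a quotient map; the universal property then delivers continuity of the induced action on $\Sigma_{i}\ast_{\mathbb{T}}\Sigma_{j}$.

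The main obstacle is really only Step 2, the Hausdorffness of the quotient; once the openness of $q_{i,j}$ is in hand, the remaining items are formal from the general theory of continuous compact-group actions on LCH spaces.
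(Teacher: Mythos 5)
Your proof is correct and follows essentially the same route as the paper: the openness argument via $q_{i,j}^{-1}(q_{i,j}(U))=\bigcup_{z}z\cdot U$ is identical, and your continuity argument for the induced $\mathbb{T}$-action matches the paper's. The only difference is that where the paper simply cites a standard fact (quotients of LCH spaces by compact group actions are LCH), you spell out its proof via the closed orbit relation; that is a valid expansion, not a different approach.
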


\begin{proof}
    A basic open set of $\Sigma_{i}\bfpsr \Sigma_{j}$ is of the form 
    \[
        U \bfpsr  V 
        = \{(e,f)\in \Sigma_{i}\bfpsr \Sigma_{j}: e\in  U , f\in  V \},
    \]
    for open sets $ U \subseteq \Sigma_{i}, V\subseteq \Sigma_{j}$. We have
    \begin{align*}
        q_{i,j}\inv(q_{i,j}( U \bfpsr  V ))
        &=
        \{(z\cdot e,\overline{z}\cdot f)\in \Sigma_{i}\bfpsr \Sigma_{j}: e\in  U , f\in  V ,z\in\mathbb{T}\}
        \\
        &=
        \cup_{z\in\mathbb{T}}
        (z\cdot  U) \bfpsr  (\overline{z}\cdot  V ).
    \end{align*}
    Since $\mathbb{T}$ acts by homeomorphisms on $\Sigma_{i}$ and $\Sigma_{j}$, each of the sets $z\cdot  U$ and $\overline{z}\cdot  V $ is open, proving that $q_{i,j}\inv(q_{i,j}( U \bfpsr  V ))$ is the union of basic open sets and hence itself open.
    Since $\Sigma_{i}\ast_{\mathbb{T}}\Sigma_{j}$ carries the quotient space topology induced by the quotient map $q_{i,j}$, this proves that $q_{i,j}( U \bfpsr  V )$ is open in $\Sigma_{i}\ast_{\mathbb{T}}\Sigma_{j}$. As $U,V$ were arbitrary, this proves that $q_{i,j}$ is open.

   {%
   Since $\mathbb{T}$ is compact and $\Sigma_{i}\bfpsr \Sigma_{j}$ is \LCH, the quotient space is \LCH\ 
   (\cite[Chapter 4, \S 31, Ex. 8]{Munkres}).
   }%

   Since $\mathbb{T}$-acts continuously on $\Sigma_{1}$ and since the quotient map is continuous and open, it is immediate that $\mathbb{T}$ acts continuously on $\Sigma_{1}\ast_{\mathbb{T}}\Sigma_{2}$. 
\end{proof}

Next, we show that the twists $\Sigma_{1}$ and $\Sigma_{2}$ that we started with naturally embed into the quotient space $\Sigma_{1}\ast_{\mathbb{T}}\Sigma_{2}$, so that we can think of them as subspaces.

\begin{lemma}\label{lem:iotas}
    The maps
\begin{align*}
    \iota^{i}_{1,2}\colon \Sigma_{i} &\to \Sigma_{1}\ast_{\mathbb{T}}\Sigma_{2}
    &&\text{ and }&&&
    \iota^{i}_{2,1}\colon \Sigma_{i} &\to \Sigma_{2}\ast_{\mathbb{T}}\Sigma_{1}
\intertext{given by}
    \iota^{1}_{1,2} (e )&=
    e \astonetwo s(e )
    &&
    &&&
    \iota^{1}_{2,1} (e )&=
    r(e )\asttwoone e 
    \\
    \iota^{2}_{1,2} (f )&=
    r(f )\astonetwo f 
    &&
    &&& 
    \iota^{2}_{2,1} (f )&=
    f \asttwoone s(f )
\end{align*}
are 
embeddings whose images are closed.
\end{lemma}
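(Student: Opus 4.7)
The plan is to verify four properties for each of the four maps: continuity, injectivity, being a topological embedding, and closedness of the image. By swapping the roles of $\Sigma_{1}$ and $\Sigma_{2}$, and swapping source with range, the four cases reduce to one, so I focus on $\iota^{1}_{1,2}$.

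Continuity and injectivity are quick. Writing $\iota^{1}_{1,2} = q_{1,2}\circ \sigma$ where $\sigma(e) = (e, s(e))$ gives continuity since $s$ and $q_{1,2}$ are continuous. For injectivity, if $\iota^{1}_{1,2}(e) = \iota^{1}_{1,2}(e')$, then there is $z\in\mathbb{T}$ with $(e', s(e')) = (z\cdot e,\, \overline{z}\cdot s(e))$ in $\Sigma_{1}\bfpsr\Sigma_{2}$. The second-coordinate equation reads $\jmath_{2}(1, s(e')) = \jmath_{2}(\overline{z}, s(e))$, which, by the homeomorphism property \ref{it:twist:jmath} of $\jmath_{2}$, forces $z = 1$ and $s(e) = s(e')$; hence $e = e'$.

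For closedness of the image I would compute the saturation $q_{1,2}\inv(\iota^{1}_{1,2}(\Sigma_{1}))$ directly. An element $e\astonetwo f$ lies in $\iota^{1}_{1,2}(\Sigma_{1})$ exactly when there exists $z\in\mathbb{T}$ with $\overline{z}\cdot f \in \cU$, which is equivalent to $f \in \pi_{2}\inv(\cU)$. Hence
\[
q_{1,2}\inv(\iota^{1}_{1,2}(\Sigma_{1})) = (\Sigma_{1}\bfpsr\Sigma_{2}) \cap (\Sigma_{1}\times \pi_{2}\inv(\cU)).
\]
Since $\cU$ is closed in any Hausdorff groupoid and $\pi_{2}$ is continuous, this saturation is closed in $\Sigma_{1}\bfpsr\Sigma_{2}$, and therefore $\iota^{1}_{1,2}(\Sigma_{1})$ is closed by the definition of the quotient topology.

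The last, and most subtle, step is showing that $\iota^{1}_{1,2}$ is a topological embedding. My strategy is to build a continuous left inverse. By \ref{it:twist:jmath}, $\jmath_{2}$ restricts to a homeomorphism $\mathbb{T}\times\cU \to \pi_{2}\inv(\cU)$, so sending $f\in\pi_{2}\inv(\cU)$ to its unique $\mathbb{T}$-component $z_{f}$ (defined by $f=\jmath_{2}(z_{f},\pi_{2}(f))$) gives a continuous map $\pi_{2}\inv(\cU)\to\mathbb{T}$. Setting $\rho(e,f) = z_{f}\cdot e$ then yields a continuous map $\rho\colon q_{1,2}\inv(\iota^{1}_{1,2}(\Sigma_{1}))\to \Sigma_{1}$. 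The key check is $\mathbb{T}$-invariance: from $z_{\overline{w}\cdot f} = \overline{w}\,z_{f}$ one gets
\[
\rho(w\cdot e,\, \overline{w}\cdot f) = (\overline{w}\,z_{f})\cdot (w\cdot e) = z_{f}\cdot e = \rho(e,f).
\]
Since $q_{1,2}$ is continuous, open, and surjective (Lemma \ref{lem:external:top on quotient}), its restriction to $q_{1,2}\inv(\iota^{1}_{1,2}(\Sigma_{1}))$ is a quotient map onto $\iota^{1}_{1,2}(\Sigma_{1})$, so $\rho$ descends to a continuous map $\overline{\rho}\colon \iota^{1}_{1,2}(\Sigma_{1})\to\Sigma_{1}$; and $\overline{\rho}\circ\iota^{1}_{1,2} = \mathrm{id}_{\Sigma_{1}}$ because $z_{s(e)} = 1$. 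The main obstacle I anticipate is precisely this $\mathbb{T}$-invariance bookkeeping: it is the step where twist axiom \ref{it:twist:jmath} really earns its keep.
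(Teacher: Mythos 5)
Your proof is correct, and for the two substantive parts (closedness of the image and the embedding property) it takes a genuinely different route from the paper. The paper handles both by net arguments: it lifts a convergent net through the open quotient map, extracts a convergent subnet of the scalars $z_\lambda$ by compactness of $\mathbb{T}$, and identifies the limit; openness onto the image is then obtained by rerunning the same argument and invoking Fell's criterion. You instead compute the saturation exactly, $q_{1,2}\inv\bigl(\iota^{1}_{1,2}(\Sigma_{1})\bigr)=(\Sigma_{1}\bfpsr\Sigma_{2})\cap(\Sigma_{1}\times\pi_{2}\inv(\cU))$, which is visibly closed, and you produce an explicit continuous retraction $\overline{\rho}(e\astonetwo f)=z_{f}\cdot e$ via the homeomorphism in~\ref{it:twist:jmath}; a continuous left inverse to a continuous injection immediately gives an embedding. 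Your version buys a cleaner, more structural argument with no subnet bookkeeping (and it exhibits the inverse map explicitly, which is occasionally useful downstream); the paper's version is more uniform with the net-and-Fell's-criterion style used throughout the rest of the article. The one point worth stating explicitly, which you use implicitly, is that the restriction of the open quotient map $q_{1,2}$ to the saturated set $q_{1,2}\inv(\iota^{1}_{1,2}(\Sigma_{1}))$ is again an open (hence quotient) map onto the image, so that the $\mathbb{T}$-invariant map $\rho$ really does descend; this is standard and your $\mathbb{T}$-invariance computation is exactly the needed hypothesis.
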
 

\begin{proof}
We will do the proof for $\iota^{1}_{1,2}$ only.
If $\iota^{1}_{1,2}(e' )= \iota^{1}_{1,2} (e )$, then there exists $z\in\mathbb{T}$ such that
\[
    (e' , s(e' ))
    = (z\cdot e , \overline{z}\cdot s(e )).
\]
But $e' =z\cdot e $ implies $s(e' )=s(z\cdot e )=
s(e)
$, so the fact that we also have $s(e' )=\overline{z}\cdot s(e )$ implies $z=1$ and hence $e' =e $, which proves that $\iota^{1}_{1,2}$ is injective.
Continuity is obvious, seeing that the map is a concatenation of continuous maps.

To see that the image is closed, assume that a net $\{\iota^{1}_{1,2}(e_\lambda)\}$ converges in $\Sigma_{1}\ast_{\mathbb{T}}\Sigma_{2}$ to, say, $q_{1,2}(e,f)=e\astonetwo f$. Since the quotient map $q_{1,2}$ is open (Lemma~\ref{lem:external:top on quotient}) and since we can without loss of generality pass to a subnet, there exists a net $\{(e_\lambda',f_\lambda)\}$ in $\Sigma_{1}\bfpsr \Sigma_{2}$ such that $e_\lambda' \astonetwo f_\lambda = \iota^{1}_{1,2}(e_\lambda)$, $e_\lambda'\to e$, and $f_\lambda\to f$. From $e_\lambda' \astonetwo f_\lambda = \iota^{1}_{1,2}(e_\lambda)$ we conclude that there exists $z_\lambda\in\mathbb{T}$ such that $(z_\lambda\cdot e_\lambda',\overline{z_\lambda}\cdot f_\lambda)=( e_\lambda,s(e_\lambda))$
in $\Sigma_{1}\bfpsr\Sigma_{2}$.
Since $\mathbb{T}$ is compact and since we can pass to a subnet, we can without loss of generality assume that $\{z_\lambda\}$ converges to some $z\in\mathbb{T}$. Since $e_\lambda'\to e$, this means that 
\[
f_\lambda
= z_\lambda \cdot s(e_\lambda)
= z_\lambda \cdot s(z_\lambda \cdot e_\lambda')\to z\cdot s(z \cdot e) =  z\cdot s(e).
\]
Since limits are unique and $f_\lambda\to f$, we conclude that $f=z\cdot s(e)$, and so
\[
    e\astonetwo f 
    =
    (z\cdot e)\astonetwo (\overline{z}\cdot f)
    =
    (z\cdot e)\astonetwo s(e)
    =
    \iota^{1}_{1,2}(z\cdot e)
    \in \iota^{1}_{1,2}(\Sigma_{1}).
\]
We have shown that the limit $e\astonetwo f $ of the net $\{\iota^{1}_{1,2}(e_\lambda)\}$ lies in $\iota^{1}_{1,2}(\Sigma_{1})$, i.e., the image of $\iota^{1}_{1,2}$ is closed.

To see that $\iota^{1}_{1,2}$ is open onto its image, we can adjust the above proof:  this time, the limit of $\{\iota^{1}_{1,2}(e_\lambda)\}$ is allowed to be chosen of the form $e\astonetwo s(e)$, i.e., the above scalar $z$ can be set to $1$, so that  $e_\lambda = z_\lambda \cdot e_\lambda' \to z\cdot e = e$, proving that $\iota^{1}_{1,2}$ is open onto its image by Fell's criterion \cite[Proposition 1.1]{Wil2019}. 
\end{proof}

If $(\cG_{1},\cG_{2})$ is a matched pair, so that the set $\cG_{1}\bfpsr\cG_{2}$ can be regarded as the \ZS\ product groupoid $\cG_{1}\bowtie\cG_{2}$, then in order for $\Sigma_{1}\ast_{\mathbb{T}}\Sigma_{2}$ to have any hopes of becoming a twist over 
$\cG_{1}\bowtie\cG_{2}$, we need there to be a projection map. That is the content of the following lemma.
\begin{lemma}\label{lem:Pi12 and its counterpart}
    The maps
    \begin{align*}
    	\Pi_{1,2}\colon \Sigma_{1}\ast_{\mathbb{T}}\Sigma_{2} &\to \cG_{1}\bfpsr \cG_{2} &\text{and}&&
     \Sigma_{2}\ast_{\mathbb{T}}\Sigma_{1} &\to \cG_{2}\bfpsr \cG_{1}
    	\\
    	e \astonetwo f &\mapsto \bigl(\pi_{1}(e ),\pi_{2}(f )\bigr)
    	&&&f \asttwoone e &\mapsto \bigl(\pi_{2}(f ),\pi_{1}(e )\bigr),
    \end{align*}
   	are well-defined, continuous,
    open
    surjections.
\end{lemma}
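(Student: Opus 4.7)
The plan is to treat both maps uniformly by symmetry, focusing on $\Pi_{1,2}$, since the argument for its counterpart is identical after swapping the roles of the indices. I will verify the four properties in the natural order: well-definedness (including that the image lies in $\cG_{1}\bfpsr\cG_{2}$), continuity, surjectivity, and openness.

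For well-definedness, suppose $e\astonetwo f = e'\astonetwo f'$. Then there exists $z\in\mathbb{T}$ with $e' = z\cdot e$ and $f' = \overline{z}\cdot f$. By the definition of $z\cdot e = \jmath_{1}(z,\pi_{1}(r(e)))\,e$ in \ref{it:twist:T central} and the fact that $\jmath_{1}$ maps into $\pi_{1}\inv(\cU)$ by \ref{it:twist:jmath}, applying the groupoid homomorphism $\pi_{1}$ yields $\pi_{1}(e') = \pi_{1}(e)$, and analogously $\pi_{2}(f')=\pi_{2}(f)$, so the formula does not depend on the representative. The output lies in $\cG_{1}\bfpsr\cG_{2}$ because $(e,f)\in\Sigma_{1}\bfpsr\Sigma_{2}$ forces $s(e) = r(f)$ in $\Sigma_{i}\z\cong\cU$, and under the identification of $\Sigma_{i}\z$ with $\cU$ via $\pi_{i}$, this gives $s(\pi_{1}(e)) = r(\pi_{2}(f))$.

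Continuity follows from the universal property of the quotient topology: the composite
\[
\Pi_{1,2}\circ q_{1,2}\colon \Sigma_{1}\bfpsr \Sigma_{2}\to \cG_{1}\bfpsr \cG_{2},\qquad (e,f)\mapsto (\pi_{1}(e),\pi_{2}(f)),
\]
is clearly continuous since $\pi_{1}$ and $\pi_{2}$ are, and $q_{1,2}$ is a quotient map. For surjectivity, given $(x,g)\in\cG_{1}\bfpsr\cG_{2}$, use that each $\pi_{i}$ is surjective by \ref{it:twist:pi} to pick $e\in\pi_{1}\inv(x)$ and $f_{0}\in\pi_{2}\inv(g)$. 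Then $\pi_{1}(s(e)) = s(x) = r(g) = \pi_{2}(r(f_{0}))$ as elements of $\cU$, so $s(e) = r(f_{0})$ in $\cU$, giving $(e,f_{0})\in \Sigma_{1}\bfpsr \Sigma_{2}$ with $\Pi_{1,2}(e\astonetwo f_{0}) = (x,g)$.

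For openness, I will show that $\Pi_{1,2}\circ q_{1,2}$ is an open map and conclude using openness of $q_{1,2}$ from Lemma~\ref{lem:external:top on quotient}. A basic open set in $\Sigma_{1}\bfpsr\Sigma_{2}$ has the form $W = (A\times B)\cap(\Sigma_{1}\bfpsr\Sigma_{2})$ with $A\subseteq\Sigma_{1}$, $B\subseteq\Sigma_{2}$ open. The argument used for surjectivity shows
\[
(\Pi_{1,2}\circ q_{1,2})(W)=(\pi_{1}(A)\times\pi_{2}(B))\cap(\cG_{1}\bfpsr\cG_{2}),
\]
which is open in $\cG_{1}\bfpsr\cG_{2}$ since each $\pi_{i}$ is open. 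If $U\subseteq\Sigma_{1}\ast_{\mathbb{T}}\Sigma_{2}$ is open, then $q_{1,2}\inv(U)$ is open and $\Pi_{1,2}(U) = (\Pi_{1,2}\circ q_{1,2})(q_{1,2}\inv(U))$ is open. There is no real obstacle here; the only subtle point is the description of the image of a basic open set, which relies on the identification $\Sigma_{i}\z\cong\cU$ to lift the matching condition $s(x)=r(g)$ back to $s(e) = r(f)$.
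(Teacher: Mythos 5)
Your proof is correct and follows essentially the same structure as the paper's: well-definedness from $\mathbb{T}$-invariance of the $\pi_{i}$, surjectivity from surjectivity of the $\pi_{i}$, and continuity/openness routed through the quotient map $q_{1,2}$ and Lemma~\ref{lem:external:top on quotient}. The only real difference is the continuity step, where the paper invokes Fell's criterion (net-lifting along the open quotient map) while you use the universal property of the quotient topology -- a slightly more elementary route to the same conclusion -- and your explicit identification of the image of a basic open set as $(\pi_{1}(A)\times\pi_{2}(B))\cap(\cG_{1}\bfpsr\cG_{2})$ just spells out a detail the paper leaves implicit.
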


\begin{proof}
	We will do the proof for $\Pi_{1,2}$; by symmetry, the claim for the other map will then follow.
	
Since $\pi_{1},\pi_{2}$ are $\mathbb{T}$-invariant
and since $s(e)=r(f)$, the map $\Pi_{1,2}$ is
 well-defined.
Surjectivity follows from surjectivity of $\pi_{1}$ and $\pi_{2}$.

{%
Fell's criterion \cite[Proposition 1.1]{Wil2019} can be used to show that the map $\Pi_{1,2}$ is continuous, using that the quotient map is open by Lemma~\ref{lem:external:top on quotient} and that the maps $\pi_{1},\pi_{2}$ are continuous. Likewise, $\Pi_{1,2}$ is open since the quotient map is continuous and the maps $\pi_{1},\pi_{2}$ are open. 
}%
\end{proof}

Note that each $\Sigma_{i}$ acts continuously on itself by multiplication on both the left and right. Consequently, $\Sigma_{1}$ acts continuously on the left of
the space
$\Sigma_{1}\bfpsr \Sigma_{2}$ and $\Sigma_{2}$ acts on the right. These actions factor through the quotient:
\begin{lemma}\label{lem:anchor}
    The maps $\rho,\sigma\colon \Sigma_{1}\ast_{\mathbb{T}}\Sigma_{2}\to \cU$  given by
    \[
        \rho(e \astonetwo f )=r(e )
        \quad\text{ and }\quad
        \sigma(e \astonetwo f )=s(f )
    \]
    are  continuous surjections; if the groupoids $\cG_{i}$ are
    \etale,
    then these maps are open. On $\Sigma_{1}\ast_{\mathbb{T}}\Sigma_{2}$, we define a left $\Sigma_{1}$-action with anchor map $\rho$ and a right $\Sigma_{2}$-action with anchor map $\sigma$ by 
    \[
    e'
    \bullet (e\astonetwo f) = (
    e'
    e) \astonetwo f
    \quad\text{respectively}\quad
    (e\astonetwo f) \bullet 
    f'
     = e \astonetwo (f
    f'
     ).\]
This makes $\Sigma_{1}\ast_{\mathbb{T}}\Sigma_{2}$ a left $\Sigma_{1}$- and right $\Sigma_{2}$-space.
\end{lemma}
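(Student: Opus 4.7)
The plan is to verify each claim by pulling back computations to the space $\Sigma_{1}\bfpsr\Sigma_{2}$ via the quotient map $q_{1,2}$, which by Lemma~\ref{lem:external:top on quotient} is a continuous, open surjection. Well-definedness of $\rho$ reduces to $\mathbb{T}$-invariance: since $r(z\cdot e)=r(e)$ (an immediate consequence of \ref{it:twist:T central}), the formula $e\astonetwo f \mapsto r(e)$ does not depend on the representative. Continuity then follows from the fact that $\rho\circ q_{1,2}(e,f) = r_{\cG_{1}}(\pi_{1}(e))$ is continuous together with $q_{1,2}$ being an open quotient. Surjectivity is immediate from $\rho(u\astonetwo u)=u$ for each $u\in\cU$, after identifying $\cU$ with $\Sigma_{i}\z$. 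The analogous argument handles $\sigma$.

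For openness when the $\cG_{i}$ are \etale, I would use the standard fact that for any continuous, open surjection $q$, a map $\rho$ is open if and only if $\rho\circ q$ is open. Here $\rho\circ q_{1,2}=r_{\Sigma_{1}}\circ p_{1}$, where $p_{1}\colon \Sigma_{1}\bfpsr\Sigma_{2}\to \Sigma_{1}$ is the first projection. The projection $p_{1}$ is open because it is the base change of the open map $r_{\Sigma_{2}}=r_{\cG_{2}}\circ\pi_{2}$ (open since $\pi_{2}$ is open by \ref{it:twist:pi} and $\cG_{2}$ is \etale); similarly $r_{\Sigma_{1}}=r_{\cG_{1}}\circ\pi_{1}$ is open. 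Hence $\rho$ is open, and $\sigma$ is handled symmetrically.

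To verify the left $\Sigma_{1}$-action, one checks composability and $\mathbb{T}$-invariance of the lifted formula $(e',(e,f))\mapsto (e'e,f)$. The former is exactly the matching condition $s(e')=r(e)=\rho(e\astonetwo f)$; the latter follows from \ref{it:twist:T central}, which yields $e'(z\cdot e)=z\cdot(e'e)$, so that $(e'(z\cdot e))\astonetwo(\overline{z}\cdot f)=(e'e)\astonetwo f$. Continuity of the action descends from continuity of the multiplication in $\Sigma_{1}$ through the open quotient $q_{1,2}$: given a convergent net in the domain, one lifts to $\Sigma_{1}\bfpsr\Sigma_{2}$ along a subnet using openness of $q_{1,2}$, applies continuity of multiplication in $\Sigma_{1}$, and descends again. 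Associativity and unitality of the action are inherited directly from the groupoid operations in $\Sigma_{1}$; indeed $(e''e')\bullet(e\astonetwo f)=((e''e')e)\astonetwo f=(e''(e'e))\astonetwo f=e''\bullet(e'\bullet(e\astonetwo f))$, and $r(e)\bullet(e\astonetwo f)=(r(e)e)\astonetwo f = e\astonetwo f$. The right $\Sigma_{2}$-action is entirely symmetric.

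The main obstacle is the openness claim in the \etale\ case, which rests on the standard but nontrivial pullback fact that projections from a fibred product along an open map are open, combined with the equivalence between openness of $\rho$ and of $\rho\circ q_{1,2}$. Everything else reduces mechanically to the open-quotient property of $q_{1,2}$ and the centrality \ref{it:twist:T central} of the $\mathbb{T}$-action in each twist.
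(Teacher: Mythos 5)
Your proposal is correct and follows essentially the same route as the paper: everything is pulled back to $\Sigma_{1}\bfpsr\Sigma_{2}$ via the open quotient $q_{1,2}$ of Lemma~\ref{lem:external:top on quotient}, with well-definedness coming from $\mathbb{T}$-centrality, surjectivity from $u\astonetwo u$, and continuity of the action descending through the open quotient. The only cosmetic difference is in the openness step for \etale\ $\cG_{i}$: the paper invokes Fell's criterion \cite[Proposition 1.1]{Wil2019} on nets, whereas you use the equivalent abstract facts that $\rho$ is open when $\rho\circ q_{1,2}$ is, and that the projection from a fibred product along the open map $r_{\Sigma_{2}}$ is open; both arguments rest on exactly the same ingredients (openness of $\pi_{i}$ and of the range maps), so this is the same proof in different packaging.
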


Similarly, $\Sigma_{2}\ast_{\mathbb{T}}\Sigma_{1}$ becomes a left $\Sigma_{2}$- and right $\Sigma_{1}$-space.

\begin{proof}
    We will do the proof for the left action; the proof for the right action is analogous. 
    Since the $\mathbb{T}$-actions on $\Sigma_{1},\Sigma_{2}$ leave  range and source invariant, $\rho$ is well defined. It is surjective as $\rho(u\astonetwo u)=u$ for any $u\in \cU$. Since the quotient map $\Sigma_{1}\bfpsr\Sigma_{2}\to\Sigma_{1}\ast_{\mathbb{T}}\Sigma_2$ is open and the map $\Sigma_{1}\bfpsr\Sigma_{2} \to \cU, (e,f)\mapsto r(e)$, is continuous, the map $\rho$ is  likewise continuous.
    If $\cG_{1}$ and $\cG_{2}$ are \etale, then their range maps are open; since $\pi_{1},\pi_{2}$ are open, an easy application Fell's criterion \cite[Proposition 1.1]{Wil2019} then proves that $\rho$ is open.

    It is clear that $\Sigma_{1}\ast_{\mathbb{T}}\Sigma_{2}$ is algebraically a left $\Sigma_{1}$-space in the sense of \cite[Definition 2.1]{Wil2019}. The action is continuous because the action of $\Sigma_{1}$ on $\Sigma_{1}\bfpsr \Sigma_{2}$ is continuous and the quotient map $\Sigma_{1}\bfpsr \Sigma_{2}\to \Sigma_{1}\ast_{\mathbb{T}}\Sigma_{2}$ is open by Lemma~\ref{lem:external:top on quotient}.
\end{proof}

%

We are now ready to make a key definition: if the pair $(\Sigma_1,\Sigma_2)$ of twists allows a \emph{\wordforPhi} as in Definition~\ref{def:MT}, then the pair $(\cG_{1},\cG_{2})$ of groupoids is matched in the sense of Section~\ref{ssec:ZS of gpds} (Lemma~\ref{lem:defn of Psi from Phi}). Moreover, we can equip $\Sigma_{i}\ast_{\mathbb{T}}\Sigma_{j}$ with the structure of a groupoid (Proposition~\ref{prop.external.twist}) with respect to which it is a twist over the \ZS\ product groupoid $\cG_{1}\bowtie\cG_{2}$ (Theorem~\ref{thm:external ZS-product}).

\begin{definition}
\label{def:MT}
        A \emph{\wordforPhi} for the pair $(\Sigma_1,\Sigma_2)$ of twists is
    a homeomorphism
    \begin{equation*}
    \Phi\colon \Sigma_{2}\ast_{\mathbb{T}}\Sigma_{1}\to \Sigma_{1}\ast_{\mathbb{T}}\Sigma_{2}
    \end{equation*}
    satisfying the following, where we write the arbitrary element  $\Phi(f\asttwoone e)$ as $e'\astonetwo f'$.

     \begin{enumerate}[label=\textup{(MT\arabic*)}, start=0, series=MTlist] 
         \item\label{it:MT:T equivariant}         
         $\Phi$ is $\mathbb{T}$-equivariant:
         {%
         $\Phi((z\cdot e)\astonetwo f)= (z\cdot e')\astonetwo f'$ for all $z\in\mathbb{T}$.  
         }%
         \item\label{it:MT:source and range}
         	We have $r(f)=r(e')$ and $s(e)=s(f')$.
         \item\label{it:MT:diagram,top}
         We have $\Phi(r(e)\asttwoone e)=e\astonetwo s(e)$ for all $e\in\Sigma_{1}$ and  $\Phi(f\asttwoone s(f))=r(f)\astonetwo f$ for all $f\in \Sigma_{2}$. In other words,
         this diagram commutes
         for $i=1,2$:
         \[
         \begin{tikzcd}
             &\Sigma_{i}
             \ar[dl,"\iota^{i}_{2,1}"']
             \ar[dr,"\iota^{i}_{1,2}"]&\\
             \Sigma_{2}\ast_{\mathbb{T}}\Sigma_{1}
             \ar[rr,"\Phi"]&&\Sigma_{1}\ast_{\mathbb{T}}\Sigma_{2}
         \end{tikzcd}
         \]       
         \item\label{it:MT:associativity, right} 
         For $\tilde{e}\in s(e)\Sigma_1$, we have
         \[
         \Phi\bigl(f\asttwoone (e\tilde{e})\bigr) = e'\bullet \Phi(f'\asttwoone \tilde{e})
         .\]
         \item\label{it:MT:associativity, left} 
         For $\tilde{f}\in\Sigma_2 r(f)$, we have
         \[
         	\Phi\bigl( (\tilde{f}f)\asttwoone e\bigr) = \Phi (\tilde{f}\asttwoone e')\bullet f'
         .\]
     \end{enumerate}
     {%
     If such a \wordforPhi\ exists, we will call $(\Sigma_1,\Sigma_2)$ a \emph{pair of matched twists}.
     }%
\end{definition}

\begin{remark}\label{rmk:Phi for units}
    \begin{enumerate}[label=\textup{(\roman*)}]
        \item  One can verify that, courtesy of Condition~\ref{it:MT:T equivariant}, Conditions~\ref{it:MT:associativity, right} and~\ref{it:MT:associativity, left} are well defined.
        \item Condition~\ref{it:MT:diagram,top} in particular implies for $u\in\cU=\cG_{i}\z$ that $\Phi(u\asttwoone u)=u\astonetwo u$.
        \item 
        In 
        contrast
        to the factorisation rule in \cite[Definition 3.3]{MS:2023:ZS-pp}, our map is defined on the quotient space $\Sigma_{2}\ast_{\mathbb{T}}\Sigma_{1}$ instead of the fibred product
        $\Sigma_{1}\bfpsr\Sigma_{2}$.
        \item
        We will see that twist \wordforPhi s are not unique; see Remark~\ref{rmk:Phi never unique}.
    \end{enumerate}        
\end{remark}

\begin{lemma}\label{lem:defn of Psi from Phi}
	If $\Phi$ is a twist~\wordforPhi\ for $(\Sigma_1,\Sigma_2)$, then there is a unique map
	$\Psi\colon \cG_{2}\bfpsr\cG_{1}\to \cG_{1}\bfpsr\cG_{2}$ that makes the diagram
	\begin{equation}\label{diag::defn of Psi from Phi}
	\begin{tikzcd}[ampersand replacement=\&, row sep = small]
		\Sigma_{2}\ast_{\mathbb{T}}\Sigma_1 \ar[rrr,"\Phi"]\ar[ddd] \& \& \&  \Sigma_1\ast_{\mathbb{T}}\Sigma_2 \ar[ddd, "\Pi_{1,2}"]
		\\
		\&  f\asttwoone e\ar[d,mapsto]\ar[ul, "\in", phantom] \&   e'\astonetwo f'\ar[d,mapsto]\ar[ur, "\in", phantom]\& 
		\\
		\& \bigl(\pi_2(f),\pi_1(e)\bigr) \&  \bigl(\pi_1(e'),\pi_2(f')\bigr) \& 
		\\
		\cG_{2}\bfpsr \cG_{1}\ar[rrr,"\Psi", dashed] \& \& \&  \cG_{1}\bfpsr \cG_{2}
	\end{tikzcd}
	\end{equation}
	commute. That is,
	\begin{align}\label{eq:defn of Psi from Phi}
		\Psi\bigl(\pi_2(f),\pi_1(e)\bigr) = \bigl(\Pi_{1,2}\circ\Phi\bigr)(f\asttwoone e).
	\end{align}
	Moreover, $\Psi$ is a factorisation rule for $(\cG_{1},\cG_{2})$.
\end{lemma}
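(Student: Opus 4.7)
My plan is to define $\Psi$ by the formula~\eqref{eq:defn of Psi from Phi}, verify it is well-defined, then show it is a homeomorphism satisfying \ref{item:Psi:2+5}--\ref{item:Psi:1+9}. Uniqueness is immediate, since the map $\Pi_{2,1}\colon \Sigma_2\ast_{\mathbb{T}}\Sigma_1\to\cG_2\bfpsr\cG_1$ from Lemma~\ref{lem:Pi12 and its counterpart} is surjective, so commutativity of diagram~\eqref{diag::defn of Psi from Phi} forces $\Psi$ to take the stated value.

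The main technical point is well-definedness: I need the composite $\Pi_{1,2}\circ\Phi$ to be constant on the fibers of $\Pi_{2,1}$. Two representatives $f\asttwoone e$ and $\tilde{f}\asttwoone\tilde{e}$ of elements of $\Sigma_2\ast_{\mathbb{T}}\Sigma_1$ lie in the same $\Pi_{2,1}$-fiber precisely when $\tilde{f}=z_1\cdot f$ and $\tilde{e}=z_2\cdot e$ for some $z_1,z_2\in\mathbb{T}$; using the defining relation of the quotient, one rewrites $\tilde{f}\asttwoone\tilde{e}=(z_1 z_2\cdot f)\asttwoone e$. Condition~\ref{it:MT:T equivariant} then gives $\Phi((z_1 z_2\cdot f)\asttwoone e)=(z_1 z_2)\cdot\Phi(f\asttwoone e)$ in $\Sigma_1\ast_{\mathbb{T}}\Sigma_2$, which is collapsed by $\Pi_{1,2}$ because $\pi_1$ and $\pi_2$ are $\mathbb{T}$-invariant. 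This is the step I expect to be the main obstacle: not difficult, but requiring careful bookkeeping of the $\mathbb{T}$-actions. Continuity of $\Psi$ then follows from the universal property of $\Pi_{2,1}$, which is a continuous open surjection, together with continuity of $\Pi_{1,2}\circ\Phi$; applying the same construction to $\Phi^{-1}$ produces a continuous two-sided inverse, so $\Psi$ is in fact a homeomorphism.

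Finally, I would verify each of \ref{item:Psi:2+5}--\ref{item:Psi:1+9} by direct translation of the corresponding~\ref{it:MT:source and range}--\ref{it:MT:associativity, left}. For instance, given $x=\pi_1(e)$, Condition~\ref{it:MT:diagram,top} yields $\Phi(r(e)\asttwoone e)=e\astonetwo s(e)$, and applying $\Pi_{1,2}$ gives $\Psi(r(x),x)=(\pi_1(e),\pi_2(s(e)))=(x,s(x))$, which is the first half of~\ref{item:Psi:3+11,6+10}; the second half is symmetric. The remaining implications are equally direct once one uses that $\pi_1, \pi_2$ intertwine range, source, and multiplication, that they restrict to the identity on $\cU$ under the standing identifications, and that the translation actions of $\Sigma_i$ on $\Sigma_i\ast_{\mathbb{T}}\Sigma_j$ descend via $\pi_i$ to the translation actions of $\cG_i$ on $\cG_i\bfpsr\cG_j$ used in the statement of \ref{item:Psi:4+8} and \ref{item:Psi:1+9}.
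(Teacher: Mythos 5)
Your proposal is correct and follows essentially the same route as the paper: well-definedness via \ref{it:MT:T equivariant} combined with $\mathbb{T}$-invariance of $\pi_{1},\pi_{2}$, the inverse obtained by running the construction on $\Phi\inv$, and \ref{item:Psi:2+5}--\ref{item:Psi:1+9} read off from \ref{it:MT:source and range}--\ref{it:MT:associativity, left} by applying $\Pi_{1,2}$. The only cosmetic difference is that you phrase continuity via the universal property of the open continuous surjection onto $\cG_{2}\bfpsr\cG_{1}$, where the paper invokes Fell's criterion; these amount to the same thing.
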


\begin{proof}
	First, we must check that Equation~\eqref{eq:defn of Psi from Phi} determines a well-defined map $\Psi$. If $\tilde{f}\in \Sigma_2$ and $\tilde{e}\in \Sigma_1$ are such that $\pi_2(\tilde{f})=\pi_2(f)$ and $\pi_1(\tilde{e})=\pi_1(e)$, then there exist $z,w\in\mathbb{T}$ such that $\tilde{f}=z\cdot f$ and $\tilde{e}=w\cdot e$. By Condition~\ref{it:MT:T equivariant}, this means that
	\[
		\Phi(\tilde{f}\asttwoone \tilde{e}) = zw \cdot \Phi(f\asttwoone e).
	\]
	For any $a\in\Sigma_1,b\in\Sigma_2$, we have
	\[
	\Pi_{1,2}((z\cdot a)\astonetwo (w\cdot b))=\bigl(\pi_1(z\cdot a), \pi_2(w\cdot b)\bigr)=\bigl(\pi_1( a), \pi_2( b)\bigr) = \Pi_{1,2}(a\astonetwo b).
	\]
	so that
	\[
		 \Pi_{1,2}(\Phi(\tilde{f}\asttwoone \tilde{e})) = \Pi_{1,2}(\Phi(f\asttwoone e)).
	\]
	Thus, $\Psi$ only depends on $\bigl(\pi_2(f),\pi_1(e)\bigr)$, not on $f$ and $e$. 
	
	Since $\Phi$ is a homeomorphism, we can exchange the roles of $\Sigma_1$ and $\Sigma_2$ and conclude that the above argument yields a map $\cG_{1}\bfpsr\cG_{2}\to \cG_{2}\bfpsr\cG_{1}$ that is clearly inverse to $\Psi$. Since all maps of Diagram~\eqref{diag::defn of Psi from Phi} are open and continuous (see Lemma~\ref{lem:Pi12 and its counterpart}), we conclude that $\Psi$ is continuous and open (by an application of Fell's criterion \cite[Proposition 1.1]{Wil2019}). 
	
	Now fix $\bigl(\pi_2(f),\pi_1(e)\bigr)=(g,x) \in \cG_{2}\bfpsr\cG_{1}$. If we let $e'\astonetwo f'\in \Sigma_1\ast_{\mathbb{T}}\Sigma_2$ be such that $\Phi(f\asttwoone e)=e'\astonetwo f'$, then $\Psi(g,x)= \bigl(\pi_1(e'),\pi_2(f') \bigr)$. 
	Since \[r(g)=r(f)\overset{\ref{it:MT:source and range}}{=}r(e')\text{ and }s(x)=s(e)\overset{\ref{it:MT:source and range}}{=}s(f'),\] Condition~\ref{item:Psi:2+5} holds. 
	We have
	\[
		\Psi\bigl(r(e),\pi_1(e)\bigr) = \bigl(\Pi_{1,2}\circ\Phi\bigr)(r(e)\asttwoone e) \overset{\ref{it:MT:diagram,top}}{=} \Pi_{1,2}(e\astonetwo s(e)) = (\pi_1(e),s(e))
	\]
	and
	\[
		\Psi\bigl(\pi_2(f),s(f)\bigr) = \bigl(\Pi_{1,2}\circ\Phi\bigr)(f\asttwoone s(f)) \overset{\ref{it:MT:diagram,top}}{=} \Pi_{1,2}(r(f)\astonetwo f) = (r(f), \pi_2(f)),
	\]
	so $\Psi$ satisfies Condition~\ref{item:Psi:3+11,6+10}.
	If $\pi_1(\tilde{e})\in s(e)\cG_{1}$, then
	\begin{align*}
		\Psi(\pi_2(f),\pi_1(e \tilde{e}))
		&=
		\bigl(\Pi_{1,2}\circ\Phi\bigr)(f\asttwoone (e\tilde{e}))
		\overset{\ref{it:MT:associativity, right}}{=}
		\Pi_{1,2}\bigl(e'\bullet \Phi(f'\asttwoone \tilde{e})\bigr)
		\\
		&=
		\pi_{1}(e')\bullet 
		\bigl(\Pi_{1,2}\circ\Phi\bigr)(f'\asttwoone \tilde{e})
		=
		\pi_{1}(e')\bullet \Psi(\pi_2(f'),\pi_1(\tilde{e})),
	\end{align*}
	proving that Condition~\ref{item:Psi:4+8} holds. Likewise, Condition~\ref{it:MT:associativity, left} implies Condition~\ref{item:Psi:1+9}. Therefore, $\Psi$ is a factorisation rule for $(\cG_{1},\cG_{2})$.
\end{proof}

\begin{remark}\label{rmk:Psi, Pi, and Phi}
    Since $\Psi$ is a factorisation rule, it follows from Lemma~\ref{lem:ZS gpd from Psi} that $(\cG_{1},\cG_{2})$ is a matched pair and from Equation~\eqref{eq:Psi} that the two-way actions $\HleftG$ and $\HrightG$  are encoded in the formula
    \begin{equation}\label{eq:Psi,1}
        \Psi(g,x)= (g\HleftG x, g\HrightG x).
    \end{equation}
    If we think of $\cG_{1}\bowtie\cG_{2}$ as an internal \ZS\ product, then the map $\Pi_{1,2}$ from Lemma~\ref{lem:Pi12 and its counterpart} can now be written as
   \begin{align*}
   \Pi_{1,2}\colon \Sigma_{1}\ast_{\mathbb{T}}\Sigma_{2} \to \cG_{1}\bowtie \cG_{2},
   &&
   e \astonetwo f \mapsto \pi_{1}(e )\pi_{2}(f ),
   \end{align*}
   and we furthermore get a second map
   \begin{align*}
       \Pi_{2,1}\colon \Sigma_{2}\ast_{\mathbb{T}}\Sigma_{1} \to \cG_{1}\bowtie \cG_{2}
       ,
   &&f \asttwoone e \mapsto \pi_{2}(f )\pi_{1}(e ).
   \end{align*}
   Considering how $\Psi$ was defined (Equation~\eqref{eq:defn of Psi from Phi}), we can thus rewrite Equation~\eqref{eq:Psi,1} as
    \begin{align}\label{eq:Pi and Phi}
        \bigl(\Pi_{1,2}\circ\Phi\bigr)(f\asttwoone e)
      &=
      \bigl(\pi_{2}(f)\HleftG \pi_{1}(e)\bigr)\bigl( \pi_{2}(f)\HrightG \pi_{1}(e)\bigr)
      =
      \Pi_{2,1} 
   (f \asttwoone e),
    \end{align}
   so that $\Pi_{2,1}$ is a continuous, open  surjection.
\end{remark}

    Since multiplication in $\cG_{1}\bowtie\cG_{2}$ entirely captures the factorization rule $\Psi$ (see Equation~\eqref{eq:EZS gpd:multiplication via Psi}), we  can now deduce the following:
\begin{corollary}\label{cor:Phi induces a given Psi}
Suppose we are given a matched pair $(\cG_{1},\cG_{2})$. A \wordforPhi\ $\Phi$ for the pair $(\Sigma_1,\Sigma_2)$ of twists induces (via Lemma~\ref{lem:defn of Psi from Phi}) the \emph{given} factorization rule of $\cG_{1}\bowtie\cG_{2}$ if and only if $\Phi$ satisfies Equation~\eqref{eq:Pi and Phi}.
\end{corollary}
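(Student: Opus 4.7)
The approach is essentially to unwind the definition of the induced factorisation rule from Lemma~\ref{lem:defn of Psi from Phi} and match it against the given factorisation rule of $\cG_1\bowtie\cG_2$. By Lemma~\ref{lem:defn of Psi from Phi}, the factorisation rule $\Psi'$ induced by $\Phi$ is uniquely determined by
\[
\Psi'\bigl(\pi_2(f),\pi_1(e)\bigr) = \bigl(\Pi_{1,2}\circ\Phi\bigr)(f\asttwoone e)
\]
for all composable $f\in \Sigma_2$, $e\in \Sigma_1$. On the other hand, the factorisation rule of the given matched pair $(\cG_1,\cG_2)$ is $\Psi(g,x) = (g\HleftG x, g\HrightG x)$ by Equation~\eqref{eq:Psi}. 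Since $\pi_1$ and $\pi_2$ are surjective, the equality $\Psi=\Psi'$ is equivalent to the pointwise equality
\[
\bigl(\Pi_{1,2}\circ\Phi\bigr)(f\asttwoone e) = \bigl(\pi_2(f)\HleftG\pi_1(e),\ \pi_2(f)\HrightG\pi_1(e)\bigr)
\]
in $\cG_1\bfpsr\cG_2$, for all composable $f,e$.

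Next, I would translate the right-hand side of Equation~\eqref{eq:Pi and Phi} into this fibred product picture. Under the identification $\cG_1\bfpsr\cG_2\cong\cG_1\bowtie\cG_2$ given by $(x,h)\mapsto xh$, which is a bijection by Lemma~\ref{lem:2 unique decomp in ZS}, the map $\Pi_{2,1}$ as rewritten in Remark~\ref{rmk:Psi, Pi, and Phi} sends $f\asttwoone e$ to $\pi_2(f)\pi_1(e)$, and Equation~\eqref{eq:2 decompositions in ZS product} expresses this product as $(\pi_2(f)\HleftG\pi_1(e))(\pi_2(f)\HrightG\pi_1(e))$. In the $\cG_1\bfpsr\cG_2$ coordinates, $\Pi_{2,1}(f\asttwoone e)$ therefore corresponds precisely to $\Psi(\pi_2(f),\pi_1(e))$; well-definedness of this correspondence is ensured by~\ref{item:ZS7}.

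Combining the two paragraphs, Equation~\eqref{eq:Pi and Phi} becomes literally the statement
\[
\Psi'\bigl(\pi_2(f),\pi_1(e)\bigr) = \Psi\bigl(\pi_2(f),\pi_1(e)\bigr)
\]
for all composable $f,e$. Both implications of the corollary then follow at once: if $\Phi$ satisfies Equation~\eqref{eq:Pi and Phi}, surjectivity of $\pi_1,\pi_2$ gives $\Psi=\Psi'$; conversely, if $\Psi=\Psi'$, then the above identity holds pointwise, which is exactly Equation~\eqref{eq:Pi and Phi}.

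I do not anticipate any serious obstacle; the only subtle point is to keep track of the implicit identification of $\cG_1\bfpsr\cG_2$ with $\cG_1\bowtie\cG_2$ when interpreting $\Pi_{2,1}$, and to make sure that~\ref{item:ZS7} is invoked so that the pair $(\pi_2(f)\HleftG\pi_1(e),\pi_2(f)\HrightG\pi_1(e))$ genuinely lies in $\cG_1\bfpsr\cG_2$. Everything else is a direct unfolding of the definitions already set up in the preceding remark and lemma.
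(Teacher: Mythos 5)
Your proof is correct and follows exactly the route the paper intends: the corollary is stated as an immediate deduction from Remark~\ref{rmk:Psi, Pi, and Phi}, and your unfolding — identifying the induced rule $\Psi'$ via Equation~\eqref{eq:defn of Psi from Phi}, translating Equation~\eqref{eq:Pi and Phi} into $\cG_1\bfpsr\cG_2$ coordinates through Lemma~\ref{lem:2 unique decomp in ZS}, and using surjectivity of $\pi_1,\pi_2$ onto the fibred product — is precisely that deduction made explicit.
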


The above leads to the following definition:
\begin{definition}
    Given a matched pair $(\cG_{1},\cG_{2})$ of groupoids and two twists $\Sigma_i\to \cG_{i}$, we say that a twist \wordforPhi\ $\Phi$ \emph{covers} $\cG_{1}\bowtie\cG_{2}$ if $\Phi$ satisfies Equation~\eqref{eq:Pi and Phi}; in other words, the given factorisation rule for $(\cG_{1},\cG_{2})$ coincides with the rule constructed from $\Phi$ in Lemma~\ref{lem:defn of Psi from Phi}. 
\end{definition}

Even though it is not defined as a two-way action as in the case of the external \ZS\ product of groupoids, 
a twist \wordforPhi~$\Phi$ captures
the essence of a \ZS\ product structure,
in full analogy to how
the map $\Psi$ defined at~\eqref{eq:Psi} and its properties 
entirely capture the data needed for the \ZS\ product $\cG_{1}\bowtie\cG_{2}$ (see Lemma~\ref{lem:ZS gpd from Psi}).
The following example makes this more precise.

\begin{example}\label{ex:MT in the trivial case}
Consider the trivial twists $\Sigma_i=\mathbb{T}\times\cG_{i}$
{and assume that $(\cG_{1},\cG_{2})$ is a matched pair}.  We want to find a \wordforPhi\
   $\Phi\colon \Sigma_{2}\ast_{\mathbb{T}}\Sigma_{1}\to \Sigma_{1}\ast_{\mathbb{T}}\Sigma_{2}$ for 
   the pair $(\Sigma_1,\Sigma_2)$ that covers $\cG_{1}\bowtie\cG_{2}$. Let us denote the given factorisation rule of $\cG_{1}\bowtie\cG_{2}$ by $\Psi$. 
   Since $\Phi$
   must satisfy \ref{it:MT:T equivariant} ($\mathbb{T}$-equivariance) and 
   Equation~\eqref{eq:Pi and Phi} (see Corollary~\ref{cor:Phi induces a given Psi} and Remark~\ref{rmk:Psi, Pi, and Phi}), there is not that much choice: $\Phi$ must be of the form
    \begin{equation}\label{eq:Phi for trivial twists}
        \Phi\left(
        (z,g)\asttwoone (w,x)
        \right)
        =
        (w,y) \astonetwo \bigl(z\varphi(g,x),h\bigr)
        \text{ where }
        (y,h)=\Psi(g,x)
    \end{equation}
    for some map $\varphi\colon \cG_{2}\bfpsr\cG_{1}\to\mathbb{T}$. We will quickly argue here that $\varphi\equiv 1$ does the trick. (In Section~\ref{sec:cocycles}, we will study the more general but still special case where the twists $\Sigma_{i}$ are induced by $2$-cocycles, and we will see in Proposition~\ref{prop:EZS twist:2-cocycles} that 
    the choice $\varphi\equiv 1$
    is not the only one.)

    Recall that $\Psi$ satisfies \ref{item:Psi:3+11,6+10}, i.e.,
    $\Psi(r(x),x)=(x,s(x))$ and $\Psi(g,s(g))=(r(g),g)$
    for all $x\in \cG_{1}$ and $g\in\cG_{2}$; this means exactly that $\Phi$ satisfies Condition~\ref{it:MT:diagram,top}.
Clearly, Condition~\ref{item:Psi:4+8} of $\Psi$ corresponds to~\ref{it:MT:associativity, right} of $\Phi$,  and likewise~\ref{item:Psi:1+9} to~\ref{it:MT:associativity, left}.

\end{example}

\begin{example}[One Fell line bundle]\label{ex:MT if one is trivial}
Suppose $(\cG_{1},\cG_{2})$ is a pair of matched groupoids with unit space $\cU$ and that $\Sigma_{1}$ is a twist over $\cG_{1}$; let $L_{1}=\mathbb{C}\times_{\mathbb{T}}\Sigma_{1}$ be the associated line bundle whose elements we write as 
\begin{equation}\label{eq:[lambda,e]}
    [\lambda,e] \coloneq
    \{
        (z\lambda, \overline{z}\cdot e) : z\in \mathbb{T}
    \}.
\end{equation}
Recall that $L_{1}$ is a Fell bundle over the groupoid $\cG_{1}$ with projection map $q_{1}\colon [\lambda,e]\mapsto \pi(e)$; we let $r_{L_{1}}=r_{\cG_{1}}\circ q_{1}$ and $p\colon L_{1}^\times \to \Sigma_{1}$ the map that sends $[\lambda,e]$ for $\lambda>0$ to $e$ or, more generally, $p([\lambda,e])=\mathrm{Ph}(\lambda)\cdot e$.

If we equip $\cG_{2}$ with the trivial twist $\Sigma_{2}=\mathbb{T}\times \cG_{2}$, then a \wordforPhi\ $\Phi\colon \Sigma_{2}\ast_{\mathbb{T}}\Sigma_{1}\to \Sigma_{1}\ast_{\mathbb{T}}\Sigma_{2}$ as in Definition~\ref{def:MT} contains the same data as a \emph{$(\cG_{1},\cG_{2})$-compatible $\cG_{2}$-action} on $L_{1}$ 
\[
    \mvisiblespace \HleftB \mvisiblespace \colon \cG_{2}\bfp{s}{\rho} L_{1} \to L_{1},
    \quad
    (g,[\lambda,e]) \mapsto g\HleftB [\lambda,e],
\]
 in the sense of \cite[Definition 3.1]{DuLi:ZS} (which also can be defined for non-\etale\ groupoids; see \cite[Definition 4.1]{DuLi:ImprimThms-pp}).
Indeed, given 
$\Phi$, we let
\[
g\HleftB [\lambda,e] \coloneq  [\lambda ,f]
\text{ where $f$ is such that }f\astonetwo (1,g\HrightG\pi_{1}(e)) =
\Phi((1,g)\asttwoone e),
\]
and conversely, given a $(\cG_{1},\cG_{2})$-compatible $\cG_{2}$-action $\HleftB$ on $L_{1}$, we let
\begin{align*}
\Phi\colon &&\Sigma_{2}\ast_{\mathbb{T}} \Sigma_{1}&\to \Sigma_{1}\ast_{\mathbb{T}} \Sigma_{2},
&
(z,g) \asttwoone e
&\mapsto
        p(g\HleftB [1,e]) \astonetwo
        (z, g\HrightG \pi_{1}(e)).
\end{align*}
A detailed proof of these claims can be found in Appendix~\ref{appendix:FellBdl} .
\end{example}

We now put a groupoid structure on the quotient space $\Sigma_{1}\ast_{\mathbb{T}} \Sigma_{2}$ using the \wordforPhi\
$\Phi$. 

\begin{proposition}\label{prop.external.twist}
Given a pair $(\Sigma_{1},\Sigma_{2})$ of matched twists as in Definition~\ref{def:MT}
with \wordforPhi~$\Phi$,
 the following  makes $\Sigma_{1}\ast_{\mathbb{T}}\Sigma_{2}$ a locally compact Hausdorff groupoid
 which we will denote by $\Sigma_{1}\ZST\Sigma_{2}$:
 with $\rho$ and $\sigma$ as in Lemma~\ref{lem:anchor}, 
we declare the composable pairs to be
\[
(\Sigma_{1}
\ZST
\Sigma_{2})^{(2)}
=
\{
(\xi,\eta): \sigma(\xi)=\rho(\eta)
\}.
\]
For an element $e\astonetwo f$, we define its inverse as
    \begin{equation}\label{eq:EZS:inversion}(e\astonetwo f)\inv=\Phi(f\inv\asttwoone e\inv).
    \end{equation}
    Given a second element $e'\astonetwo f'$ 
    such that $\sigma(e\astonetwo f)=\rho(e'\astonetwo f')$,
    define
    \begin{equation}\label{eq:EZS:multiplication}
    (e\astonetwo f)(e'\astonetwo f')=e\bullet \Phi(f\asttwoone e')\bullet f'.
    \end{equation}
\end{proposition}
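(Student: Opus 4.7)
My plan is to verify, in order, well-definedness of the inverse and multiplication formulas on the quotient, the identity law, associativity, the inverse law, and continuity of the operations; the LCH property of $\Sigma_{1}\ast_{\mathbb{T}}\Sigma_{2}$ is already established in Lemma~\ref{lem:external:top on quotient}. Well-definedness follows from \ref{it:MT:T equivariant} together with the identities $(z\cdot e)^{-1}=\bar z\cdot e^{-1}$ and $(\bar z\cdot f)^{-1}=z\cdot f^{-1}$, which come from centrality of $\jmath$ (\ref{it:twist:T central}); these show that substituting $(z\cdot e,\bar z\cdot f)$ for $(e,f)$ leaves both defining formulas unchanged. For the identity, if $u=r(e)$ then \ref{it:MT:diagram,top} gives $\Phi(u\asttwoone e)=e\astonetwo s(e)$, whence
\[
(u\astonetwo u)(e\astonetwo f)=u\bullet(e\astonetwo s(e))\bullet f=e\astonetwo f;
\]
the right identity is analogous using the second half of \ref{it:MT:diagram,top}.

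For associativity I would unwind both sides. Writing $\Phi(f_1\asttwoone e_2)=\tilde e_2\astonetwo\tilde f_1$ and $\Phi(f_2\asttwoone e_3)=e'_2\astonetwo f'_2$, the multiplication formula turns the left-hand triple product $\bigl((e_1\astonetwo f_1)(e_2\astonetwo f_2)\bigr)(e_3\astonetwo f_3)$ into $(e_1\tilde e_2)\bullet \Phi\bigl((\tilde f_1 f_2)\asttwoone e_3\bigr)\bullet f_3$, which by \ref{it:MT:associativity, left} equals the common expression $(e_1\tilde e_2)\bullet\Phi(\tilde f_1\asttwoone e'_2)\bullet f'_2\bullet f_3$. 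Applying \ref{it:MT:associativity, right} to $\Phi\bigl(f_1\asttwoone (e_2 e'_2)\bigr)$ shows that the right-hand triple product reduces to the same expression.

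The main obstacle is the inverse law. Writing $\Phi(f^{-1}\asttwoone e^{-1})=e''\astonetwo f''$, the idea is to apply \ref{it:MT:associativity, left} to $\Phi\bigl((ff^{-1})\asttwoone e^{-1}\bigr)=\Phi(r(f)\asttwoone e^{-1})$; combined with \ref{it:MT:diagram,top} and the observation that $r(f)=s(e)=r(e^{-1})$, this yields the equation $\Phi(f\asttwoone e'')\bullet f''=e^{-1}\astonetwo r(e)$. Using \ref{it:MT:source and range} to see that $s(f'')=r(e)$, one then extracts $\Phi(f\asttwoone e'')=e^{-1}\astonetwo(f'')^{-1}$; substituting into the multiplication formula collapses $(e\astonetwo f)(e\astonetwo f)^{-1}$ to $r(e)\astonetwo r(e)$, which is the unit $\rho(e\astonetwo f)$. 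The dual identity at the source end is obtained by the mirrored argument with \ref{it:MT:associativity, right} in place of \ref{it:MT:associativity, left}.

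Finally, continuity of multiplication and inversion follows from the universal property of the quotient topology: each operation lifts, via the groupoid operations on $\Sigma_1$ and $\Sigma_2$ and the continuity of $\Phi$, to a continuous, $\mathbb{T}$-invariant map on the appropriate fibred product of the $\Sigma_i$, and the quotient map is open by Lemma~\ref{lem:external:top on quotient}. Once the inverse-law computation above is executed carefully, the remaining axioms amount to tracking range and source through the formulas and are routine.
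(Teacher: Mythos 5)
Your proposal is correct and follows essentially the same route as the paper: well-definedness from \ref{it:MT:T equivariant}, associativity by unwinding both triple products with \ref{it:MT:associativity, right} and \ref{it:MT:associativity, left}, the unit computations from \ref{it:MT:diagram,top}, and continuity from openness of the quotient map (Lemma~\ref{lem:external:top on quotient}). The only cosmetic difference is that the paper packages the $\xi\xi^{-1}$ and $\xi^{-1}\xi$ computations through the auxiliary reformulations of the product in Lemma~\ref{lm.external.multiplication}, whereas you apply \ref{it:MT:associativity, left} directly to $\Phi\bigl((ff^{-1})\asttwoone e^{-1}\bigr)$ — the same computation, unpacked.
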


The reader should compare Equation~\eqref{eq:EZS:multiplication} with Equation~\eqref{eq:EZS gpd:multiplication via Psi}, which shows the relationship between the product in $\cG_{1}\bowtie\cG_{2}$ and the factorisation rule $\Psi$.

\begin{remark}
We chose the notation $\Sigma_{1}\ZST \Sigma_{2}$ in place of $\Sigma_{1}\bowtie \Sigma_{2}$ since the latter notation could be confused with a \ZS\ product of the \emph{groupoids} $\Sigma_{1}$ and $\Sigma_{2}$, forgetting their additional structure as twists over $\cG_{1},\cG_{2}$, 
and since the groupoid structure highly depends on $\Phi$ (see Remark~\ref{rmk:Phi never unique}).
\end{remark}

Before we prove Proposition~\ref{prop.external.twist}, we first show that the multiplication map in Equation~\eqref{eq:EZS:multiplication} can be written in some other forms.

\begin{lemma}\label{lm.external.multiplication} The multiplication map in Equation~\eqref{eq:EZS:multiplication} can be also written as
\begin{align*}
    (e\astonetwo f)(e'\astonetwo f') 
    &= e\bullet \Phi(f\bullet \Phi\inv(e'\astonetwo f')) \\
    &= \Phi(\Phi\inv(e\astonetwo f) \bullet e') \bullet f'
\end{align*}   
\end{lemma}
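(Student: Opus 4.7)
The plan is to verify each of the two alternative formulas by substituting in $\Phi\inv$ on the right-hand side and then applying the ``associativity'' conditions~\ref{it:MT:associativity, right} and~\ref{it:MT:associativity, left} to rewrite $\Phi$ in terms of $\Phi(f\asttwoone e')$, which is exactly what appears in the defining formula~\eqref{eq:EZS:multiplication}.

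For the first identity, I write $\Phi\inv(e'\astonetwo f')=f''\asttwoone e''$, so that $\Phi(f''\asttwoone e'')=e'\astonetwo f'$. From \ref{it:MT:source and range} I obtain $r(f'')=r(e')$, and since the composability hypothesis $\sigma(e\astonetwo f)=\rho(e'\astonetwo f')$ gives $s(f)=r(e')$, the product $ff''$ is defined in $\Sigma_{2}$. Thus $f\bullet(f''\asttwoone e'')=(ff'')\asttwoone e''$. Applying \ref{it:MT:associativity, left} with the ``$\tilde{f},f,e$'' of that condition played by $f,f'',e''$ and using the already-fixed representative $e'\astonetwo f'=\Phi(f''\asttwoone e'')$, we obtain
\[
\Phi\bigl(f\bullet \Phi\inv(e'\astonetwo f')\bigr)
=\Phi\bigl((ff'')\asttwoone e''\bigr)
=\Phi(f\asttwoone e')\bullet f'.
\]
Left-multiplying by $e\bullet$ reproduces~\eqref{eq:EZS:multiplication}.

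For the second identity, I write $\Phi\inv(e\astonetwo f)=\tilde{f}\asttwoone\tilde{e}$ and use \ref{it:MT:source and range} to verify that $s(\tilde{e})=s(f)=r(e')$, so that $\tilde{e}e'$ is defined in $\Sigma_{1}$ and $\Phi\inv(e\astonetwo f)\bullet e'=\tilde{f}\asttwoone(\tilde{e}e')$. Applying \ref{it:MT:associativity, right} then gives
\[
\Phi\bigl(\tilde{f}\asttwoone(\tilde{e}e')\bigr)=\tilde{e}'\bullet \Phi(\tilde{f}'\asttwoone e'),
\]
where $\Phi(\tilde{f}\asttwoone\tilde{e})=\tilde{e}'\astonetwo\tilde{f}'$. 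Now the only mildly subtle point: $\tilde{e}'\astonetwo\tilde{f}'$ equals $e\astonetwo f$ in the quotient, but the pair of representatives may differ by some $z\in\mathbb{T}$, i.e.\ $\tilde{e}'=z\cdot e$ and $\tilde{f}'=\bar z\cdot f$. Using~\ref{it:MT:T equivariant} to pull the scalar out of $\Phi(\tilde{f}'\asttwoone e')$ and then the standard identity $(z\cdot e)\bullet(\bar z\cdot(e''\astonetwo f''))=e\bullet(e''\astonetwo f'')$ (the two copies of $z$ cancel inside the product $ee''$ in $\Sigma_{1}$), the scalars disappear and one finds $\tilde{e}'\bullet\Phi(\tilde{f}'\asttwoone e')=e\bullet\Phi(f\asttwoone e')$. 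Appending $\bullet f'$ finishes the proof.

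The computation is essentially symmetric between the two identities; the only nontrivial wrinkle is the $\mathbb{T}$-cancellation in the second case, which is precisely the well-definedness phenomenon noted in Remark~\ref{rmk:Phi for units}(i). Everything else is direct bookkeeping with $\Phi$, $\Phi\inv$, and the two associativity conditions.
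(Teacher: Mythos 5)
Your proof is correct and follows essentially the same route as the paper: name the element $\Phi\inv(e'\astonetwo f')$ (resp.\ $\Phi\inv(e\astonetwo f)$), then apply the associativity condition \ref{it:MT:associativity, left} (resp.\ \ref{it:MT:associativity, right}) to identify the result with $\Phi(f\asttwoone e')\bullet f'$ (resp.\ $e\bullet\Phi(f\asttwoone e')$). The paper only writes out the first identity and dismisses the second with ``follows similarly,'' so your explicit treatment of the second identity, including the $\mathbb{T}$-scalar cancellation between the two representatives, is a welcome bit of extra care rather than a deviation.
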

\begin{proof} Suppose $\Phi\inv(e'\astonetwo f')=a\asttwoone b$. We have by Condition~\ref{it:MT:associativity, right} that
 \[\Phi(f\asttwoone e')\bullet f' = \Phi(fa \asttwoone b).\]
Therefore, 
 \[e\bullet \Phi(f\asttwoone e')\bullet f' = e\bullet \Phi(f\bullet(a \asttwoone b)) = e\bullet \Phi(f\bullet \Phi\inv(e'\asttwoone f')).\]
The other equality follows similarly. 
\end{proof}

\begin{proof}[Proof of Proposition~\ref{prop.external.twist}]
It is clear that the inversion formula~\eqref{eq:EZS:inversion} is well defined. To see that the multiplication formula~\eqref{eq:EZS:multiplication} is well defined, just note that Condition~\ref{it:MT:T equivariant} implies 
that
for all $z,w\in\mathbb{T}$,
\begin{align*}
    [z\cdot e]\bullet \Phi\bigl( [\overline{z}\cdot f]\asttwoone e'\bigr)\bullet f'
    &=
    e\bullet\Phi(f\asttwoone e')\bullet f'
    \\&=
    e \bullet \Phi\bigl( f\asttwoone [w\cdot e']\bigr)\bullet [\overline{w}\cdot f']
    .
\end{align*}
Moreover, we can recover the range of an element $e\astonetwo f\in\Sigma_{1}\ZST  \Sigma_{2}$ as 
    \begin{align}
        (e\astonetwo f)(e\astonetwo f)\inv
        &= (e\astonetwo f)\Phi(f\inv \asttwoone e\inv) 
        \notag
        \\
        &= e \bullet \Phi\bigl(f\bullet \Phi\inv \Phi(f\inv \asttwoone e\inv)\bigr) 
        &&\text{by Lemma~\ref{lm.external.multiplication}}
        \notag
        \\
        &= e \bullet \Phi(ff\inv \asttwoone e\inv)
        \notag
        \\
        &= e \bullet \Phi(s(e) \asttwoone e\inv)
        && \text{since $s(e)=r(f)$}
        \notag\\
        &= e \bullet  (e\inv \asttwoone r(e))
        &&\text{by Condition~\ref{it:MT:diagram,top}}
        \notag\\
        &= r(e) \asttwoone r(e),
        \label{eq:range}
\intertext{and its source as} 
        (e\astonetwo f)\inv (e\astonetwo f) &= \Phi(f\inv \asttwoone e\inv) (e\astonetwo f)
        \notag\\
        &= \Phi\bigl(\Phi\inv\Phi(f\inv \asttwoone e\inv) \bullet e\bigr) \bullet f && \text{by Lemma~\ref{lm.external.multiplication}}\notag\\
        &= \Phi(f\inv \asttwoone r(f)) \bullet f
        && \text{since $s(e)=r(f)$}
        \notag \\
        &= (s(f) \astonetwo f\inv)\bullet f && \text{by Condition~\ref{it:MT:diagram,top}} \notag\\
        &= s(f)\astonetwo s(f). 
        \label{eq:source}
    \end{align}
In other words, the unit space of $\Sigma_{1}\ZST \Sigma_{2}$ is given by 
\begin{align*}
    \left(\Sigma_{1}\ZST \Sigma_{2}\right)\z
    &=
    \{r(e) \asttwoone r(e) : e\astonetwo f\in\Sigma_{1}\ast_{\mathbb{T}} \Sigma_{2}\}
    \\&=\{s(f)\astonetwo s(f): e\astonetwo f\in\Sigma_{1}\ast_{\mathbb{T}} \Sigma_{2} \}.
\end{align*}
To prove that multiplication is associative, we take $e_{1}, e_{2}, e_3\in \Sigma_{1}$ and $f_{1}, f_{2}, f_3\in\Sigma_{2}$ with appropriate ranges and sources so that  the product $(e_{1}\astonetwo f_{1})(e_{2}\astonetwo f_{2})(e_3\astonetwo f_3)$ makes sense. Denote $\Phi(f_{1}\asttwoone e_{2})=a_{1}\astonetwo b_{1}$ and $\Phi(f_{2}\asttwoone e_3)=a_{2}\astonetwo b_{2}$. We have that
\begin{align*}
     \left((e_{1}\astonetwo f_{1})(e_{2}\astonetwo f_{2})\right)(e_3\astonetwo f_3) 
    &= (e_{1}a_{1}\astonetwo b_{1} f_{2}) (e_3\astonetwo f_3) \\
    &= e_{1}a_{1} \bullet \Phi(b_{1}f_{2} \asttwoone e_3) \bullet f_3 \\
    &= e_{1}a_{2} \bullet \left(\Phi(b_{1}\asttwoone a_{2}) \bullet b_{2} \right) \bullet f_3
    &&\text{%
    by \ref{it:MT:associativity, left}%
    },
\end{align*}
where the last equality follows from Condition~\ref{it:MT:associativity, left} using $\Phi(f_{2}\asttwoone e_3)=a_{2} \ast b_{2}$. On the other hand, a similar computation shows that
\begin{align*}
    (e_{1}\astonetwo f_{1})\left((e_{2}\astonetwo f_{2})(e_3\astonetwo f_3) \right)
    &= (e_{1}\astonetwo f_{1}) (e_{2} a_{2}\astonetwo b_{2}f_3) \\
    &= e_{1} \bullet \Phi(f_{1} \asttwoone e_{2}a_{2}) \bullet b_{2}f_3 \\
    &= e_{1} \bullet \left(a_{2}\bullet \Phi(b_{1}\asttwoone a_{2}) \right) \bullet b_{2}f_3.
\end{align*}
Both of these equal $(e_{1}a_{2}) \bullet \Phi(b_{1}\asttwoone a_{2}) \bullet (b_{2}f_3)$, and thus the multiplication is associative. 

We have already seen in Lemma~\ref{lem:external:top on quotient} that the space is locally compact Hausdorff. Continuity of the inversion and multiplication maps follows directly from openness and continuity of the quotient map, from continuity of the inversion in $\Sigma_{1}$ and $\Sigma_{2}$ respectively of the actions in Lemma~\ref{lem:anchor}, and from continuity of~$\Phi$.
\end{proof}

Now that we have a groupoid structure on $\Sigma_{1}\ZST \Sigma_{2}$, we must check that our inclusion maps of Lemma~\ref{lem:iotas} are homomorphisms.

\begin{lemma}\label{lem:external:r and s}
In the setting of Proposition~\ref{prop.external.twist}, 
    the maps $\iota^{1}_{1,2}$ and $\iota^{2}_{1,2}$ of Lemma~\ref{lem:iotas} are groupoid homomorphisms and
    the unit space of the groupoid $\Sigma_{1}\ZST \Sigma_{2}$ is homeomorphic to $\cU$. With this identification,  range and source map are given by the maps from Lemma~\ref{lem:anchor}, i.e.,
\begin{equation}
    \label{eq:r and s for Lambda}
r(e\astonetwo f)=r(e)
\quad\text{ and }\quad
s(e\astonetwo f)=s(f).
\end{equation}
\end{lemma}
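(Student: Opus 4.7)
The bulk of the work for identifying the unit space is already done inside the proof of Proposition~\ref{prop.external.twist}: the computations leading to~\eqref{eq:range} and~\eqref{eq:source} give that $(e\astonetwo f)(e\astonetwo f)\inv = r(e)\astonetwo r(e)$ and $(e\astonetwo f)\inv (e\astonetwo f)=s(f)\astonetwo s(f)$. Since every $u\in\cU$ arises as $r(e)=s(f)$ for some composable pair (for example, take $e=f=u$), the unit space of $\Sigma_1\ZST\Sigma_2$ is exactly $\{u\astonetwo u:u\in\cU\}$.

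Next, I plan to verify that $\cU\to(\Sigma_1\ZST\Sigma_2)\z$, $u\mapsto u\astonetwo u$, is a homeomorphism. It is continuous as a composition of continuous maps and it is a bijection by the description of the unit space just obtained. Its inverse is the restriction to the unit space of the assignment $e\astonetwo f\mapsto r(e)$, which is continuous: indeed, the map $(e,f)\mapsto r(e)$ is continuous on $\Sigma_1\bfpsr\Sigma_2$ and $\mathbb{T}$-invariant, so it descends continuously to $\Sigma_1\ast_{\mathbb{T}}\Sigma_2$ because the quotient map is open and continuous by Lemma~\ref{lem:external:top on quotient}. Identifying the unit space with $\cU$ in this way, the range and source of $e\astonetwo f$ become $r(e)$ and $s(f)$ respectively, which are precisely the maps $\rho$ and $\sigma$ from Lemma~\ref{lem:anchor}; this gives Equation~\eqref{eq:r and s for Lambda}.

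Finally, I verify that $\iota^1_{1,2}$ and $\iota^2_{1,2}$ are groupoid homomorphisms. Formulas~\eqref{eq:r and s for Lambda} already show that both maps preserve range and source. For multiplicativity of $\iota^1_{1,2}$, take $e_1,e_2\in\Sigma_1$ with $s(e_1)=r(e_2)$. The multiplication formula~\eqref{eq:EZS:multiplication} combined with the first equality in Condition~\ref{it:MT:diagram,top} gives
\[
\iota^1_{1,2}(e_1)\,\iota^1_{1,2}(e_2)
= e_1\bullet \Phi(r(e_2)\asttwoone e_2)\bullet s(e_2)
= e_1\bullet (e_2\astonetwo s(e_2))\bullet s(e_2)
= (e_1 e_2)\astonetwo s(e_2),
\]
which equals $\iota^1_{1,2}(e_1 e_2)$. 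The analogous check for $\iota^2_{1,2}$ proceeds symmetrically, this time invoking the second equality in Condition~\ref{it:MT:diagram,top} to evaluate $\Phi(f_1\asttwoone s(f_1)) = r(f_1)\astonetwo f_1$ inside the product $(r(f_1)\astonetwo f_1)(r(f_2)\astonetwo f_2)$.

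There is no real obstacle: the unit space description is already implicit in the computations~\eqref{eq:range}--\eqref{eq:source}, and the homomorphism property of the two inclusions is precisely what Condition~\ref{it:MT:diagram,top} is designed to produce once one plugs in a unit into the multiplication formula.
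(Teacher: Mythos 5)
Your proposal is correct and follows essentially the same route as the paper: the unit space and the formulas for $r$ and $s$ are read off from the computations~\eqref{eq:range}--\eqref{eq:source}, the embedding $u\mapsto u\astonetwo u$ is inverted by a continuous map (you use $\rho$ where the paper uses $\Pi_{1,2}$ restricted to the unit space, which is equivalent), and multiplicativity of the $\iota^{i}_{1,2}$ comes from plugging units into~\eqref{eq:EZS:multiplication} and invoking Condition~\ref{it:MT:diagram,top}. The only cosmetic difference is that the paper also checks $\iota^{1}_{1,2}(e)\inv=\iota^{1}_{1,2}(e\inv)$ explicitly, which in any case follows automatically from multiplicativity and uniqueness of inverses in a groupoid.
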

\begin{proof}
 If $(e, e')\in\Sigma_{1}^{(2)}$, then
    \begin{align*}
        \iota^{1}_{1,2}(e)\iota^{1}_{1,2}( e')
        &=
        (e\astonetwo s(e))
        ( e'\astonetwo s( e'))
        \overset{\eqref{eq:EZS:multiplication}}{=}
        e\bullet \Phi(s(e) \asttwoone  e')\bullet s(e)
        \\
       &=
        e\bullet \Phi(\iota^{1}_{2,1} ( e'))\bullet s(e)
        \overset{\ref{it:MT:diagram,top}}{=}
        e\bullet ( e'\astonetwo s( e'))\bullet s(e)
        =
        \iota^{1}_{1,2}(e e')
        ,
        \intertext{and}
        \iota^{1}_{1,2}(e)\inv
        &=
        (e\astonetwo s(e))\inv 
        \overset{\eqref{eq:EZS:inversion}}{=}
        \Phi (s(e)\asttwoone e\inv)
        \overset{\ref{it:MT:diagram,top}}{=}
        \iota^{1}_{1,2}(e\inv).
    \end{align*}
    One likewise proves that 
    $\iota_{1,2}^2$ is a homomorphism.

    The maps $\iota^{i}_{1,2}\colon \Sigma_{i}\to \Sigma_{1}\ZST \Sigma_{2}$
clearly coincide on $\cU=\Sigma_{1}\z=\Sigma_{2}\z$. The resulting map $u\mapsto u\astonetwo u\in \Sigma_{1}\ZST \Sigma_{2}$ has inverse the map $\Pi_{1,2}$ from 
Lemma~\ref{lem:Pi12 and its counterpart} restricted to $(\Sigma_{1}\ZST \Sigma_{2})\z$, which is continuous. This proves that $u\mapsto u\astonetwo u\in \Sigma_{1}\ZST \Sigma_{2}$ is an embedding, i.e., a homeomorphism onto its image; we can henceforth identify the unit space of $\Sigma_{1}\ZST \Sigma_{2}$ with $\cU$. The claim about the range and source maps now follows from Equations~\eqref{eq:range} and~\eqref{eq:source}.
\end{proof}

We now prove that $\Sigma_{1}\ZST  \Sigma_{2}$ is a twist over the \ZS\ product $\cG_{1}\bowtie\cG_{2}$.  

\begin{theorem}\label{thm:external ZS-product}
     Given a pair $(\Sigma_{1},\Sigma_{2})$ of matched twists 
     with \wordforPhi\ $\Phi$
     as in Definition~\ref{def:MT} over a pair $(\cG_{1},\cG_{2})$, 
     the sequence
    \begin{align*}
        \mathbb{T}\times \cU \overset{\jmath}{\to} \Sigma_{1}\ZST \Sigma_{2}\overset{\Pi_{1,2}}{\to}\cG_{1}\bowtie\cG_{2}
        \end{align*}{where}\begin{align*}
            \jmath(z,u)
        \coloneq 
        z\cdot (u\astonetwo u)
        =\jmath_{1}(1,u)\astonetwo \jmath_{2}(z,u)
        =
        \jmath_{1}(z,u)\astonetwo \jmath_{2}(1,u),
    \end{align*}
    makes the groupoid $\Sigma_{1}\ZST \Sigma_{2}$ a twist over $\cG_{1}\bowtie\cG_{2}$. We call it the \emph{external \ZS\ twist}.
\end{theorem}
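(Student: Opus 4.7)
The plan is to verify Conditions~\ref{it:twist:jmath}--\ref{it:twist:T central} of Definition~\ref{def:twist}, together with the fact that $\jmath$ and $\Pi_{1,2}$ are continuous groupoid homomorphisms. First, I would observe that $\jmath$ is well defined: the quotient relation defining $\ast_{\mathbb{T}}$ makes $(u,\jmath_{2}(z,u))$ equivalent to $(\jmath_{1}(z,u),u)$ (apply $z$ to the first coordinate and $\bar{z}$ to the second), so the two expressions for $\jmath(z,u)$ agree and coincide with $z\cdot(u\astonetwo u)$.

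That $\Pi_{1,2}$ is a continuous open surjection is Lemma~\ref{lem:Pi12 and its counterpart}, giving half of~\ref{it:twist:pi}. For the homomorphism property, I would expand
\[
(e\astonetwo f)(e'\astonetwo f')
=
e\bullet\Phi(f\asttwoone e')\bullet f'
=
(ea)\astonetwo(bf'),
\]
where $a\astonetwo b \coloneq \Phi(f\asttwoone e')$, so that $\Pi_{1,2}$ of the product is $\bigl(\pi_{1}(e)\pi_{1}(a),\pi_{2}(b)\pi_{2}(f')\bigr)$. By the definition of $\Psi$ (Equation~\eqref{eq:defn of Psi from Phi}) and Equation~\eqref{eq:Psi,1}, $(\pi_{1}(a),\pi_{2}(b))=\bigl(\pi_{2}(f)\HleftG\pi_{1}(e'),\pi_{2}(f)\HrightG\pi_{1}(e')\bigr)$, which matches the multiplication formula~\eqref{eq:EZS gpd:multiplication} in $\cG_{1}\bowtie\cG_{2}$. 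Compatibility with inverses follows from the same equation applied to $\Phi(f\inv\asttwoone e\inv)$ combined with Equation~\eqref{eq:Pi and Phi}.

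For~\ref{it:twist:jmath}, note that $\Pi_{1,2}^{-1}(\cU)$ consists of classes $e\astonetwo f$ with $\pi_{1}(e),\pi_{2}(f)\in\cU$, i.e., with $e\in\jmath_{1}(\mathbb{T}\times\cU)$ and $f\in\jmath_{2}(\mathbb{T}\times\cU)$. Using the quotient relation, any such class has a unique representative of the form $(\jmath_{1}(z,u),u)$ with $(z,u)\in\mathbb{T}\times\cU$, yielding bijectivity of $\jmath$ onto $\Pi_{1,2}^{-1}(\cU)$. Since $\jmath$ factors (up to restriction) as $\iota^{1}_{1,2}\circ\jmath_{1}$, and both factors are embeddings by Lemma~\ref{lem:iotas} and the corresponding property of $\jmath_{1}$, $\jmath$ is a homeomorphism onto its image; moreover, $\jmath(1,u)=u\astonetwo u$, which is the unit identified with $u\in\cU$ via Lemma~\ref{lem:external:r and s}. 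A short separate computation, using Condition~\ref{it:MT:T equivariant} together with Remark~\ref{rmk:Phi for units}(ii), shows that $\jmath$ is a group-bundle homomorphism.

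For~\ref{it:twist:T central}, the key calculation uses the multiplication formula~\eqref{eq:EZS:multiplication} and Condition~\ref{it:MT:diagram,top}: writing $\xi=e\astonetwo f$,
\[
\jmath(z,r(e))\cdot\xi
=
\bigl(\jmath_{1}(z,r(e))\astonetwo r(e)\bigr)(e\astonetwo f)
=
\jmath_{1}(z,r(e))\bullet(e\astonetwo s(e))\bullet f
=
(z\cdot e)\astonetwo f,
\]
which equals $z\cdot\xi$ by Lemma~\ref{lem:external:top on quotient}. The symmetric calculation for $\xi\cdot\jmath(z,s(f))$ yields $e\astonetwo(z\cdot f)$, which again equals $z\cdot\xi$ since the quotient relation gives $(z\cdot e)\astonetwo f = e\astonetwo (z\cdot f)$. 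The main obstacle throughout is purely notational bookkeeping—keeping track of the identifications between $\cU$, $\Sigma_{i}\z$, and $(\Sigma_{1}\ZST\Sigma_{2})\z$—since Conditions~\ref{it:MT:T equivariant} and~\ref{it:MT:diagram,top} were precisely designed so that $\Phi$ acts trivially on units and intertwines the $\mathbb{T}$-actions, making every required identity almost automatic.
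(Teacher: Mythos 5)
Your proposal is correct and its overall architecture matches the paper's: both verify \ref{it:twist:jmath}--\ref{it:twist:T central} by combining Lemma~\ref{lem:Pi12 and its counterpart} for $\Pi_{1,2}$, the identity $\Pi_{2,1}=\Pi_{1,2}\circ\Phi$ (equivalently, the $\Psi$-formulation you use) for the homomorphism property, the computation $\Pi_{1,2}\inv(\cU)=\jmath(\mathbb{T}\times\cU)$ via $\cG_{1}\cap\cG_{2}=\cU$, and Conditions~\ref{it:MT:T equivariant} and~\ref{it:MT:diagram,top} for centrality. (The paper folds the centrality identity $\jmath_{1}(z,u)\bullet\xi=\xi\bullet\jmath_{2}(z,v)=z\cdot\xi$ into its check that $\jmath$ is a homomorphism rather than verifying \ref{it:twist:T central} separately, but that is cosmetic.)

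The one place where you genuinely diverge is the proof that $\jmath$ is a homeomorphism onto its image. The paper runs a direct net argument: it lifts a convergent net $\{\jmath(z_\lambda,u_\lambda)\}$ through the open quotient map to $\Sigma_{1}\bfpsr\Sigma_{2}$, extracts the scalars $w_\lambda$ witnessing the $\mathbb{T}$-balancing, and uses compactness of $\mathbb{T}$ to conclude $(z_\lambda,u_\lambda)\to(z,u)$. You instead observe that $\jmath=\iota^{1}_{1,2}\circ\jmath_{1}$ (since $\jmath(z,u)=\jmath_{1}(z,u)\astonetwo s(\jmath_{1}(z,u))$) and that both factors are already known to be embeddings --- $\jmath_{1}$ by \ref{it:twist:jmath} for $\Sigma_{1}$ and $\iota^{1}_{1,2}$ by Lemma~\ref{lem:iotas}. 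This is a clean shortcut that reuses work the paper has already done in Lemma~\ref{lem:iotas} (whose own proof is essentially the net argument the paper repeats here), so nothing is lost and a page of net-chasing is saved. The only micro-gap in your write-up is in the identification of $\Pi_{1,2}\inv(\cU)$: an element $e\astonetwo f$ lies there when $\pi_{1}(e)\pi_{2}(f)\in\cU$, i.e.\ $\pi_{1}(e)=\pi_{2}(f)\inv$, and one must invoke $\cG_{1}\cap\cG_{2}=\cU$ inside $\cG_{1}\bowtie\cG_{2}$ to conclude that $\pi_{1}(e)$ and $\pi_{2}(f)$ are individually units; you assert the conclusion directly, but the missing step is one line.
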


\begin{remark}\label{rmk:MT in the trivial case,p2}
    In the setting where $\Sigma_{1}$ and $\Sigma_{2}$ are both trivial and the \wordforPhi\ is the map at~\eqref{eq:Phi for trivial twists} (see Example~\ref{ex:MT in the trivial case}), the groupoid we recover is (canonically isomorphic to) the Cartesian product $\mathbb{T}\times (\cG_{1}\bowtie\cG_{2})$.
\end{remark}

\begin{remark}  By construction, the restriction $(\Sigma_{1}\ZST  \Sigma_{2})|_{\cG_{j}}\coloneq \Pi_{1,2}\inv(\cG_{j})$ of the external \ZS\ twist to one of the closed subgroupoids $\cG_{j}$ of $\cG_{1}\bowtie\cG_{2}$ is canonically isomorphic  to $\Sigma_{j}$.
To be precise,
in the following diagram, the dashed arrow is the obvious map onto the $j$-th component, it is a groupoid isomorphism, and the diagram commutes.
    \[
    \begin{tikzcd}[column sep = huge, row sep = small]
    &(\Sigma_{1}\ZST \Sigma_{2})|_{\cG_{j}}\ar[rd,"\Pi_{1,2 }|"]\ar[dd,dashed, "\cong"]&
    \\
    \mathbb{T}\times\cU 
    \ar[ru,"\jmath|"]
    \ar[rd,"\jmath_{j}"']&    
    &
    \cG_{j}
    \\    
   &
    \Sigma_{j}
    \ar[ru,"\pi_{j}"']&
    \end{tikzcd}
    \]
In particular, even though $\cG_{j}$ does not need to be open in $\cG_{1}\bowtie\cG_{2}$(see Lemma~\ref{lem:clopen in ZS-product if r-discrete}), the restriction of the \ZS\ twist to $\cG_{j}$ is a twist.
\end{remark}

\begin{proof}[Proof of Theorem~\ref{thm:external ZS-product}]
We have seen in Proposition~\ref{prop.external.twist} that $\Sigma_{1}\ZST \Sigma_{2}$ is a locally compact Hausdorff groupoid and in 
{Lemma~\ref{lem:Pi12 and its counterpart}} that $\Pi_{1,2}$ is a continuous, open surjection. To see that it is a homomorphism, note first that since $\pi_{1},\pi_{2}$ are homomorphisms, we have
\begin{align*}
    \Pi_{1,2}\bigl(a\bullet (e\astonetwo f)\bullet b\bigr)
    &=
    \Pi_{1,2}\bigl((a e)\astonetwo (f b)\bigr)
    =
    \pi_{1}(ae)\ \pi_{2}(f b)
    \\&=
    \pi_{1}(a)\pi_{1}(e)\ \pi_{2}(f)\pi_{2}( b)
    =
    \pi_{1}(a)\ \Pi_{1,2}(e\astonetwo f) \ \pi_{2}(b),
\end{align*}
which implies
\begin{align*}
    \Pi_{1,2} (e\astonetwo f)\Pi_{1,2}(e'\astonetwo f')
    &=
    [\pi_{1}(e)\pi_{2}(f)][\pi_{1}(e')\pi_{2}(f')]
    &&\text{by def'n of }\Pi_{1,2}
    \\
    &=
    \pi_{1}(e)\ \Pi_{2,1}(f \asttwoone e') \ \pi_{2}(f')
    &&\text{by def'n of }\Pi_{2,1}
    \\
    &=
    \pi_{1}(e)\ (\Pi_{1,2}\circ \Phi)(f \asttwoone e') \ \pi_{2}(f')
    &&\text{by~
    {Eq.~\eqref{eq:Pi and Phi}}}
    \\&=
    \Pi_{1,2}\bigl(e\bullet \Phi(f\asttwoone e')\bullet f'\bigr)&&\text{by the above}
    \\&=
    \Pi_{1,2} \bigl( (e\astonetwo f)(e'\astonetwo f')\bigr)
    &&\text{by \eqref{eq:EZS:multiplication},}
\end{align*}
as needed.

 Since the $\mathbb{T}$-action on $\Sigma_{1}\ZST \Sigma_{2}$ is continuous (Lemma~\ref{lem:external:top on quotient}), the injective map $\jmath$ is continuous. To see that  $\jmath$ is a homomorphism, we first note that for appropriate $u,v\in\cU$ and all $z\in\mathbb{T}$,
 \begin{align*}
    \jmath_{1}(z,u)\bullet (e\astonetwo f)
    &=
    (\jmath_{1}(z,u) e)\astonetwo f
    =
    (z\cdot e)\astonetwo f
    \\
    &=
    e\astonetwo (z\cdot f)
    =
    e\astonetwo (f \jmath_{2}(z,v) )
    =
    (e\astonetwo f)\bullet \jmath_{2}(z,v),
\end{align*}
which are all equal to $ z\cdot (e\astonetwo f)$ as defined in Lemma~\ref{lem:external:top on quotient}. Therefore,
 \begin{align*}
     ([z_{1}\cdot u]\astonetwo u)([z_{2}\cdot u]\astonetwo u)
     &=
     \jmath_{1}(z_{1}, u)\bullet \Phi(u \asttwoone [z_{2}\cdot u])\bullet \jmath_{2}(1,u)
     \\
     &=
     z_{1}\cdot \Phi(u \asttwoone [z_{2}\cdot u]) &&\text{by the above}
     \\
     &=
     z_{1}\cdot [z_{2}\cdot \Phi(u \asttwoone u)]
     &&\text{by~\ref{it:MT:T equivariant}}
     \\
     &=
     [z_{1}z_{2}]\cdot (u\astonetwo u)
     &&\text{by~\ref{it:MT:diagram,top}}
     .
 \end{align*}
 Next, we show that the range of $\jmath$ is $\Pi_{1,2}\inv (\cU)$. Since $\Pi_{1,2}(\jmath(z,u))=\Pi_{1,2}([z\cdot u]\astonetwo u) = \pi_{1}(z\cdot u)\pi_{2}(u)=u$, we see that $\jmath (\mathbb{T}\times\cU)\subseteq \Pi_{1,2}\inv(\cU)$. Conversely, if $e\astonetwo f\in \Pi_{1,2}\inv(\cU)$, then $\pi_{1}(e)\pi_{2}(f)\in \cU$, so $\pi_{1}(e)=\pi_{2}(f)\inv$. Since $\cG_{1}\cap\cG_{2}=\cU$ in $\cG_{1}\bowtie\cG_{2}$, this implies $\pi_{1}(e)=\pi_{2}(f)=u\in \cU$. Since $\Sigma_{1},\Sigma_{2}$ are twists, this implies that there exist $z_{1},z_{2}\in\mathbb{T}$ such that $e=\jmath_{1} (z_{1},u)$ and $f=\jmath_{2} (z_{2},u)$. Therefore, $e\astonetwo f= \jmath (z_{1}z_{2},u)$, which proves that $\jmath (\mathbb{T}\times\cU)\supseteq \Pi_{1,2}\inv(\cU)$.

    It remains to show that the injective map $\jmath$ is a homeomorphism onto its image. So suppose $\{\jmath(z_\lambda,u_\lambda)\}_{\lambda}$ is a net converging to $j(z,u)$ in $\Sigma_{1}\ZST \Sigma_{2}$;
    since it suffices for us to show that a subnet of $\{(z_\lambda,u_\lambda)\}$ converges to $(z,u)$ in $\mathbb{T}\times\cU$, we will frequently pass to subnets in the following argument (or, in other words, without loss of generality assume that the subnet we pass to is the net we started with). Since the quotient map is open (Lemma~\ref{lem:external:top on quotient}), 
    (a subnet of)
    $\{\jmath(z_{\lambda},u_{\lambda})\}$ allows a lift to a net $\{(e_{\lambda}, f_{\lambda})\}_{\lambda}$ in $\Sigma_{1}\bfpsr \Sigma_{2}$ such that 
    $e_{\lambda}\to \jmath_{1}(z,u)$ and $ f_{\lambda}\to \jmath_{2}(1,u)$.
    Being a lift means that
    \[
    e_{\lambda}\astonetwo f_{\lambda} = \jmath(z_{\lambda},u_{\lambda})
    =
    \jmath_{1}(z_{\lambda},u_{\lambda})\astonetwo \jmath_{2}(1,u_{\lambda})
    \]
    for all $\lambda$, so the definition of the equivalence relation says that there exists $w_\lambda\in\mathbb{T}$ such that
    \[
    e_{\lambda}
    =
    \jmath_{1}(\overline{w_\lambda}
    z_\lambda
    ,u_{\lambda})
    \quad\text{and}\quad
    f_{\lambda} = \jmath_{2}(
    w_\lambda
    ,u_{\lambda}).
    \]
    Since  $\jmath_{1}$ and $\jmath_{2}$ are embeddings, the convergences
    \(
    f_{\lambda} \to \jmath_{2}(
    1
    ,u)
    \)
    and
    \(
    e_\lambda \to  \jmath_{1}(z
    ,u)
    \)
    thus imply that $(
    w_\lambda
    ,u_\lambda)\to (1
    ,u)$ and $( 
    \overline{w_\lambda}z_{\lambda}
    ,u_\lambda)\to (z
    ,u)$ in $\mathbb{T}\times\cU$. We conclude that (a subnet of) $\{(z_\lambda,u_\lambda)\}$ converges to $(z,u)$, proving that $\jmath$ is an embedding.
\end{proof}

\begin{example}[One Fell line bundle; continuation of Example~\ref{ex:MT if one is trivial}]\label{ex:MT if one is trivial,pt2}
Suppose again that $(\cG_{1},\cG_{2})$ is a pair of matched groupoids with unit space $\cU$,  and that $\Sigma_{1}$ is a twist over $\cG_{1}$ with associated line Fell bundle $L_{1}=\mathbb{C}\times_{\mathbb{T}}\Sigma_{1}$. If we let $\Sigma_{2}=\mathbb{T}\times\cG_{2}$
be the trivial twist,
then we have seen that a \wordforPhi~$\Phi$ for $(\Sigma_{1}, \Sigma_{2})$ can be translated into a $(\cG_{1},\cG_{2})$-compatible $\cG_{2}$-action $\HleftB$ on $L_{1}$. By \cite[Proposition 3.5]{DuLi:ZS}, such an action gives rise to a \ZS\ product Fell bundle $\cB=(\qB\colon L_{1}\bowtie \cG_{2} \to \cG_{1}\bowtie\cG_{2})$ that encodes the action; we claim that this bundle $\cB$ is (canonically isomorphic to) the  Fell line bundle $L$ that is associated to the \ZS\ twist $\Sigma_{1}\ZST \Sigma_{2}$,
whose elements are of the form $[\lambda, e\astonetwo (z,g)]$ for $\lambda\in\mathbb{C}$, $e\in\Sigma_{1}$, $z\in\mathbb{T}$, and $g\in \cG_{2}$.

If we denote the elements of $L_{1}$ by $[\lambda,e]$, as before in Equation~\eqref{eq:[lambda,e]}, then the total space of $\cB$ is defined as
\[
L_{1}\bowtie \cG_{2}
\coloneq
\left\{
    ([\lambda,e],g) \in L_{1}\times \cG_{2}
    :
    s_{\Sigma_{1}}(e)=r_{\cG_{2}}(g)
\right\}
\]
with the subspace topology of $L_{1}\times \cG_{2}$; the projection map $\qB$ maps $([\lambda,e],g)$ to $(\pi_{1}(e),g)\in \cG_{1}\bowtie\cG_{2}$; the 
Fell bundle
multiplication is given 
by
\begin{align}\label{eq:Fbl multiplication}
    ([\lambda,e ],g )([\lambda',e'],g')
    &\coloneq
    \bigl(
        [\lambda,e ](g \HleftB [\lambda',e']), (g \HrightG \pi_{1}(e')) g'
    \bigr)
    ,
\end{align}
where $s_{\cG_{1}}(g)=r_{\Sigma_{1}}(e')$;
and the $*$-operation is given by
\[
    ([\lambda,e ],g )^*
    \coloneq
    \bigl(
        g\inv \HleftB [\lambda,e ]^*,
        g\inv \HrightG \pi_{1}(e)\inv
    \bigr)
    .
    \]
We show in Appendix~\ref{appendix:FellBdl} that the map
\begin{align}\label{eq:Omega}
    {
    \Omega
    }
    \colon 
    L=\mathbb{C}\times_{\mathbb{T}} (\Sigma_{1}\ZST \Sigma_{2}) \to 
    L_{1}\bowtie \cG_{2} 
    ,
    \quad
    \bigl[
        \lambda, e \astonetwo (z,g )
    \bigr]
    \mapsto 
    \bigl(
    [\lambda z,  e ], g 
    \bigr)
    ,
\end{align}
is an isomorphism of Fell bundles.
\end{example}

We would like to point out that
the paper at hand was
motivated by our pursuit of the \ZS\ product of two Fell bundles. 
Definition~\ref{def:MT} suggests that 
an external \ZS\ product should be thought of in terms of the \wordforPhi. An
analogous map
can be defined in the more general setting of Fell bundles. We shall leave the construction of the \ZS\ product of two Fell bundles to future work.

\section{The internal \ZS\ product of two twists}\label{sec:IZS}

We now turn our attention to the other 
face
of a \ZS\ product: the internal product.

\begin{definition}
\label{def:IZS} 
    An \emph{internal \ZS\ structure} for a twist $(\Sigma,\jmath,\pi)$  over a groupoid $\cG $ with unit space $\cU$ is a tuple $(\Sigma_{1},\Sigma_{2})$ of two closed subgroupoids of $\Sigma$ such that
     \begin{enumerate}[label=\textup{(I\arabic*)}]
     \item\label{it:IZS:product} 
     $\Sigma=\Sigma_{1}\cdot\Sigma_{2}$ and
     \item\label{it:IZS:cap} $\Sigma_{1}\cap \Sigma_{2}=\jmath(\mathbb{T}\times \cU)$.
     \end{enumerate}
\end{definition}

Compared with the internal \ZS\ product of groups (see also \cite{BPRRW:ZS} for groupoids), Condition~\ref{it:IZS:product} is a natural replacement of the condition that a group is a product of its two subgroups
(see Condition~\hyperlink{item:intro:K product}{(1)} in the introduction),
and Condition~\ref{it:IZS:cap} is the analogue of the intersection of the subgroups being trivial
(Condition~\hyperlink{item:intro:G cap H}{(2)} \emph{loc.\ cit.}).
Indeed, 
in order for each $\Sigma_i$ to be a twist itself, it
must contain a copy of $\jmath(\mathbb{T}\times\cU)$, 
so this 
is the smallest intersection one can impose on these two subgroupoids $\Sigma_i$. 

We first outline several properties of the internal \ZS\ structure.

\begin{lemma}\label{lem:decomposing e_{2}e_{1}}
    Suppose $(\Sigma_{1},\Sigma_{2})$ is an internal \ZS\ structure for a twist $(\Sigma,\jmath,\pi)$ over a groupoid~$\cG$. Then the following hold for $i=1,2$.
    \begin{enumerate}[label=\textup{(I\arabic*)}, start=3]
        \item\label{it:IZS:order} $(\Sigma_{2},\Sigma_{1})$ is also an internal \ZS\ structure for $(\Sigma,\jmath,\pi)$;
        \item\label{it:IZS:saturated} $\Sigma_{i} = \pi\inv(\pi(\Sigma_{i}))$ and
        $\Sigma_{i}\z = \cU$;
        \item\label{it:IZS:closed} $\cG_{i}\coloneq  \pi(\Sigma_{i})$ is a closed subgroupoid of $\cG$;
        \item\label{it:IZS:ZS} $\cG=\cG_{1}\bowtie\cG_{2}$;
        \item\label{it:IZS:twist} the triple $(\Sigma_{i} , \jmath, \pi|_{\Sigma_{i}})$ is a twist over $\cG_{i} $;
        \item\label{it:IZS:mult open} the map $\Sigma_{i}\bfpsr \Sigma_{j}\to\Sigma, (e,f)\mapsto ef,$ is open.
    \end{enumerate}
    For $(f, {e})\in \Sigma_{2}\bfpsr \Sigma_{1}$, we further have the following.
    \begin{enumerate}[label=\textup{(I\arabic*)}, start=9]
        \item\label{it:decomposing e2e1:existence} There exist $(e',f')\in\Sigma_{1}\bfpsr \Sigma_{2}$ such that $f{e}=e'f'$.
        \item\label{it:decomposing e2e1:uniqueness} The choice of $(e',f')$ 
        in \ref{it:decomposing e2e1:existence}
        has only one degree of freedom; to be more precise,
        \[
        \{( e'', f'')\in\Sigma_{1}\bfpsr \Sigma_{2}: f{e}= e'' f''\}=\{(z\cdot  e',\overline{z}\cdot f'):z\in\mathbb{T}\}.
        \]
    \end{enumerate}
\end{lemma}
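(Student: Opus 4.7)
The plan is to prove the items in the order (I9), (I10), (I4), (I3), (I5), (I7), (I6), (I8), since later items depend on earlier ones. Items (I9) and (I10) are essentially immediate: (I9) follows from \ref{it:IZS:product}, while for (I10), the equation $e''f''=e'f'$ gives $(e')\inv e''\in\Sigma_1$ equal to $f'(f'')\inv\in\Sigma_2$, so by \ref{it:IZS:cap} both equal some $\jmath(z,u)$, and \ref{it:twist:T central} then yields $e''=z\cdot e'$ and $f''=\overline{z}\cdot f'$.

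For (I4), the inclusion $\cU\subseteq\jmath(\mathbb{T}\times\cU)\subseteq\Sigma_i$ gives $\Sigma_i\z=\cU$; if $\pi(\sigma)=\pi(\sigma_i)$ with $\sigma_i\in\Sigma_i$, then $\sigma=\jmath(z,r(\sigma_i))\sigma_i\in\Sigma_i$, since both factors are in $\Sigma_i$.  Item (I3) follows by inversion: $\Sigma=\Sigma\inv=(\Sigma_1\Sigma_2)\inv=\Sigma_2\Sigma_1$, and the intersection condition is symmetric. For (I5), $\pi(\Sigma_i)$ is a subgroupoid because $\pi$ is a groupoid homomorphism, and it is closed because $\pi$ is proper (hence closed) while $\Sigma_i$ is closed by hypothesis.

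For (I7), the condition \ref{it:twist:T central} and the $\jmath$-fibre condition are inherited from $\Sigma$ via \ref{it:IZS:cap}; openness of $\pi|_{\Sigma_i}\colon\Sigma_i\to\cG_i$ follows because for any open $V\subseteq\Sigma$, one has $\pi(V\cap\Sigma_i)=\pi(V)\cap\cG_i$, with the nontrivial inclusion using (I4). Turning to (I6), the equality $\cG_1\cap\cG_2=\cU$ is immediate from \ref{it:IZS:cap} applied to any common lift. I would define a factorisation rule $\Psi\colon \cG_2\bfpsr\cG_1\to\cG_1\bfpsr\cG_2$ by lifting $(g,x)$ to $(f,e)\in\Sigma_2\bfpsr\Sigma_1$, using (I9) to write $fe=e'f'$, and setting $\Psi(g,x)=(\pi(e'),\pi(f'))$; well-definedness follows from (I10) and $\mathbb{T}$-invariance of $\pi$. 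Continuity of $\Psi$ is obtained by piecing together continuous local sections of $\pi|_{\Sigma_i}$ (available by (I7)) with continuity of multiplication in $\Sigma$. A direct check of \ref{item:Psi:2+5}--\ref{item:Psi:1+9} combined with Lemma~\ref{lem:ZS gpd from Psi} shows that $(\cG_1,\cG_2)$ is a matched pair, and Equation~\eqref{eq:EZS gpd:multiplication via Psi} then gives $\cG=\cG_1\bowtie\cG_2$.

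Finally, for (I8), the multiplication $m\colon\Sigma_i\bfpsr\Sigma_j\to\Sigma$ is invariant under the diagonal $\mathbb{T}$-action and factors through the open quotient map of Lemma~\ref{lem:external:top on quotient} to a continuous bijection $\bar m\colon\Sigma_i\ast_{\mathbb{T}}\Sigma_j\to\Sigma$ by (I9) and (I10). To see that $\bar m$ is open, I would argue locally: around $\sigma_0=e_0f_0$, use local trivialisations of the twists $\Sigma\to\cG$ and $\Sigma_i\to\cG_i$, together with the identification $\cG\cong\cG_1\bfpsr\cG_2$ from (I6), to realise $\bar m$ as a continuous bijection of the form $(z_1,x,z_2,h)\mapsto(z_1z_2\, c(x,h),(x,h))$ for a continuous cocycle $c$, which is manifestly open. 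The main obstacles are the continuity of $\Psi$ in (I6) and the openness of $\bar m$ in (I8); both reduce to careful bookkeeping with $\mathbb{T}$-actions through local trivialisations of the twists involved.
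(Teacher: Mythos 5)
Most of your argument is sound and in several places cleaner than the paper's: you get \textup{(I9)} directly from \textup{(I1)} (the paper takes a detour through the groupoid-level decomposition and lifts), you get closedness of $\cG_{i}$ in \textup{(I5)} from properness of $\pi$ rather than from the paper's net argument with Fell's criterion, and your identity $\pi(V\cap\Sigma_{i})=\pi(V)\cap\cG_{i}$ (valid because $\Sigma_{i}=\pi\inv(\cG_{i})$ by \textup{(I4)}) is a tidy way to obtain openness of $\pi|_{\Sigma_{i}}$ for \textup{(I7)}. Your proofs of \textup{(I3)}, \textup{(I4)} and \textup{(I10)} coincide with the paper's. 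For \textup{(I8)}, your local-trivialisation picture does work, granted the topological identification $\cG\cong\cG_{1}\bfpsr\cG_{2}$ coming from \textup{(I6)}; the paper instead runs a net argument that lifts along the open map $\pi$ and uses compactness of $\mathbb{T}$, but it leans on exactly the same input from \textup{(I6)}.

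The genuine gap is the continuity of $\Psi$ in your proof of \textup{(I6)}. Continuous local sections of $\pi|_{\Sigma_{i}}$ together with continuity of multiplication in $\Sigma$ give you continuity of $(g,x)\mapsto fe$, i.e.\ of the \emph{composition} step; they say nothing about the subsequent \emph{decomposition} step $fe\mapsto(\pi(e'),\pi(f'))$. Continuity of that step is equivalent to openness of the continuous bijection $\cG_{1}\bfpsr\cG_{2}\to\cG$, $(x,g)\mapsto xg$ --- which is precisely the topological content of the statement $\cG=\cG_{1}\bowtie\cG_{2}$ you are trying to prove, and is also what your openness argument for \textup{(I8)} presupposes. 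As sketched, the argument is therefore circular, or at least has a hole exactly where the difficulty sits. The paper does not prove this from scratch either: it verifies $\cG=\cG_{1}\cdot\cG_{2}$ and $\cG_{1}\cap\cG_{2}=\cU$, hence unique decomposition, and then invokes \cite[Proposition~7]{BPRRW:ZS} to conclude $\cG=\cG_{1}\bowtie\cG_{2}$, outsourcing the homeomorphism question to that reference. You should either do the same, or supply an independent proof that the multiplication $\cG_{1}\bfpsr\cG_{2}\to\cG$ is open \emph{before} attempting \textup{(I6)} and \textup{(I8)}.
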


\begin{proof}

    \ref{it:IZS:order} For $e\in \Sigma$, use \ref{it:IZS:product} to write $e\inv = e_{1}e_{2}$ for some $e_{i}\in\Sigma_{i}$, so that $e=e_{2}\inv e_{1}\inv$ and hence $\Sigma=\Sigma_{2}\cdot\Sigma_{1}$.
    
    \ref{it:IZS:saturated} The containment $\Sigma_{i} \subseteq \pi\inv(\pi(\Sigma_{i}))$ is obvious. For the reverse containment, note that \ref{it:IZS:cap} implies $\jmath(\mathbb{T}\times\cU)\subseteq \Sigma_{i}$, so in particular, $\Sigma_{i}\z=\Sigma\z$. Since $\Sigma_{i}$  is a subgroupoid, we further conclude for any $e\in\Sigma_{i}$ and $z\in\mathbb{T}$ that $z\cdot e=\jmath(z,r(e))e\in\Sigma_{i}$, which proves $\Sigma_{i} \supseteq \pi\inv(\pi(\Sigma_{i}))$.

    \ref{it:IZS:closed} Since $\pi$ is a groupoid homomorphism, $\cG_{i}$ is a subgroupoid. Now suppose $\{g_\lambda\}$ is a net in $\cG_{i}$ that converges to $\pi(e)=g$ in $\cG$. Since $\pi$ is open, Fell's criterion \cite[Proposition 1.1]{Wil2019} says that (a subnet of) $\{g_\lambda\}$ can be lifted under $\pi$ to a net $\{e_\lambda\}$ in $\Sigma$ such that $e_\lambda\to e$. As $\pi(e_\lambda)=g_\lambda\in \cG_{i}$, we have $e_\lambda\in\pi\inv (\cG_{i})=\Sigma_{i}$ by~\ref{it:IZS:saturated}. Since $\Sigma_{i}$ is closed by assumption, the limit $e$ of the net must be contained in $\Sigma_{i}$, which proves that $\pi(e)=g$ is contained in $\pi(\Sigma_{i})=\cG_{i}$, as claimed. (Coincidentally, this proof also shows that $\pi_{i}\coloneq  \pi|_{\Sigma_{i}}\colon \Sigma_{i}\to \cG_{i}$ is an open map, which we require for \ref{it:IZS:twist}.)
    
    \ref{it:IZS:ZS} 
     Since  $\cG_{i}$ is a closed subgroupoid of the \LCH\ groupoid  $\cG$ by \ref{it:IZS:closed}, it is itself \LCH.
    Since $\pi$ is a groupoid homomorphism, it follows from Condition~\ref{it:IZS:product} that
    \[
        \cG=\pi(\Sigma) = \pi(\Sigma_{1}\cdot\Sigma_{2}) = \pi(\Sigma_{1})\cdot\pi(\Sigma_{2})=\cG_{1}\cdot\cG_{2}.
    \]
    Note that
    \begin{equation}
        \label{eq:G1 cap G2}
        \cG_{1}\cap\cG_{2}
        =
        \pi(\Sigma_{1})\cap \pi(\Sigma_{2})
        =
        \pi(\Sigma_{1}\cap \Sigma_{2})
        \overset{\ref{it:IZS:cap} }{=}
        \pi(\jmath(\mathbb{T}\times \cU))
        \overset{\ref{it:twist:jmath}}{=}
        \cU.
    \end{equation}
    This implies that every element $k\in \cG$ has a \emph{unique} decomposition as a product $k=x g$ with $x\in\cG_{1},g\in\cG_{2}$. By \cite[Proposition 7]{BPRRW:ZS}, $\cG$ is therefore the internal \ZS\ product $\cG=\cG_{1}\bowtie\cG_{2}$.

    \ref{it:IZS:twist} By the same argument as in \ref{it:IZS:ZS}, $\Sigma_{i}$ is a \LCH\ groupoid. The maps  $\jmath_{i}\coloneq  \jmath \colon \mathbb{T}\times\cU\to \Sigma_{i}\subseteq\Sigma$ and $\pi_{i}$ are clearly continuous groupoid homomorphisms.
    Since
    \[
        \pi_{i}\inv (\cU)
        =
        \pi\inv(\cU)\cap \Sigma_{i}
        \overset{\ref{it:twist:jmath}}{=}
        \jmath(\mathbb{T}\times\cU)\cap \Sigma_{i}
        \overset{\ref{it:IZS:cap}}{=}
        \jmath(\mathbb{T}\times\cU)
        \overset{\ref{it:twist:jmath}}{=}
        \pi\inv(\cU),
    \]
    we conclude that the subspace topology that $\pi_{i}\inv(\cU)=\pi\inv(\cU)$ inherits from $\Sigma_{i}$ is identical to the topology that it inherits from $\Sigma$. In other words, $\jmath_{i}$
    satisfies \ref{it:twist:jmath} because $\jmath$ does. Furthermore, \ref{it:twist:T central} clearly holds as well by assumption on $(\Sigma,\jmath,\pi)$. Since we have seen in the proof of \ref{it:IZS:closed} that $\pi_{i}$ is open, we have shown all conditions in Definition~\ref{def:twist}.

    \ref{it:IZS:mult open} Suppose that $\{\sigma_\lambda\}$ is a net in $\Sigma$ that converges to $ef$ for $e\in\Sigma_{i},f\in\Sigma_{j}$; it suffices to lift a subnet of $\{\sigma_\lambda\}$
    under the multiplication map to nets $\{e_\lambda'\}$ in $\Sigma_{1}$ and $\{f_{\lambda}'\}$ in $\Sigma_{2}$ such that $e_\lambda'\to e$ and $f_{\lambda}'\to f$.
    By \ref{it:IZS:product} (applied to either $(\Sigma_{1},\Sigma_{2})$ or $(\Sigma_{2},\Sigma_{1})$, using \ref{it:IZS:order}), for every $\lambda$, there exists $e_\lambda\in\Sigma_{i}$ and $f_\lambda\in\Sigma_{j}$ such that $\sigma_\lambda=e_\lambda f_\lambda$. Since $\pi$ is a continuous groupoid homomorphism, we have
    \[
    \pi(e_\lambda)\pi(f_\lambda)=\pi(\sigma_\lambda)\to\pi(ef)=\pi(e)\pi(f)
    \]
    in $\cG$. As $\pi(e_\lambda)\in\cG_{i}$ and $\pi(f_\lambda)\in\cG_{i}$, this implies together with \ref{it:IZS:ZS} that
    \[
    \pi(e_\lambda)\to \pi(e)
    \quad\text{and}\quad
    \pi(f_\lambda)\to\pi(f).
    \]
    Since $\pi$ is open and since we can pass to a subnet, there exist without loss of generality  $z_\lambda,w_\lambda\in\mathbb{T}$ such that
    \[
    z_\lambda \cdot e_\lambda \to e
    \quad\text{and}\quad
    w_\lambda\cdot f_\lambda\to f.
    \]
    By compactness of $\mathbb{T}$ (and since we can pass to subnets), we can without loss of generality assume that $z_\lambda\to z$ and $w_\lambda\to w$ for some $z,w\in \mathbb{T}$; in particular, 
    \begin{align*}
        e_\lambda'\coloneq  z\cdot e_\lambda =(z \overline{z_\lambda})\cdot(z_\lambda \cdot e_\lambda)
        &\to (z \overline{z})\cdot e = e
        &&\text{and}
        \\
        f_\lambda'\coloneq  w\cdot f_\lambda =(w \overline{w_\lambda})\cdot(w_\lambda \cdot f_\lambda)&\to (w \overline{w})\cdot f = f
        ,
    \end{align*}
    so that
    \[
        ef \leftarrow (z\cdot e_\lambda) (w\cdot f_\lambda) = (zw)\cdot \sigma_\lambda \to (zw)\cdot ef.
    \]
    In particular, $zw=1$ as $\Sigma$ is Hausdorff. By \ref{it:IZS:saturated}, the net $(e_\lambda',f_\lambda')$ is in $\Sigma_{i}\bfpsr \Sigma_{j}$, converges to $(e,f)$, and since $zw=1$, it satisfies
    \[
    e_\lambda' f_\lambda' = (z\cdot e_\lambda)(w\cdot f_\lambda)= (zw)\cdot (e_\lambda f_\lambda) = \sigma_{\lambda}.
    \]
    This proves that the surjective map $\Sigma_{i}\bfpsr \Sigma_{j}\to\Sigma, (e,f)\mapsto ef,$ is open.

    \ref{it:decomposing e2e1:existence} Since $(f, {e})\in \Sigma_{2}\bfpsr \Sigma_{1}$, we see that    $\pi(f{e})$ is an element of $\cG_{2}\cdot \cG_{1}$. Since $\cG=\cG_{1}\bowtie\cG_{2}$, 
    there exist unique $g_{i}\in \cG_{i}$ such that $\pi(f{e})=g_{1}g_{2}$. We can let $f_{i}\in\Sigma_{i}$ be any lifts under $\pi_{i}$ of $g_{i}$. Since $\pi(f{e})=g_{1}g_{2}=\pi(f_{1}f_{2})$, there exists a unique $z\in \mathbb{T}$ such that $fe=(z\cdot f_{1})f_{2}$. Note that $ e'\coloneq  z\cdot f_{1}\in \Sigma_{1}$ and $ f'\coloneq  f_{2}\in\Sigma_{2}$, so we have written $fe$ as a product of an element in $\Sigma_{1}$ and $\Sigma_{2}$, as claimed.

    \ref{it:decomposing e2e1:uniqueness} We clearly have $\supseteq$. For the converse containment, assume $fe=e'' f''$.
    Then 
    $e'' f''=e'f'$, so that
    ${e'}\inv e'' = f'{f''}\inv$ is an element of $\Sigma_{1}\cap\Sigma_{2}$. By \ref{it:IZS:cap}, this means that there exists $z\in\mathbb{T}$ such that $z\cdot  e'= e''$ and $z\cdot  f''= f'$.
\end{proof}

As in the case of 
any construction of a \ZS\ product
(e.g., \cite[Proposition 7]{BPRRW:ZS} and \cite[Proposition 3.13]{MS:2023:ZS-pp}), the external and internal \ZS\ structures are, in fact,
two sides of the same coin.
The same is true for our \ZS\ twists, as
is summarized in the following main theorem of this section. 

\begin{theorem}\label{thm:IZS=MT}
There is a one-to-one correspondence between internal \ZS\ structures (Definition~\ref{def:IZS}) and matched twists (Definition~\ref{def:MT}). To be more precise:
\begin{enumerate}[label=\textup{(\arabic*)}]
    \item\label{it:thm:from MT to IZS} A pair $(\Sigma_{1},\Sigma_{2})$ of matched twists
    with \wordforPhi\ $\Phi$
    is an internal \ZS\ structure for the external \ZS\ twist $(\Sigma_{1} \ZST \Sigma_{2},\jmath,\Pi_{1,2})$ of Theorem~\ref{thm:external ZS-product}.
    \item\label{it:thm:from IZS to MT} 
    For any internal \ZS\ structure $(\Sigma_{1},\Sigma_{2})$ of a twist $(\Sigma,\jmath,\pi)$
    {%
    over a groupoid $\cG$,
    }
    {%
    the map $\Phi\colon \Sigma_{2}\ast_{\mathbb{T}}\Sigma_{1}\to \Sigma_{1}\ast_{\mathbb{T}}\Sigma_{2}$ determined by
    \[
        \Phi( f \asttwoone e )
        =
        e' \astonetwo f'
        \text{ whenever }
        fe=e'f'
        \text{ in } \Sigma,
    \]
    is well-defined. It is furthermore the unique \wordforPhi\
    }
    for $(\Sigma_{1},\Sigma_{2})$
    {%
    that covers $\cG=\pi(\Sigma_{1})\bowtie\pi(\Sigma_{2})$
    }%
    such that the map
    \[
        \Sigma_{1}\ZST \Sigma_{2}\to \Sigma,\quad
        e\astonetwo f \mapsto ef,
    \]
    is an isomorphism of twists, meaning that 
    the following diagram commutes.
    {%
    \[
    \begin{tikzcd}[column sep = huge, row sep = small]
    &\Sigma_{1}\ZST \Sigma_{2}\ar[r,"\Pi_{1,2}"]\ar[dd,dashed]& \pi(\Sigma_{1})\bowtie\pi(\Sigma_{2})
    \ar[dd,"\cong"]
    & \ar[l, phantom, "\ni"] (x,g)\ar[dd, mapsto]
    \\
    \mathbb{T}\times\cU 
    \ar[ru,"\jmath"]
    \ar[rd,"\jmath"']&    
    &&
    \\    
   &
    \Sigma
    \ar[r,"\pi"']& \cG & \ar[l, phantom, "\ni"]  xg
    \end{tikzcd}
    \]
    }%
\end{enumerate}  
\end{theorem}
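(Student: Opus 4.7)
For part~\ref{it:thm:from MT to IZS}, I identify $\Sigma_i$ with its closed image $\iota^i_{1,2}(\Sigma_i)\subseteq \Sigma_{1}\ZST\Sigma_{2}$ via Lemma~\ref{lem:iotas}, which is a subgroupoid by Lemma~\ref{lem:external:r and s}. Condition~\ref{it:IZS:product} reduces to the computation
\[
    \iota^1_{1,2}(e)\cdot \iota^2_{1,2}(f) = (e\astonetwo s(e))(r(f)\astonetwo f) \overset{\eqref{eq:EZS:multiplication}}{=} e\bullet \Phi(s(e)\asttwoone r(f)) \bullet f \overset{\ref{it:MT:diagram,top}}{=} e\astonetwo f,
\]
using that $s(e)=r(f)=:u$ and $\Phi(u\asttwoone u)=u\astonetwo u$. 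For~\ref{it:IZS:cap}, an intersection element $(e\astonetwo s(e))=(r(f)\astonetwo f)$ forces, via the defining $\mathbb{T}$-equivalence relation on $\Sigma_1\bfpsr\Sigma_2$, both $e$ and $f$ into $\jmath_i(\mathbb{T}\times\cU)$, which under the identification of Theorem~\ref{thm:external ZS-product} coincides with $\jmath(\mathbb{T}\times\cU)$.

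For part~\ref{it:thm:from IZS to MT}, the heart of the argument is to define $\Phi(f\asttwoone e)\coloneq e'\astonetwo f'$ whenever $fe=e'f'$ in $\Sigma$. Lemma~\ref{lem:decomposing e_{2}e_{1}}\ref{it:decomposing e2e1:existence} guarantees existence and \ref{it:decomposing e2e1:uniqueness} gives the uniqueness up to a diagonal $\mathbb{T}$-twist, so $\Phi$ descends to a well-defined map on $\Sigma_{2}\ast_{\mathbb{T}}\Sigma_{1}$; well-definedness on the quotient side (i.e.\ $\mathbb{T}$-invariance in the input) uses that $(z\cdot f)(\bar z\cdot e)=fe$. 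Swapping the roles of $\Sigma_{1}$ and $\Sigma_{2}$ (via~\ref{it:IZS:order}) yields an obvious inverse, so $\Phi$ is a bijection; an application of Fell's criterion, combining openness of the two quotient maps (Lemma~\ref{lem:external:top on quotient}) with openness of multiplication (Lemma~\ref{lem:decomposing e_{2}e_{1}}\ref{it:IZS:mult open}), will show $\Phi$ is a homeomorphism. The Conditions~\ref{it:MT:T equivariant}--\ref{it:MT:associativity, left} follow by elementary manipulations in $\Sigma$: \ref{it:MT:T equivariant} from $(z\cdot f)e=z\cdot(fe)=(z\cdot e')f'$; \ref{it:MT:source and range} from $r(fe)=r(f)$ and $s(fe)=s(e)$; \ref{it:MT:diagram,top} from $r(e)\cdot e = e\cdot s(e)$ and $f\cdot s(f)=r(f)\cdot f$; and \ref{it:MT:associativity, right}, \ref{it:MT:associativity, left} from the associativity $f(e\tilde e)=(fe)\tilde e = e'(f'\tilde e)$ and its mirror. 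Finally, applying $\pi$ to $fe=e'f'$ and invoking the uniqueness of the \ZS-decomposition in $\cG=\cG_{1}\bowtie\cG_{2}$ (Lemma~\ref{lem:2 unique decomp in ZS}) shows that $\Phi$ covers $\cG_{1}\bowtie\cG_{2}$ in the sense of Equation~\eqref{eq:Pi and Phi}.

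For the twist isomorphism $\Lambda\colon \Sigma_{1}\ZST\Sigma_{2}\to\Sigma$, $e\astonetwo f\mapsto ef$, well-definedness on the quotient is immediate since $(z\cdot e)(\bar z\cdot f)=ef$. Homomorphism follows directly from the definition of $\Phi$: if $\Phi(f\asttwoone e')=a\astonetwo b$, i.e.\ $fe'=ab$, then $\Lambda((e\astonetwo f)(e'\astonetwo f'))=\Lambda(ea\astonetwo bf')=eabf'=e(fe')f'=(ef)(e'f')$. Continuity is inherited from continuity of multiplication in $\Sigma$, and openness from Lemma~\ref{lem:decomposing e_{2}e_{1}}\ref{it:IZS:mult open} together with openness of the quotient map. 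Surjectivity is~\ref{it:IZS:product}, and injectivity follows from~\ref{it:IZS:cap}: if $ef=e'f'$ then $(e')^{-1}e=f'f^{-1}\in\Sigma_{1}\cap\Sigma_{2}=\jmath(\mathbb{T}\times\cU)$, giving the required $\mathbb{T}$-scalar relating $(e,f)$ and $(e',f')$. Commutativity of the diagram is a direct check using $\pi(ef)=\pi(e)\pi(f)$. For uniqueness of $\Phi$, any \wordforPhi\ $\Phi'$ covering $\cG_{1}\bowtie\cG_{2}$ for which $\Lambda$ is a homomorphism must satisfy $\Lambda\circ (e\astonetwo f)(e'\astonetwo f')=(ef)(e'f')$; comparing with the formula~\eqref{eq:EZS:multiplication} forces $\Phi'(f\asttwoone e')=a\astonetwo b$ with $ab=fe'$, which uniquely determines $\Phi'$ by Lemma~\ref{lem:decomposing e_{2}e_{1}}\ref{it:decomposing e2e1:uniqueness}.

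\textbf{Main obstacle.} The subtle point is establishing that $\Phi$ is a \emph{homeomorphism} (not merely a bijection): continuity of the defining assignment $fe\mapsto (e',f')$ is not transparent because the choice of $(e',f')$ is only unique up to $\mathbb{T}$. The cleanest path is to note that the triangle $q_{2,1} \to \Sigma \leftarrow q_{1,2}$ of quotient maps composed with multiplication factors both $\Pi_{2,1}$ and $\Lambda$ through $\Phi$, and then deduce openness and continuity of $\Phi$ via Fell's criterion from the already-established openness of the multiplication map (Lemma~\ref{lem:decomposing e_{2}e_{1}}\ref{it:IZS:mult open}) and of the two quotient maps (Lemma~\ref{lem:external:top on quotient}). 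All remaining verifications are routine once $\Phi$ is firmly in place.
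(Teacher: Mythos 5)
Your proposal is correct and follows essentially the same route as the paper: part~\ref{it:thm:from MT to IZS} via the identifications of Lemmas~\ref{lem:iotas} and~\ref{lem:external:r and s} and the computation $\iota^{1}_{1,2}(e)\iota^{2}_{1,2}(f)=e\astonetwo f$, and part~\ref{it:thm:from IZS to MT} by defining $\Phi$ from the decomposition $fe=e'f'$ (existence and $\mathbb{T}$-uniqueness from Lemma~\ref{lem:decomposing e_{2}e_{1}}) and deducing that it is a homeomorphism from openness of the quotient maps and of the multiplication map~\ref{it:IZS:mult open}. The paper carries out the continuity argument as an explicit net/lifting computation rather than citing Fell's criterion abstractly, but this is the same mechanism you identify in your ``main obstacle'' paragraph.
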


In Part~\ref{it:thm:from MT to IZS}, one could be pedantic and say that $(\iota^{1}_{1,2}(\Sigma_{1}),\iota^{2}_{1,2}(\Sigma_{2}))$ is the internal \ZS\ structure for $(\Sigma_{1} \ZST \Sigma_{2},\jmath,\Pi_{1,2})$, but courtesy of Lemmas~\ref{lem:iotas} and~\ref{lem:external:r and s}, we can identify $\Sigma_{i}$ with its homeomorphic, closed image $\iota^{i}_{1,2}(\Sigma_{i})$.

\begin{proof}   
    \ref{it:thm:from MT to IZS} Suppose $(\Sigma_{1},\Sigma_{2})$ is a pair of matched twists
    {%
    with \wordforPhi\ $\Phi$.
    }%
    By Lemmas~\ref{lem:iotas} and~\ref{lem:external:r and s}, $\Sigma_{1},\Sigma_{2}$ are (homeomorphic to) closed subgroupoids of  $\Lambda\coloneq  \Sigma_{1}\ZST \Sigma_{2}$. Since
    \begin{align*}
        e\astonetwo f
        &=        
        e\bullet (s(e)\astonetwo r(f))\bullet f
        =
        e\bullet \Phi(s(e) \asttwoone r(f))\bullet f 
        &&\text{by Remark~\ref{rmk:Phi for units}}
        \\
        &=
        (e\astonetwo s(e))(r(f)\astonetwo f)
        =
        \iota^{1}_{1,2}(e)
        \iota^{2}_{1,2}(f),
    \end{align*}    we see that $\Lambda=\iota^{1}_{1,2}(\Sigma_{1})\iota^{2}_{1,2}(\Sigma_{2})$, i.e., Condition~\ref{it:IZS:product} is satisfied. Moreover,
    an element of $\Lambda$ is in $    \iota^{1}_{1,2}(\Sigma_{1})\cap \iota^{2}_{1,2}(\Sigma_{2})
    $ if it can be written both as $e\astonetwo s(e)$ for some $e\in\Sigma_{1} $ and as $r(e)\astonetwo f$ for some $f\in \Sigma_{2}$; in particular, there exists $z\in\mathbb{T}$ such that $(e, s(e))= (z\cdot r(e), \overline{z}\cdot f)$. We conclude that the ranges and sources of $e$ and $f$ are all the same element, say $u$, so that we can write $e=z\cdot u=f$ and conclude that the element we started with can be written as $e\astonetwo s(e)=z\cdot (u\astonetwo u)=\jmath (z,u)$. This proves $ \iota^{1}_{1,2}(\Sigma_{1})\cap \iota^{2}_{1,2}(\Sigma_{2})
    \subseteq \jmath(\mathbb{T}\times\cU)$, i.e., Condition~\ref{it:IZS:cap}.

    \ref{it:thm:from IZS to MT} Suppose $(\Sigma_{1},\Sigma_{2})$ is an internal \ZS\ structure of a twist $(\Sigma,\jmath,\pi)$. Because of \ref{it:decomposing e2e1:existence}, we may use the Axiom of Choice to define a map $\Phi_0\colon \Sigma_{2}\bfpsr \Sigma_{1}\to \Sigma_{1}\bfpsr \Sigma_{2}$ by letting it send a tuple $(f,e)$ to any element $(e',f')$ for which $fe=e'f'$ in $\Sigma$. Now, if $\Phi(z\cdot f,\overline{z}\cdot e)=(\tilde{\sigma}_{1}, \tilde{\sigma}_{2})$, then
    \begin{align*}
        \tilde{\sigma}_{1}\tilde{\sigma}_{2}
        =
        (z\cdot {f})(\overline{z}\cdot e)
        =
        fe
        =
        e'f',        
    \end{align*}
    so that ${e'}\inv \tilde{\sigma}_{1}=f'\tilde{\sigma}_{2}\inv \in \Sigma_{1}\cap\Sigma_{2}$. By \ref{it:IZS:cap}, this means there exists $w\in\mathbb{T}$ such that $w\cdot e'=\tilde{\sigma}_{1}$ and $w\cdot\tilde{\sigma}_{2}= f'$. In other words, $\tilde{\sigma}_{1}\astonetwo \tilde{\sigma}_{2} =  e'\astonetwo  f'$ in $\Sigma_{1}\ast_{\mathbb{T}}\Sigma_{2}$.
    This proves that $\Phi_0$ induces a map
    \[
    \Phi\colon \Sigma_{2}\ast_{\mathbb{T}}\Sigma_{1}\to \Sigma_{1}\ast_{\mathbb{T}}\Sigma_{2}.
    \]
    Thanks to \ref{it:decomposing e2e1:existence}, this map is injective.
    And while $\Phi_0$ was not unique, this map is.
    
    To see that $\Phi$ is continuous, assume that $\{\xi_\lambda\}$ is a net in $\Sigma_{2}\ast_{\mathbb{T}} \Sigma_{1}$ that converges to $\xi$; it suffices to show that a subnet of $\{\Phi(\xi_\lambda)\}$ converges to $\Phi(\xi)$. Pick any $(e,f)\in \Sigma_{2}\bfpsr \Sigma_{1}$ such that $\xi=e
    \asttwoone
     f$. Since the quotient map $q_{2,1}$ is open (Lemma~\ref{lem:external:top on quotient}) and since we can pass to subnets, we can without loss of generality assume that there exists a net $\{(e_\lambda, f_\lambda)\}$ in $\Sigma_{2}\bfpsr \Sigma_{1}$ such that $\xi_\lambda=e_\lambda
    \asttwoone
    f_\lambda$ and $(e_\lambda,f_\lambda)\to (e,f)$; in particular
    \(
    e_\lambda f_\lambda \to ef
    \). 
    Now pick any $(\sigma,\mu)\in \Sigma_{1}\bfpsr \Sigma_{2}$ such that $ef=\sigma\mu$.
    Since the multiplication map $\Sigma_{1}\bfpsr\Sigma_{2}\to \Sigma$ is surjective by \ref{it:IZS:product} and open by
    \ref{it:IZS:mult open} and since we can, again, pass to subnets, can without loss of generality lift the convergent net $\{e_\lambda f_\lambda\}$, i.e., there exists a net $\{(\sigma_\lambda,\mu_\lambda)\}$ in $\Sigma_{1}\bfpsr \Sigma_{2}$ such that $(\sigma_\lambda,\mu_\lambda) \to (\sigma,\mu)$ and $\sigma_\lambda \mu_\lambda = e_\lambda f_\lambda$. The latter means exactly that $\Phi(\xi_\lambda) = \sigma_\lambda \astonetwo \mu_\lambda$, and the former implies that $\Phi(\xi_\lambda)\to \sigma\astonetwo \mu = \Phi(\xi)$. This proves that $\Phi$ is continuous.

    Since $(\Sigma_{2},\Sigma_{1})$ is also a matched pair by~\ref{it:IZS:order}, the same argument yields an injective continuous map $\Sigma_{1}\ast_{\mathbb{T}}\Sigma_{2}\to \Sigma_{2}\ast_{\mathbb{T}}\Sigma_{1}$ which, by construction, is inverse to $\Phi$, so $\Phi$ is a homeomorphism. We claim that $\Phi$ is a \wordforPhi, i.e., satisfies the conditions in Definition~\ref{def:MT}.

    If $\Phi(e_{2}\asttwoone e_{1})=\sigma_{1}\astonetwo\sigma_{2}$, then
    $e_{2}e_{1}=\sigma_{1}\sigma_{2}$ 
    by definition of $\Phi$. This implies
    $(z\cdot e_{2})e_{1}=(z\cdot \sigma_{1})\sigma_{2}$,
    so that 
    $\Phi((z\cdot e_{2})\asttwoone e_{1})=(z\cdot \sigma_{1})\astonetwo\sigma_{2}=z\cdot\Phi(e_{2}\asttwoone e_{1})$, proving Condition~\ref{it:MT:T equivariant}.
    Condition~\ref{it:MT:source and range} follows from $r(f)=r(fe)=r(e'f')=r(e')$ and $s(e)=s(fe)=s(e'f')=s(f')$.
    Since $es(e)=e=r(e)e$, we further get Condition~\ref{it:MT:diagram,top}. Lastly, if $f_{i}\in\Sigma_{i}$ with appropriate range and source, then since $(e_{2}e_{1})f_{1}=\sigma_{1}(\sigma_{2}f_{1})$ and $f_{2}(e_{2}e_{1})=(f_{2}\sigma_{1})\sigma_{2}$, we conclude that
    \begin{align*}
        \Phi(e_{2}\asttwoone e_{1}f_{1})=\sigma_{1}\bullet \Phi(\sigma_{2}\asttwoone f_{1})
        \quad\text{and}\quad
        \Phi(f_{2}e_{2}\asttwoone e_{1})=\Phi(f_{2}\asttwoone \sigma_{1})\bullet \sigma_{2},
    \end{align*}
    which proves Conditions~\ref{it:MT:associativity, right} and~\ref{it:MT:associativity, left}, so that
    $\Phi$ is indeed a \wordforPhi\ and $(\Sigma_{1},\Sigma_{2})$ is
    a pair of matched twists.

    It remains to prove the claim about the map $e\astonetwo f\mapsto ef$. It is well defined since centrality of the $\mathbb{T}$-action on $\Sigma$ implies
    \(
        e_{1}e_{2} = (z\cdot e_{1})(\overline{z}\cdot e_{2})
    \). It is now easy to check that the map is a groupoid homomorphism. It is a bijection by Lemma~\ref{lem:decomposing e_{2}e_{1}}\ref{it:decomposing e2e1:uniqueness} (the only degree of freedom was the choice of $\mathbb{T}$). 
    The isomorphism is continuous (respectively open) since the quotient map is open (respectively continuous) and the map $\Sigma_{1}\bfpsr \Sigma_{2}\to\Sigma, (e,f)\mapsto ef$, is continuous (respectively open by \ref{it:IZS:mult open}). It is obvious that the diagram commutes since $\pi$ is a homomorphism.
\end{proof}

As mentioned earlier, the \ZS\ product $\cG_{1}\bowtie\cG_{2}$ of two groupoids is canonically isomorphic to $\cG_{2}\bowtie\cG_{1}$. A similar results holds for twists:

\begin{corollary}[cf.\ {\cite[Corollary 9]{BPRRW:ZS}}]
Suppose $(\Sigma_{1}, \Sigma_{2})$ is a pair of matched twists with \wordforPhi\ $\Phi\colon \Sigma_{2} \ast_{\mathbb{T}} \Sigma_{1} \to \Sigma_{1} \ast_{\mathbb{T}} \Sigma_{2}$. Then $(\Sigma_{2}, \Sigma_{1})$ is a pair of matched twists with \wordforPhi\ $\Phi\inv$. Moreover, $\Phi$ and $\Phi\inv$
are groupoid isomorphisms that make the diagram
\[
    \begin{tikzcd}[column sep = huge, row sep = small]
    &\Sigma_{1}\ZST \Sigma_{2}\ar[rd,"\Pi_{1,2}"]
    \ar[dd,dashed, shift left = 1ex,"\Phi\inv"]&
    \\
    \mathbb{T}\times\cU 
    \ar[ru,"\jmath"]
    \ar[rd,"\jmath"']&    
    &
    \cG_{1}\bowtie\cG_{2}
    \\    
   &
    \Sigma_{2}\ZST[\Phi\inv] \Sigma_{1}
    \ar[ru,"\Pi_{2,1}"']\ar[uu,dashed,shift left = 1ex, "\Phi"]&
    \end{tikzcd}
    \]
commute. In other words, the associated external \ZS\ twists are isomorphic as twists.
\end{corollary}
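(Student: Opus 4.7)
The plan is to derive the corollary from the internal/external correspondence established in Theorem~\ref{thm:IZS=MT}, avoiding a direct verification of Conditions~\ref{it:MT:T equivariant}--\ref{it:MT:associativity, left} for $\Phi\inv$. By part~\ref{it:thm:from MT to IZS} of that theorem, the matched pair $(\Sigma_{1},\Sigma_{2})$ exhibits the external twist $\Sigma_{1}\ZST\Sigma_{2}$ as an internal \ZS\ product, upon identifying $\Sigma_{i}$ with $\iota^{i}_{1,2}(\Sigma_{i})$. Lemma~\ref{lem:decomposing e_{2}e_{1}}\ref{it:IZS:order} then guarantees that the swapped pair $(\Sigma_{2},\Sigma_{1})$ is also an internal \ZS\ structure for the same twist. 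Applying part~\ref{it:thm:from IZS to MT} of Theorem~\ref{thm:IZS=MT} to $(\Sigma_{2},\Sigma_{1})$ then produces a unique \wordforPhi\ $\Phi'\colon \Sigma_{1}\ast_{\mathbb{T}}\Sigma_{2}\to \Sigma_{2}\ast_{\mathbb{T}}\Sigma_{1}$ covering $\cG_{2}\bowtie\cG_{1}$, together with an isomorphism of twists $\Sigma_{2}\ZST[\Phi']\Sigma_{1}\to \Sigma_{1}\ZST\Sigma_{2}$ sending $f\asttwoone e$ to the product $fe$ formed in $\Sigma_{1}\ZST\Sigma_{2}$ via the inclusions.

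The key step is to identify $\Phi'$ with $\Phi\inv$. Fix $f\asttwoone e\in \Sigma_{2}\ast_{\mathbb{T}}\Sigma_{1}$ and write $\Phi(f\asttwoone e)=e'\astonetwo f'$. The multiplication formula~\eqref{eq:EZS:multiplication} together with Condition~\ref{it:MT:source and range} yields
\[
\iota^{2}_{1,2}(f)\,\iota^{1}_{1,2}(e)
=(r(f)\astonetwo f)(e\astonetwo s(e))
=r(f)\bullet(e'\astonetwo f')\bullet s(e)
=e'\astonetwo f'
=\iota^{1}_{1,2}(e')\,\iota^{2}_{1,2}(f').
\]
Hence $fe=e'f'$ inside $\Sigma_{1}\ZST\Sigma_{2}$, and the defining property of $\Phi'$ in Theorem~\ref{thm:IZS=MT}\ref{it:thm:from IZS to MT} forces $\Phi'(e'\astonetwo f')=f\asttwoone e$, i.e.\ $\Phi'=\Phi\inv$. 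In particular, $(\Sigma_{2},\Sigma_{1})$ is a pair of matched twists with \wordforPhi\ $\Phi\inv$.

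Finally, under this identification, the isomorphism $\Sigma_{2}\ZST[\Phi\inv]\Sigma_{1}\to \Sigma_{1}\ZST\Sigma_{2}$ sends $f\asttwoone e$ to $fe$, which by the displayed computation is exactly $\Phi(f\asttwoone e)$; thus the isomorphism is $\Phi$ itself, with inverse $\Phi\inv$. Commutativity of the diagram reduces to two easy checks: $\Phi\circ\jmath=\jmath$ follows from Remark~\ref{rmk:Phi for units} and $\mathbb{T}$-equivariance~\ref{it:MT:T equivariant}; and $\Pi_{1,2}\circ\Phi=\Pi_{2,1}$ is precisely Equation~\eqref{eq:Pi and Phi}, composed with the canonical identification $\cG_{2}\bowtie\cG_{1}\cong\cG_{1}\bowtie\cG_{2}$ from \cite[Corollary 8]{BPRRW:ZS}. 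The only mildly delicate point in the argument is keeping track of the identification $\Sigma_{i}\cong\iota^{i}_{1,2}(\Sigma_{i})$ when translating the internal decomposition $fe=e'f'$ back into the map $\Phi$; once that bookkeeping is in place, the rest is an immediate application of the correspondence theorem.
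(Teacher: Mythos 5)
Your proposal is correct and follows essentially the same route as the paper's own proof: both pass through Theorem~\ref{thm:IZS=MT}\ref{it:thm:from MT to IZS}, swap the internal structure via~\ref{it:IZS:order}, return through Theorem~\ref{thm:IZS=MT}\ref{it:thm:from IZS to MT}, and identify $\Phi'$ with $\Phi\inv$ by the computation $\iota^{2}_{1,2}(f)\,\iota^{1}_{1,2}(e)=\Phi(f\asttwoone e)$. Your extra verification of the diagram's commutativity is a harmless elaboration of what the paper leaves implicit.
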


\begin{proof}
    Let us write $\Sigma\coloneq  \Sigma_{1}\ZST \Sigma_{2}$.
    Since $(\Sigma_{1},\Sigma_{2})$ is a pair of matched twists, it follows from Theorem~\ref{thm:IZS=MT}\ref{it:thm:from MT to IZS} that $(\Sigma_{1},\Sigma_{2})$ is an internal \ZS\ structure for $(\Sigma,\jmath,\Pi_{1,2})$. 
    Thus by~\ref{it:IZS:order}, $(\Sigma_{2},\Sigma_{1})$ is also an internal \ZS\ structure for $(\Sigma,\jmath,\Pi_{1,2})$. By Theorem~\ref{thm:IZS=MT}\ref{it:thm:from IZS to MT}, 
    this implies that $(\Sigma_{2},\Sigma_{1})$ is a matched pair; it remains to find its \wordforPhi.
    
    Since the multiplication of $\Sigma$ given at~\eqref{eq:EZS:multiplication} is constructed exactly such that 
    \[\iota^{2}_{1,2}(f)\iota^{1}_{1,2}(e_{1})=(r(f)\astonetwo f)(e\astonetwo s(e)) = \Phi(f\asttwoone e),
    \]
    we can conclude two things: Since the \wordforPhi\ 
    $\Phi'\colon %
    \Sigma_{1}\ast_{\mathbb{T}}\Sigma_{2}\to \Sigma_{2}\ast_{\mathbb{T}}\Sigma_{1}$
    associated to the matched pair $(\Sigma_{2},\Sigma_{1})$ is  determined by sending  $\sigma \astonetwo \mu 
    \in \Sigma_{1}\ast_{\mathbb{T}}\Sigma_{2}
    $ to the unique element $f \astonetwo e 
    \in \Sigma_{2}\ast_{\mathbb{T}}\Sigma_{1}
    $  for which $\sigma \mu =f e $ in $\Sigma$, it follows from
    $\sigma \mu = fe = \Phi(f\asttwoone e)$ that 
    $\Phi'$
    must indeed be given by $\Phi\inv$. Secondly, the isomorphism from $\Sigma_{2}\ast_{\mathbb{T}}\Sigma_{1}$ to $\Sigma=\Sigma_{1}\ZST\Sigma_{2}$ that Theorem~\ref{thm:IZS=MT}\ref{it:thm:from IZS to MT} gives us, is exactly the map~$\Phi$.
\end{proof}

\section{\ZS\ Product of $2$-cocycles}\label{sec:cocycles}

One natural way of constructing 
twists over
groupoids is via $2$-cocycles. 
Recall that a \emph{(normalized) $2$-cocycle} on a groupoid $\cG$ is a continuous map $c\colon \cG^{(2)}\to\mathbb{T}$ such that $c(r(g), g)=1=c(g, s(g))$ and
such that
the \emph{cocycle identity} holds:
\begin{equation}\label{eq:cocycle identity}
    c(g, hk)c(h,k)=c(gh, k)c(g,h)\quad\text{ for all }  (g,h,k)\in\cG^{(3)}.  
\end{equation}
Such a $2$-cocycle defines a twist $\Sigma_{c}$ as follows. As a topological space, $\Sigma_{c}$ is just $\mathbb{T}\times \cG$, and its composition and inversion is defined for $(g,h)\in\cG^{(2)}$ and $z,w\in\mathbb{T}$ by 
\[(z,g)(w,h) = ( c(g,h) zw, gh)
\quad\text{and}\quad
(z,g)\inv = \left(\overline{z \, c(g,g\inv)}, g\inv\right).\]
If we let $\cU=\cG\z$, then the central groupoid extension is given by
\[
    \mathbb{T}\times \cU    \overset{\mathrm{incl}}{\longrightarrow} \Sigma_{c} \overset{\mathrm{pr}_{2}}{\longrightarrow} \cG
\]

In this section, we study the \ZS\ product of two twisted groupoids arising from $2$-cocycles. We start with a matched pair $(\cG_{1}, \cG_{2})$ of groupoids, each of which carries a continuous $2$-cocycle $c_{i}$.

\begin{definition}\label{def:connector}
We call a
continuous
map $\varphi\colon\cG_{2}\bfpsr \cG_{1}\to\mathbb{T}$ 
a \emph{\wordforphi\ for the pair $(c_{1},c_{2})$} if the following hold.
\begin{enumerate}[label=\textup{(CC\arabic*)}]
    \item\label{it:connector:normalized} For all $g\in \cG_{2}$ and $x\in \cG_{1}$, $\varphi(r(x), x)=1=\varphi(g, s(g))$.
    \item\label{it:connector:cocycle cond,c1} For all $g\in \cG_{2}$ and $(x,y)\in \cG_{1}^{(2)}$ such that $s(g)=r(x)$, we have
    \begin{align*}\varphi(g,xy)c_{1}(x,y)=\varphi(g,x)\varphi(g\HrightG x, y)c_{1}(g\HleftG x, (g\HrightG x)\HleftG y)
    .
    \end{align*}
    \item\label{it:connector:cocycle cond,c2} For all $(g,h)\in \cG_{2}^{(2)}$ and $x\in \cG_{1}$ such that $s(h)=r(x)$, we have
    \[\varphi(gh, x)c_{2}(g,h) = \varphi(h,x)\varphi(g, h\HleftG x) c_{2}(g\HrightG (h\HleftG x), h\HrightG x).
    \]
\end{enumerate}
\end{definition}

Definition~\ref{def:connector} should be compared to \cite[Definition~7.12]{MS:2023:ZS-pp}; a \wordforphi\ $\varphi$ corresponds in spirit exactly to their map $\varphi_{1,1}$, even if their set-up is rather different from ours.

\begin{remark}[Sanity checks]
    The reader is encouraged to check that the components in Conditions~\ref{it:connector:cocycle cond,c1} and~\ref{it:connector:cocycle cond,c2} make sense. For~\ref{it:connector:cocycle cond,c1}, for example, one requires 
    Conditions~\ref{item:ZS8}, \ref{item:ZS2}, and~\ref{item:ZS7}
    of the matched pair $(\cG_{1},\cG_{2})$ as listed in 
    Section~\ref{ssec:ZS of gpds}.
\end{remark}

\begin{example}\label{ex:connector:trivial}
    If $c_{1},c_{2}$ are the trivial $2$-cocycles, so constant $1$, then the constant map $\varphi(g,x)=1$ is always a \wordforphi.
\end{example}

\begin{example}\label{ex:connector:rotation 2-cocycles}
    Let $\cG_{1}=\cG_{2}=\mathbb{Z}$ act trivially on each other, and fix $\theta\in\mathbb{R}$.  The map $\varphi\colon \cG_{2}\times  \cG_{1} \to\mathbb{T}$ given by $(m,n)\mapsto \mathsf{e}^{2\pi i \theta mn}$ is a \wordforphi\ for the trivial $2$-cocycles on $\cG_{1}$ and $\cG_{2}$.
\end{example}

We now prove that matched twists from $2$-cocycles come precisely from \wordforphi s.
\begin{proposition}[External \ZS\ twist for $2$-cocycles]\label{prop:EZS twist:2-cocycles}
 Suppose $(\cG_{1}, \cG_{2})$ is a matched pair of groupoids with continuous $2$-cocycles $c_{i}$. Then $(\Sigma_{c_{1}}, \Sigma_{c_{2}})$ is a pair of matched twists (Definition~\ref{def:MT}) if and only if there exists a \wordforphi\ for $(c_{1},c_{2})$ (Definition~\ref{def:connector}). Indeed, the \wordforPhi\ $\Phi\colon \Sigma_{c_{2}}\ast_{\mathbb{T}}\Sigma_{c_{1}}\to \Sigma_{c_{1}}\ast_{\mathbb{T}}\Sigma_{c_{2}}$
 that covers $\cG_{1}\bowtie\cG_{2}$
 and the \wordforphi\ $\varphi\colon \cG_{2}\bfpsr \cG_{1}\to\mathbb{T}$ are related by the formula
 \begin{equation}     
 \label{eq:Phi and phi}
 \Phi\bigl((1,g)\asttwoone (1,x)\bigr) = (1,g\HleftG x) \astonetwo (\varphi(g,x),g\HrightG x)
 \end{equation}
 for all $(g,x)\in \cG_{2}\bfpsr \cG_{1}$. In this case, the map
 \[
    c\colon (\cG_{1}\bowtie\cG_{2})^{(2)}\to \mathbb{T},
    \;
    c\bigl((x_{1},g_{1}),(x_{2},g_{2})\bigr)=
    c_{1}(x_{1},g_{1}\HleftG x_{2})
    \
    \varphi(g_{1},x_{2})
    \
    c_{2}(g_{1}\HrightG x_{2}, g_{2}),
 \]
 is a $2$-cocycle on $\cG_{1}\bowtie\cG_{2}$, and the twist $\Sigma_c$ it induces is 
 canonically isomorphic
 to the
 external \ZS\ twist $\Sigma_{c_{2}}\ZST \Sigma_{c_{1}}$ from Theorem~\ref{thm:external ZS-product}.
\end{proposition}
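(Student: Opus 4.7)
The plan is to prove the three assertions (cocycle connectors correspond to \wordforPhi s, the explicit formula~\eqref{eq:Phi and phi}, and the cocycle interpretation of the resulting twist) in sequence, by exploiting the product trivialisations $\Sigma_{c_i}=\mathbb{T}\times\cG_i$ and the fact that any \wordforPhi\ covering $\cG_{1}\bowtie\cG_{2}$ is determined by its values on the elements $(1,g)\asttwoone(1,x)$.

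\textbf{Forward direction.} Assume a \wordforPhi~$\Phi$ covering $\cG_{1}\bowtie\cG_{2}$ exists. By~\ref{it:MT:T equivariant} it is determined by its restriction to $\{(1,g)\asttwoone(1,x):(g,x)\in \cG_{2}\bfpsr\cG_{1}\}$. Since $\Phi$ covers the given matched pair,  $\Pi_{1,2}\Phi\bigl((1,g)\asttwoone (1,x)\bigr)=(g\HleftG x,g\HrightG x)$, so every such image lies in $\Pi_{1,2}^{-1}(g\HleftG x,g\HrightG x)$. Using that we have quotiented by the diagonal $\mathbb{T}$-action, each such fibre element has a unique representative of the form $(1,g\HleftG x)\astonetwo(\varphi(g,x),g\HrightG x)$. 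This defines a continuous map $\varphi\colon \cG_{2}\bfpsr\cG_{1}\to\mathbb{T}$ satisfying~\eqref{eq:Phi and phi}. Condition~\ref{it:MT:diagram,top}, specialised to $e=(1,x)$ and $f=(1,g)$, directly gives~\ref{it:connector:normalized}. Conditions~\ref{it:MT:associativity, right} and~\ref{it:MT:associativity, left}, specialised analogously to $\tilde{e}=(1,y)$ and $\tilde{f}=(1,h)$ and expanded using the multiplication of $\Sigma_{c_i}$, yield exactly~\ref{it:connector:cocycle cond,c1} and~\ref{it:connector:cocycle cond,c2} after comparing $\mathbb{T}$-components on both sides.

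\textbf{Converse direction.} Given a \wordforphi~$\varphi$, define $\Phi$ by extending~\eqref{eq:Phi and phi} $\mathbb{T}$-equivariantly, i.e., $\Phi\bigl((z,g)\asttwoone (w,x)\bigr)=zw\cdot\Phi\bigl((1,g)\asttwoone (1,x)\bigr)$. Well-definedness on the quotient $\Sigma_{c_{2}}\ast_{\mathbb{T}}\Sigma_{c_{1}}$ is immediate from $\mathbb{T}$-equivariance, and continuity is clear from continuity of $\varphi$ and of the groupoid operations. Applying the same construction to the \wordforphi\ for the swapped pair produces a two-sided inverse, so $\Phi$ is a homeomorphism. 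Axioms~\ref{it:MT:T equivariant} and~\ref{it:MT:source and range} are built in, \ref{it:MT:diagram,top} is~\ref{it:connector:normalized}, and~\ref{it:MT:associativity, right},~\ref{it:MT:associativity, left} are a direct translation of~\ref{it:connector:cocycle cond,c1},~\ref{it:connector:cocycle cond,c2} by the same computations as above read in reverse. Finally, $\Phi$ covers $\cG_{1}\bowtie\cG_{2}$ by inspection of~\eqref{eq:Phi and phi}.

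\textbf{The $2$-cocycle $c$.} Consider the continuous section
\[
s\colon \cG_{1}\bowtie\cG_{2}\to \Sigma_{c_{1}}\ZST \Sigma_{c_{2}},\qquad s(x,g)=(1,x)\astonetwo(1,g).
\]
Using the groupoid product of the external \ZS\ twist in~\eqref{eq:EZS:multiplication}, the formula~\eqref{eq:Phi and phi} for $\Phi$, and the twist multiplications in $\Sigma_{c_1}$ and $\Sigma_{c_2}$, one computes
\[
s(x_{1},g_{1})\,s(x_{2},g_{2})
=c_{1}(x_{1},g_{1}\HleftG x_{2})\,\varphi(g_{1},x_{2})\,c_{2}(g_{1}\HrightG x_{2},g_{2})\cdot s\bigl((x_{1},g_{1})(x_{2},g_{2})\bigr).
\]
This identifies $c$ as the scalar arising from a continuous global section of a locally trivial $\mathbb{T}$-bundle, so $c$ is a (normalised) continuous $2$-cocycle on $\cG_{1}\bowtie\cG_{2}$, and the map $(z,(x,g))\mapsto z\cdot s(x,g)$ is a canonical isomorphism of twists $\Sigma_{c}\to\Sigma_{c_{1}}\ZST\Sigma_{c_{2}}$ over $\cG_{1}\bowtie\cG_{2}$.

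The only non-routine step is the translation between \ref{it:MT:associativity, right}, \ref{it:MT:associativity, left} and \ref{it:connector:cocycle cond,c1}, \ref{it:connector:cocycle cond,c2}: the bookkeeping combines the cocycle identity in $\Sigma_{c_i}$ with the interaction between $\HleftG$, $\HrightG$, and $\varphi$, and one must carefully match $\mathbb{T}$-components in the quotient $\Sigma_{c_1}\ast_\mathbb{T}\Sigma_{c_2}$. Once organised as above, all steps are computations of the same flavour, and the cocycle identity for $c$ drops out for free from the groupoid structure on $\Sigma_{c_{1}}\ZST\Sigma_{c_{2}}$.
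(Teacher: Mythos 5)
Your proposal is correct and follows essentially the same route as the paper: extract $\varphi$ from $\Phi$ as the unique $\mathbb{T}$-component of the normalized representative in the fibre over $(g\HleftG x, g\HrightG x)$, translate \ref{it:MT:diagram,top}--\ref{it:MT:associativity, left} into \ref{it:connector:normalized}--\ref{it:connector:cocycle cond,c2} and back, and obtain $c$ from the continuous section $(x,g)\mapsto(1,x)\astonetwo(1,g)$. The only places the paper is more careful are the continuity of $\varphi$ (proved via an explicit formula through $\jmath\inv$) and the invertibility of $\Phi$ (an explicit inverse is written down rather than appealing to the swapped pair), but these are minor and your outline is sound.
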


\begin{remark}\label{rmk:Phi never unique}
Example~\ref{ex:connector:trivial} shows that \wordforphi s are very much not unique,
and that different choices of \wordforphi s can lead to non-isomorphic twists.
Consequently, Proposition~\ref{prop:EZS twist:2-cocycles} shows that there is likewise no uniqueness of twist \wordforPhi s, even if the twists $\Sigma_{1},\Sigma_{2}$ are trivial and even if the two-way actions of $\cG_{1}$ and $\cG_{2}$ on one another are fixed.
\end{remark}

\begin{proof}
    Write $\Sigma_{i}=\Sigma_{c_{i}}$, and suppose first that 
    {$\Phi\colon \Sigma_{2}\ast_{\mathbb{T}}\Sigma_{1}\to \Sigma_{1}\ast_{\mathbb{T}}\Sigma_{2}$ is a \wordforPhi\ for the pair $(\Sigma_{1},\Sigma_{2})$.}
    For any fixed $(g,x)\in\cG_{2}\bfpsr \cG_{1}$,
    {it follows from Equation~\eqref{eq:Pi and Phi} in Remark~\ref{rmk:Psi, Pi, and Phi} (see also Example~\ref{ex:MT in the trivial case}) that }
    there exist $z_{1},z_{1}\in\mathbb{T}$ 
    such that 
    \begin{equation}\label{eq:from Phi to varphi}
     \Phi\bigl((1,g)\asttwoone (1,x)\bigr) = (z_{1},
     g\HleftG x
     )\astonetwo (z_{2},
     g\HrightG x
     ).
    \end{equation}
    Since $(z_{1},g\HleftG x)=z_{1}\cdot(1,g\HleftG x)$, the balancing in $\Sigma_{1}\ast_{\mathbb{T}}\Sigma_{2}$ allows us to 
    turn Equation~\eqref{eq:Phi and phi} into Equation~\eqref{eq:from Phi to varphi} by defining
    $\varphi(g,x)\coloneq  z_{1}z_{2}$.
    Note that $\varphi$ is well defined: the elements $(1,x)$ and $(1,g\HleftG x)$ of $\cG_{1}$ have no other representatives with $1$ in the first component, so that $g,x$ and the product $z_{1}z_{2}$ are uniquely determined.

    Observe that, because of Condition~\ref{it:MT:T equivariant}, we can generalize Equation~\eqref{eq:Phi and phi} to
    \begin{equation}     
     \label{eq:Phi and phi, generalized}
     \Phi\bigl((z,g)\asttwoone (w,x)\bigr) = (z,g\HleftG x) \astonetwo (\varphi(g,x)w,g\HrightG x)
    \end{equation}

    To see that $\varphi$ is continuous, let
    \[
    u\coloneq 
    r_{\Sigma_{1}\ZST \Sigma_{2}}
    \bigl((1,g\HleftG x) \astonetwo (1,g\HrightG x)
    \bigr)
    .
    \]
    Using that $\Sigma_{1}\ZST \Sigma_{2}$ is a twist over $\cG_{1}\bowtie\cG_{2}$, we
    rewrite Equation~\eqref{eq:from Phi to varphi} to
    \begin{align*}
        \jmath(\varphi(g,x) ,u)
        =
        \Phi\bigl((1,g)\asttwoone (1,x)\bigr)
        \bigl[(1,g\HleftG x) \astonetwo (1,g\HrightG x)\bigr]\inv .
    \end{align*}
    In other words,
    \begin{align}
        \varphi(g,x)
        =
        (\mathrm{pr}_{\mathbb{T}}\circ \jmath\inv)
        \bigl(\Phi\bigl((1,g)\asttwoone (1,x)\bigr)
        \bigl[(1,g\HleftG x) \astonetwo (1,g\HrightG x)\bigr]\inv \bigr).
    \end{align}
    This shows that $\varphi$ is built from an array of continuous maps (multiplication and inversion of $\mathmbox{\Sigma_{1}\ZST \Sigma_{2}}$ are continuous and $\jmath$ is a homeomorphism onto its image; Proposition~\ref{prop.external.twist}), so that $\varphi$ is itself continuous.


    Now take $g\in (\cG_{2})^u_v$. Then
    \begin{align}
        \varphi(u,g)=1
        &\iff
        \Phi\bigl((1,g)\asttwoone (1,u)\bigr) = (1,g\HleftG u)\astonetwo (1,g\HrightG u).
        \notag
        \intertext{With \ref{item:ZS10} and \ref{item:ZS6}, we rewrite the right-hand condition to arrive at:}
        \varphi(u,g)=1
        &\iff
        \Phi\bigl((1,g)\asttwoone (1,u)\bigr) = (1,v)\astonetwo (1,g).
        \label{eq:normalized iff}
    \end{align}
    The right-hand equality is indeed satisfied by Condition~\ref{it:MT:diagram,top} for $i=2$, so the first half of \ref{it:connector:normalized} holds. The second half follows 
    analogously
    from Condition~\ref{it:MT:diagram,top} for $i=1$.

    To see that \ref{it:connector:cocycle cond,c1} holds, fix $g\in \cG_{2}$ and $(x,y)\in \cG_{1}^{(2)}$ such that $s(g)=r(x)$; if we let $z\coloneq  c_{1}(g\HleftG x,[g\HrightG x]\HleftG y)$ and  $w\coloneq  c_{1}(x,y)$, then we must verify that $\varphi(g,xy)w
    =
    \varphi(g,x)\varphi(g\HrightG x, y)z$.
    By Equation~\eqref{eq:Phi and phi, generalized}, we have
    \[
        \Phi\bigl((1,g)\asttwoone (1,x)\bigr)
        =
        (1,g\HleftG x) \astonetwo (\varphi(g,x),g\HrightG x),
    \]
    so it follows from Condition~\ref{it:MT:associativity, right} that 
    \begin{align}\label{eq:a long equation}
        \Phi\bigl((1,g)\asttwoone (1,x)(1,y)\bigr)
        &=
        (1,g\HleftG x)\bullet \Phi\bigl((\varphi(g,x),g\HrightG x) \asttwoone (1,y)\bigr).
    \end{align}
    We will compute both sides of this equation. For the left-hand side, we use
    in the first step that $w\in\mathbb{T}$ is such that
    $(1,x)(1,y)
        =
        (w,xy)$ 
    and 
    we use
    \ref{item:ZS8} in the last step:
    \begin{align}
    \Phi\bigl((1,g)\asttwoone (1,x)(1,y)\bigr) 
    &=\Phi\bigl((1,g)\asttwoone (w,xy)\bigr) 
    \notag
    \\
    &
    = (1,g\HleftG xy)\astonetwo (\varphi(g,xy)w,g\HrightG xy)
    &&
    \text{by \eqref{eq:Phi and phi}}
    \notag
    \\
    &= \bigl(1,(g\HleftG x)([g\HrightG x]\HleftG y)\bigr)\astonetwo \bigl(\varphi(g,xy)w,g\HrightG xy\bigr)
    \label{eq:for associativity}
    \end{align}   
    For right-hand side of Equation~\eqref{eq:a long equation}, we first write
    \begin{align}
        \Phi\bigl((1,g\HrightG x) \asttwoone (1,y)\bigr)
        &\overset{\eqref{eq:Phi and phi, generalized}}{=}
        \bigl(1,[g\HrightG x]\HleftG y\bigr)
        \astonetwo 
        \bigl(\varphi(g\HrightG x,y),[g\HrightG x]\HrightG y\bigr)
        \\
        &\overset{\ref{item:ZS4}}{=}
        \bigl(1,[g\HrightG x]\HleftG y\bigr)
        \astonetwo 
        \bigl(\varphi(g\HrightG x,y),g\HrightG xy\bigr),
    \intertext{%
    so that}
        \Phi\bigl((\varphi(g,x),g\HrightG x) \asttwoone (1,y)\bigr)
        &
    =
        \bigl(\varphi(g,x),[g\HrightG x]\HleftG y\bigr)
        \astonetwo 
        \bigl(\varphi(g\HrightG x,y),g\HrightG xy\bigr).
        \label{eq:RHS of long eq}
    \end{align}
    The definition of
    the multiplication in $\Sigma_{2}=\Sigma_{c_{2}}$ and the definition of $z$ tell us that
    \[
        \bigl(1,g\HleftG x\bigr)\bigl(
        \varphi(g,x)
        ,[g\HrightG x]\HleftG y\bigr)
        =
        \bigl(z
        \varphi(g,x)
        ,(g\HleftG x)([g\HrightG x]\HleftG y)\bigr).
    \]
    Combining this with Equation~\eqref{eq:RHS of long eq},
    the right-hand side of Equation~\eqref{eq:a long equation} becomes
    \begin{align*}
        &\bigl(z \varphi(g,x),(g\HleftG x)([g\HrightG x]\HleftG y)\bigr)
        \astonetwo 
        \bigl(\varphi(g\HrightG x,y),g\HrightG xy\bigr)
        \\
        &
        =
        \bigl(1,(g\HleftG x)([g\HrightG x]\HleftG y)\bigr)
        \astonetwo 
        \bigl(z\varphi(g,x)\varphi(g\HrightG x,y),g\HrightG xy\bigr)
        .
    \end{align*}
    Comparing this to the left-hand side of Equation~\eqref{eq:a long equation}, which we have computed in line~\eqref{eq:for associativity}, 
    %
    we conclude that we must have
    \[
        \varphi(g,xy)w=z\varphi(g,x)\varphi(g\HrightG x,y),
    \]
    which is exactly Condition~\ref{it:connector:cocycle cond,c1}. One shows Condition~\ref{it:connector:cocycle cond,c2} analogously, invoking Condition~\ref{it:MT:associativity, left}
    in place of ~\ref{it:MT:associativity, right}.
    
    Conversely, assume that we have a \wordforphi\ $\varphi$. We define $\Phi$ by the formula at~\eqref{eq:Phi and phi, generalized}; we get Condition~\ref{it:MT:T equivariant} 
    ($\mathbb{T}$-equivariance)
    for free. 
    The map
    \begin{align*}
        \cG_{2}\bfpsr \cG_{1}
        &\to \cG_{1}\bfpsr \cG_{2}        
        ,&
        (g,x) &\mapsto
        (
            g\HleftG x, g\HrightG x
        ),
    \intertext{has inverse given by}
        f\colon
        \cG_{1}\bfpsr \cG_{2}
        &\to \cG_{2}\bfpsr \cG_{1}        
        ,&
        (y,h) &\mapsto
        \bigl(
            [h\inv \HrightG y\inv]\inv, 
            [h\inv \HleftG y\inv]\inv
        \bigr).
    \end{align*}
    If we let $\mathrm{pr}_{\cG_{j}}$ denote the projection map $\cG_{1}\bowtie\cG_{2}\to\cG_{j}$, then
    it is easy to check that
     \begin{align}   
    \Phi\inv \bigl((z,y) \astonetwo (w,h)\bigr)
     =
    \bigl(z, \mathrm{pr}_{\cG_{2}}(f(y,h))\bigr)
    \asttwoone
    \bigl(\overline{\varphi(f(y,h))}w,  \mathrm{pr}_{\cG_{1}}(f(y,h))\bigr).
    \end{align}
    As a concatenation of continuous maps, we see that $\Phi$ and $\Phi\inv$ are continuous, so $\Phi$ is a homeomorphism.

    Condition~\ref{it:MT:source and range} follows directly from the definition of $\Phi$ from $\varphi$ (Equation~\eqref{eq:Phi and phi, generalized}) and from the \ZS\ product conditions on $\cG_{1}\bowtie\cG_{2}$:
    \begin{align*}
        r_{\Sigma_2}(z,g)=r_{\cG_{2}}(g)
        \overset{\ref{item:ZS2}}{=}
        r_{\cG_{1}}(g\HleftG x) = r_{\Sigma_{1}} (z,
     g\HleftG x)
    \end{align*}
    and 
    \begin{align*}
        s_{\Sigma_{1}} (w,x)
        =
        s_{\cG_{1}}(x)
        \overset{\ref{item:ZS5}}{=}
        s_{\cG_{2}}(g\HrightG x)
        =
        s_{\Sigma_2} \bigl(\varphi(g,x)w,g\HrightG x\bigr).
    \end{align*}    

    For \ref{it:MT:diagram,top}, we compute
    \begin{align*}
        (\Phi\circ \iota^{1}_{2,1}) (w,x)
        &=
        \Phi
        \bigl((1,r(x))\asttwoone (w,x)\bigr)
        \\
        &=
        (1,r(x)\HleftG x) \astonetwo (\varphi(r(x),x)w,r(x)\HrightG x)
        \\
        &\overset{(\dagger)}{=}
        (1,x) \astonetwo (w,s(x))
        =
        (w,x) \astonetwo (1,s(x))
        =
        \iota^{1}_{1,2} (w,x)
    \intertext{where $(\dagger)$ follows from  \ref{item:ZS3}, \ref{it:connector:normalized},\ref{item:ZS11}. Likewise}
        (\Phi\circ \iota^{2}_{2,1}) (z,g)
        &=
        \iota^{2}_{1,2} (z,g).
    \end{align*}
    
    For~\ref{it:MT:associativity, right},
    we must prove that Equation~\eqref{eq:a long equation} holds. We
    amend the computation we have done in Equation~\eqref{eq:for associativity}, where 
    as before
    $w\coloneq  c_{1}(x,y)$ and $z\coloneq  c_{1}(g\HleftG x,[g\HrightG x]\HleftG y)$:
    \begin{align*}
        \Phi\bigl((1,g)\asttwoone (1,x)(1,y)\bigr) 
        &
        \overset{\eqref{eq:for associativity}}{=} \bigl(1,(g\HleftG x)([g\HrightG x]\HleftG y)\bigr)\astonetwo \bigl(\varphi(g,xy)w,g\HrightG xy\bigr)
        \\
        &= (1,g\HleftG x)\bullet \bigl(\overline{z},[g\HrightG x]\HleftG y\bigr)\astonetwo \bigl(\varphi(g,xy)w,g\HrightG xy\bigr)
    \end{align*}  
    To conclude Condition~\ref{it:MT:associativity, right}, it now suffices to prove
    \[
    \Phi\bigl(
       (\varphi(g,x), g\HrightG x) \asttwoone (1,y)
    \bigr)
    =
    \bigl(\overline{z},[g\HrightG x]\HleftG y\bigr)\astonetwo \bigl(\varphi(g,xy)w,g\HrightG xy\bigr)
    .
    \]
    Since $\varphi(g,x)\varphi(g\HrightG x,y)=\varphi(g,xy)w\overline{z}$ by \ref{it:connector:cocycle cond,c1} and since $\Phi$ is $\mathbb{T}$-equivariant, 
    this is equivalent to showing
    \[
    \Phi\bigl(
       (1, g\HrightG x) \asttwoone (1,y)
    \bigr)
    =
    \bigl(1,[g\HrightG x]\HleftG y\bigr)\astonetwo \bigl(\varphi(g\HrightG x,y),g\HrightG xy\bigr).
    \]
    But this is exactly the definition of $\Phi$ at~\eqref{eq:Phi and phi, generalized} once one realizes that $g\HrightG xy=(g\HrightG x)\HrightG y$ by \ref{item:ZS4}. Condition~\ref{it:MT:associativity, left} is shown analogously, using \ref{it:connector:cocycle cond,c2}. This concludes our proof that $\Phi$ is a \wordforPhi.

    \medskip

    Since the continuous map
    \[
        \mathfrak{s}\colon \cG_{1}\bowtie\cG_{2}\to \Sigma_{1}\ZST \Sigma_{2},
        \quad
        (x,g)\mapsto (1,x)\astonetwo (1,g),
    \]
    satisfies $(\Pi_{1,2}\circ\mathfrak{s})(x,g)=(x,g)$, it is a continuous section of the twist $\Sigma_{1}\ZST \Sigma_{2}$, so  this means that the function $c'\colon (\cG_{1}\bowtie\cG_{2})^{(2)}\to \mathbb{T}$ given
for $(\xi_{1},\xi_{2})\in (\cG_{1}\bowtie\cG_{2})^{(2)}$ by
\[
c\bigl(\xi_{1},\xi_{2}\bigr) =
\mathfrak{s}(\xi_{1})
\
\mathfrak{s}(\xi_{2})
\
\mathfrak{s}(\xi_{2}\xi_{2})\inv
\]
is a $2$-cocycle on $\cG_{1}\bowtie\cG_{2}$ and that $\Sigma_{1}\ZST \Sigma_{2}\cong \Sigma_{c}$ (cf.\ \cite[Fact 4.1]{Kum:Diags}, 
    \cite[Proposition
I.1.14]{Renault:gpd-approach}, \cite[Remark 11.1.6]{Sims:gpds}). 
We compute for $\xi_{i}=(x_{i},g_{i})$:
\begin{align*}
    \mathfrak{s}(\xi_{1})
    \
    \mathfrak{s}(\xi_{2})
    &=
    (1,x_{1})\bullet 
    \Phi\bigl( (1,g_{1}) \asttwoone (1,x_{2})\bigr)
    \bullet (1,g_{2})
    \\
    &=
    (1,x_{1})\bullet 
    (1,g_{1}\HleftG x_{2}) \astonetwo (\varphi(g_{1},x_{2}),g_{1}\HrightG x_{2})
    \bullet (1,g_{2})
&& \text{by \eqref{eq:Phi and phi}}
    \\
    &=
    c_{1}(x_{1},g_{1}\HleftG x_{2})
    \
    \varphi(g_{1},x_{2})
    \
    c_{2}(g_{1}\HrightG x_{2}, g_{2})
    \cdot
    \\
    &\mathbin{\phantom{=}}
    (1,x_{1}[g_{1}\HleftG x_{2}]) \astonetwo (1,[g_{1}\HrightG x_{2}]g_{2})
    \\
    &=
    c_{1}(x_{1},g_{1}\HleftG x_{2})
    \
    \varphi(g_{1},x_{2})
    \
    c_{2}(g_{1}\HrightG x_{2}, g_{2})\cdot \mathfrak{s}(\xi_{1}\xi_{2}),
\end{align*}
which proves that $c$ has the claimed form.
\end{proof}

\begin{example}\label{ex:connector:rotation 2-cocycles,ctd}
Using Example~\ref{ex:connector:rotation 2-cocycles}, we recover a very famous twisted group:
    By Proposition~\ref{prop:EZS twist:2-cocycles}, it follows that
    \[
        c\colon \mathbb{Z}^2 \times \mathbb{Z}^2 \to \mathbb{T},
        \quad
        c\bigl(\vec{m},\vec{n}\bigr)=\mathsf{e}^{2\pi i \theta m_{2}n_{1}}
    \]
    is a $2$-cocycle on $\mathbb{Z}^2$. It is well known that the twisted group $C^*$-algebra associated to this $2$-cocycle is the rotation algebra, $A_{\theta}$.
\end{example}

In the case of an internal \ZS\ twist, the \wordforphi\ can be recovered from the $2$-cocycle directly. 

\begin{proposition}[Internal \ZS\ twist for $2$-cocycles]\label{prop:IZS twist:2-cocycles}
Suppose $c$ is a $2$-cocycle on a groupoid $\cG$ such that $(\Sigma_{c},\cG)$ has an internal \ZS\ product structure  $(\Sigma_{1}, \Sigma_{2})$ as in Definition~\ref{def:IZS}; let $\cG_{i}\coloneq \mathrm{pr}_{2}(\Sigma_{i})\leq\cG$. Then
\(
c_{i}\coloneq  c|_{\cG_{i}}
\)
is a $2$-cocycle on $\cG_{i}$, and the 
map $\varphi\colon \cG_{2}\bfpsr \cG_{1}\to \mathbb{T}$ given by
\[\varphi(g,x)=c(g,x)\overline{c(g\HleftG x, g\HrightG x)}\]
is a \wordforphi\ for the pair $(c_{1},c_{2})$.
\end{proposition}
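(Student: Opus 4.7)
My plan has two independent parts, matching the two assertions. The first, that $c_i = c|_{\cG_i}$ is a (normalized) $2$-cocycle on $\cG_i$, is essentially free: by Lemma~\ref{lem:decomposing e_{2}e_{1}}\ref{it:IZS:closed}, $\cG_i$ is a closed subgroupoid of $\cG$, and the cocycle identity and the normalization conditions are preserved under restriction of domain. So the substance of the proof lies in checking that the formula $\varphi(g,x) = c(g,x)\overline{c(g\HleftG x, g\HrightG x)}$ defines a \wordforphi.

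Continuity of $\varphi$ is immediate since $c$, multiplication, inversion, and the actions $\HleftG, \HrightG$ are all continuous. For \ref{it:connector:normalized}, I would compute $\varphi(r(x),x)$ and $\varphi(g,s(g))$ directly, using \ref{item:ZS3} and \ref{item:ZS11} (respectively \ref{item:ZS10} and \ref{item:ZS6}) together with the normalization of $c$, to collapse both factors of the defining formula to $1$.

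The heart of the argument is the verification of \ref{it:connector:cocycle cond,c1} and \ref{it:connector:cocycle cond,c2}. These are instances of the cocycle identity~\eqref{eq:cocycle identity} for $c$, applied to triples of elements of $\cG = \cG_{1}\bowtie\cG_{2}$, after unwinding products via the internal decomposition $gx=(g\HleftG x)(g\HrightG x)$ given by Lemma~\ref{lem:2 unique decomp in ZS}. For \ref{it:connector:cocycle cond,c1}, my plan is to consider the three different ways of decomposing the product $gxy \in \cG$: apply~\eqref{eq:cocycle identity} first to $(g,x,y)$, then to $(g\HleftG x, g\HrightG x, y)$ to replace $c(gx,y)$, and finally to $(g\HleftG x, (g\HrightG x)\HleftG y, (g\HrightG x)\HrightG y)$ to replace $c(g\HleftG x, (g\HrightG x)y)$. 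Using \ref{item:ZS8} to rewrite $g \HleftG xy$ and \ref{item:ZS4} to rewrite $g \HrightG xy$, the resulting identity should rearrange exactly into \ref{it:connector:cocycle cond,c1}. For \ref{it:connector:cocycle cond,c2} I would run the mirror argument on the product $ghx$, starting with the cocycle identity for $(g,h,x)$ and iterating on $(g, h\HleftG x, h\HrightG x)$ and $(g\HleftG(h\HleftG x), g\HrightG(h\HleftG x), h\HrightG x)$, this time invoking \ref{item:ZS1} and \ref{item:ZS9} to recognize $g\HleftG(h\HleftG x) = (gh)\HleftG x$ and $(g\HrightG(h\HleftG x))(h\HrightG x) = (gh)\HrightG x$.

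The only mild obstacle I anticipate is bookkeeping: the computations are routine but lengthy, and each step produces a fraction of four or five $c$-values whose consistent cancellation relies on carefully matching up ZS-identities with the arguments in $c$. There is no conceptual difficulty beyond double-checking that each application of~\eqref{eq:cocycle identity} is applied to a genuinely composable triple, which in turn uses \ref{item:ZS2}, \ref{item:ZS5}, and \ref{item:ZS7} to track ranges and sources.
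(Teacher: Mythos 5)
Your proposal is correct and follows essentially the same route as the paper: both verify \ref{it:connector:cocycle cond,c1} by applying the cocycle identity~\eqref{eq:cocycle identity} to exactly the three triples $(g,x,y)$, $(g\HleftG x, g\HrightG x, y)$, and $(g\HleftG x, [g\HrightG x]\HleftG y, [g\HrightG x]\HrightG y)$, using Lemma~\ref{lem:2 unique decomp in ZS} together with \ref{item:ZS4} and \ref{item:ZS8} to match up the arguments, and both dispose of normalization, continuity, and the restriction claim in the same way. The only difference is organizational (the paper starts from $c_{1}(x,y)\varphi(g,xy)$ and expands, while you start from the cocycle identity for $(g,x,y)$ and substitute), which is immaterial.
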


If we want to think of $\cG_{1}\bowtie\cG_{2}$ as an external \ZS\ product, then
the above in-line equation 
should be
written as
\[
\varphi(g,x) = c\bigl( (r(g),g),(x, s(x))\bigr)\overline{c\bigl((g\HleftG x,s(g)), (r(x),g\HrightG x)\bigr)}
.
\]

\begin{proof}
    The fact that $c_{i}$ is a $2$-cocycle on $\cG_{i}$ is 
    obvious.
    The map $\varphi$ is continuous as concatenation of continuous maps; it remains to show the algebraic conditions in Definition~\ref{def:connector}.

    \ref{it:connector:normalized} Follows immediately since $c$ is normalized and since $r(x)\HrightG x = s(x)$
    by~\ref{item:ZS11} and $g\HleftG s(g)=r(g)$
    by~\ref{item:ZS10}.
    
    \ref{it:connector:cocycle cond,c1} Fix $g\in \cG_{2}$ and $(x,y)\in \cG_{1}^{(2)}$ such that $s(g)=r(x)$. We compute
    \begin{align*}
        c_{1}(x,y)\varphi(g,xy)
        &=
        c(x,y)c(g,xy)\overline{c(g\HleftG [xy], g\HrightG [xy])} 
        \intertext{which, by the cocycle identity~\eqref{eq:cocycle identity} (applied to the first two factors) and by \ref{item:ZS8} (applied to the third factor) equals}
        &=
        [c(g,x)c(gx,y)]
        \overline{c\bigl((g\HleftG x)([g\HrightG x]\HleftG y), g\HrightG [xy]\bigr)}.
    \end{align*}
    We compute the last factor, using the cocycle identity yet again:
    \begin{align*}
       &c\bigl((g\HleftG x)([g\HrightG x]\HleftG y), g\HrightG [xy]\bigr)
       \\
     &=
        c\bigl(g\HleftG x, ([g\HrightG x]\HleftG y)(g\HrightG [xy])\bigr)
        c\bigl([g\HrightG x]\HleftG y, g\HrightG [xy]\bigr)
        \overline{
            c\bigl(g\HleftG x,[g\HrightG x]\HleftG y\bigr)
        }
    \intertext{which, since $g\HrightG [xy]=(g\HrightG x)\HrightG y$ and thus $([g\HrightG x]\HleftG y)(g\HrightG [xy])=[g\HrightG x]y$ by Lemma~\ref{lem:2 unique decomp in ZS} equals}
     &=
        c\bigl(g\HleftG x, [g\HrightG x]y\bigr)
        c\bigl([g\HrightG x]\HleftG y, g\HrightG [xy]\bigr)
        \overline{
            c\bigl(g\HleftG x,[g\HrightG x]\HleftG y\bigr)
        }.
    \end{align*}
    Again, we take the very first factor and apply the cocycle identity:
    \begin{align*}
        c\bigl(g\HleftG x, [g\HrightG x]y\bigr)
        &=
        c\bigl((g\HleftG x)[g\HrightG x], y\bigr)
        c\bigl(g\HleftG x, g\HrightG x\bigr)
        \overline{c(g\HrightG x,y)}
        \\
        &=
        c\bigl(gx, y\bigr)
        c\bigl(g\HleftG x, g\HrightG x\bigr)
        \overline{c(g\HrightG x,y)}.
    \end{align*}
    Combining all of these computations, we arrive at:
    \begin{align*}
        c_{1}(x,y)\varphi(g,xy)
        &=
        c(g,x)c(gx,y)
        \overline{
        c\bigl(gx, y\bigr)
        c\bigl(g\HleftG x, g\HrightG x\bigr)
        }
        \\&\phantom{=}
        c(g\HrightG x,y)
        \overline{
        c\bigl([g\HrightG x]\HleftG y, g\HrightG [xy]\bigr)
        }
            c\bigl(g\HleftG x,[g\HrightG x]\HleftG y\bigr).
        \intertext{The first row collapses to $\varphi(g,x)$, so that}
        c_{1}(x,y)\varphi(g,xy)
        &=
        \varphi(g,x)
        c(g\HrightG x,y)
        \overline{
        c\bigl([g\HrightG x]\HleftG y, g\HrightG [xy]\bigr)
        }
            c\bigl(g\HleftG x,[g\HrightG x]\HleftG y\bigr)
        \\&=
        \varphi(g,x)\varphi(g\HrightG x, y)c_{1}(g\HleftG x, (g\HrightG x)\HleftG y),
    \end{align*}
    as needed. One proves \ref{it:connector:cocycle cond,c2} analogously.
\end{proof}

\section{Cartan pairs and C*-blends}\label{sec:Cartan}

Twisted groupoid algebras arise naturally from C*-algebras with Cartan subalgebras due to the work of Renault \cite{Renault:Cartan}. There is an abundance of C*-algebras with Cartan subalgebras thanks to the recent work of Xin Li \cite{XLi2020_Cartan}, where he proved that every simple classifiable C*-algebra has a Cartan subalgebra. In this section, we will apply our results to twisted groupoids arising from Cartan pairs. 

One of our original motivations 
for this paper
was to explore an internal \ZS\ product structure of C*-algebras, similar to the internal \ZS\ product of groups. 
The reader should recall the two ingredients 
\hyperlink{item:intro:K product}{(1)} and \hyperlink{item:intro:G cap H}{(2)} from the introduction for a group $K$ to be the internal \ZS\ product of two subgroups $G$, $H$.
Given a C*-algebra $A$ with two subalgebras $A_{1}, A_{2}$, 
a good
C*-analogue for $A$ being ``a product of $A_{1}$ and $A_{2}$'' is precisely the notion of a 
\emph{C*-blend} due to Exel \cite[Definition 2.1]{Exel:Blends}. 
Viewed this way, \cite[Theorem 13]{BPRRW:ZS} is not surprising:  the full groupoid C*-algebra $C^*(\cG_{1}\bowtie\cG_{2})$ of the \ZS\ product of a matched pair $(\cG_{1},\cG_{2})$ is a C*-blend of the individual full C*-algebras $C^*(\cG_{1})$ and $C^*(\cG_{2})$. 
In our first result of this section, Theorem~\ref{thm:MT give C*-blends}, we generalize this result to twisted groupoid C*-algebras, both full and reduced.

In the last part of this section, we are then concerned with a partial converse:  Given a C*-blend, what properties are needed to realize the large algebra $A$ as ``a \ZS\ product'' of its two subalgebras $A_{1}, A_{2}$? At least if we want the algebras in question to be (reduced twisted) groupoid C*-algebras, then in view of Kumjian--Renault theory,  a natural replacement of Condition~\hyperlink{item:intro:G cap H}{(2)} in this setting is
that $D=A_{1}\cap A_{2}$ be a Cartan subalgebra in
all of $A_{1},A_{2},A$. Consequently, in Theorem~\ref{thm:from C*-blend}, we prove that the twisted Weyl groupoid associated with the Cartan pair $(A,D)$ is exactly the \ZS\ product of the twisted Weyl groupoids associated with $(A_{1},D)$ and $(A_{2},D)$.
We shall start with a brief review of twisted groupoid C*-algebras and C*-blend. 
We refer the reader to \cite{Wil2019} for a detailed discussion on groupoid C*-algebras, and to \cite{Exel:Blends} for more information regarding C*-blends.

\subsection{Twisted groupoid C*-algebras}

Suppose $\cG$ is a (\LCH) \etale\ groupoid, and let $\Sigma$ be a twist over $\cG$ as in Definition~\ref{def:twist}. Then we can give $\Sigma$ a canonical  Haar system 
induced by the system of counting measures on $\cG$  and the Lebesgue measure on $\mathbb{T}$.
On
\[C_c(\cG;\Sigma) \coloneq \{f\in C_c(\Sigma): f(z\cdot e)=zf(g), \text{ for all }z\in\mathbb{T}, e\in \Sigma\},\]
this allows us to define a $*$-algebra structure by the convolution formula
\[f*g(e')=\sum_{\pi(e):s(e')=s(e)} f(e' e\inv) g(e) \]
and the involution $f^*(e)=\overline{f(e\inv)}$. 

Aside from the supremum norm $\norm{\cdot}_\infty$, there are two natural choices of norms  on $C_c(\cG;\Sigma)$. For the \emph{reduced} norm $\norm{\cdot}_\red$, we first define for $u\in \cG\z$ the representation $\pi_u$ of $ C_c(\cG;\Sigma)$ on $L^2(\cG u;\Sigma u)$ by
\[
    \bigl(\pi_u (f)\xi \bigr) (e')=
        \sum_{\pi(e):s(e')=s(e)} f(e' e\inv) \xi(e)
    ,
\]
and we then let
\[
    \norm{f}_\red \coloneq \sup_{u\in\cG\z} \norm{\pi_u(f)}.
\]
Completing $C_c(\cG;\Sigma)$ in $\norm{\cdot}_\red$ yields the \emph{reduced twisted groupoid C*-algebra $C^*_\red (\cG;\Sigma)$}. The other norm we consider is the \emph{universal} or \emph{full} norm $\norm{\cdot}_\full$ on $C_c(\cG;\Sigma)$:
\[
\norm{f}_\full \coloneq \sup\left\{ \norm{\pi(f)}: \pi \text{ is a $*$-representation of }C_c(\cG;\Sigma)\right\}.
\]
The completion of $C_c(\cG;\Sigma)$ in $\norm{\cdot}_\full$ yields the \emph{full twisted groupoid C*-algebra $C^* (\cG;\Sigma)$}.

Many of our arguments in this section will happen on the level of the dense $*$-subalgebra $C_{c}(G;\Sigma)$. The following lemma gives a (well-known) description of it.
\begin{lemma}\label{lem:C_c(G;Sigma)=span}
    Suppose $\Sigma$ is a twist over a groupoid $\cG$ and suppose that $\mathfrak{U}$ is a base for the topology of $\cG$. Then
    \[
        C_c(\cG;\Sigma) 
        =
        \lspan
        \{
        f\in C_c(\cG;\Sigma)
        : 
        \pi(\suppo(f))\subseteq U \text{ for some }U\in \mathfrak{U}
        \}
        .
    \]
    In particular, 
    the span of such functions $f$ is dense both in the reduced $C_{\red}^*(\cG; \Sigma)$ 
    and in the full $C^*(\cG; \Sigma)$ twisted groupoid C*-algebra.
\end{lemma}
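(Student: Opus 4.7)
My approach would be a standard partition-of-unity argument on $\cG$, lifted to $\Sigma$ via the projection $\pi$. The containment ``$\supseteq$'' of the displayed equality is immediate, so the real content is the reverse containment: every $f\in C_c(\cG;\Sigma)$ decomposes as a finite sum of $\mathbb{T}$-equivariant functions whose open supports map under $\pi$ into a single basic open set from $\mathfrak{U}$.

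The steps, in order, would be as follows. First, fix $f\in C_c(\cG;\Sigma)$; since $\pi$ is continuous and $\supp(f)$ is compact, $K\coloneq \pi(\supp f)$ is a compact subset of $\cG$. Second, use that $\mathfrak{U}$ is a base and that $K$ is compact to extract a finite subcover $U_{1},\ldots,U_{n}\in\mathfrak{U}$ of $K$. Third, invoke a standard partition-of-unity construction on the \LCH\ space $\cG$ to obtain $\phi_{1},\ldots,\phi_{n}\in C_{c}(\cG)$ with $\supp(\phi_{i})\subseteq U_{i}$ and $\sum_{i}\phi_{i}\equiv 1$ on $K$. Fourth, pull back via $\pi$ to get $\psi_{i}\coloneq \phi_{i}\circ\pi$, which are $\mathbb{T}$-invariant since $\pi$ is. Fifth, set $f_{i}\coloneq \psi_{i}\cdot f$ and verify that $f_{i}\in C_{c}(\cG;\Sigma)$: continuity and compact support (with $\supp(f_i)\subseteq\supp(f)$) are immediate, and the product of a $\mathbb{T}$-invariant function with a $\mathbb{T}$-equivariant function is again $\mathbb{T}$-equivariant. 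Furthermore $\pi(\suppo(f_{i}))\subseteq\supp(\phi_{i})\subseteq U_{i}$, and $\sum_{i}f_{i}=f$ follows from the partition-of-unity identity on $K$ together with the fact that both sides vanish off $\pi\inv(K)$.

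The ``in particular'' clause is then automatic, since $C_{c}(\cG;\Sigma)$ is by definition dense in both $C^{*}_{\red}(\cG;\Sigma)$ and $C^{*}(\cG;\Sigma)$, so any spanning subset is dense as well.

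I do not anticipate a genuine obstacle here; the only subtlety is ensuring that the cut-off functions on $\Sigma$ remain $\mathbb{T}$-invariant so that each product $\psi_{i}\cdot f$ stays in $C_{c}(\cG;\Sigma)$, and this is precisely what pulling back along the $\mathbb{T}$-invariant map $\pi$ is designed to achieve.
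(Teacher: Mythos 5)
Your proposal is correct and is essentially the same argument the paper gives: cover the compact set $\pi(\supp f)$ by finitely many basic open sets, take a subordinate partition of unity on $\cG$, pull it back along $\pi$ (which preserves $\mathbb{T}$-invariance), and multiply against $f$ to obtain the desired $\mathbb{T}$-equivariant summands. No gaps; the density clause is handled the same way in both.
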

\begin{proof}
    Our proof is similar to that in \cite[Lemma 9.1.3]{Sims:gpds}, with changes accounting for the twist. For any $f\in C_c(\cG;\Sigma)$, we can cover the compact set $\pi(\supp(f))$ with finitely many elements $U_{1},\ldots, U_{n}$ of $\mathfrak{U}$. Let $\{h_{j}\}_{j=1}^{n}\subseteq C_c(\cG)$ be a partition of unity subordinate to $\{U_{j}\}_{j=1}^{n}$ and define $f^{j}(e)\coloneq  h_{j}(\pi(e)) f(e)$. As a pointwise product of continuous, compactly supported functions, $f^j$ is continuous and compactly supported. Since $\pi(\suppo(f'))\subseteq  \suppo(h_{j})\subseteq U_{j}\in \mathfrak{U}$, and since $f$ is $\mathbb{T}$-equivariant and $h_{j}\circ \pi$ is $\mathbb{T}$-invariant, we conclude $f^{j}\in C_c(\cG;\Sigma)$. Lastly, since we picked a partition of unity, we have
    $\sum_{j} f_{j}
        =
        f
    $, which finishes our claim.
\end{proof}

One reason why the above is so helpful is 
when it is used in conjunction with
the following, which greatly simplifies some computations:
\begin{lemma}[{see \cite[Theorem 11.1.11]{Sims:gpds}}]\label{lem:BPRRW's Lemma 15}
    Suppose $\Sigma$ is a twist over an \etale\  groupoid~$\cG$. If $f\in C_c(\cG;\Sigma)$ is such that $\pi(\suppo(f))$ is a bisection, then \[\|f\|_{\infty}=\|f\|_{\red}=\|f\|_{\full}
    .\]
\end{lemma}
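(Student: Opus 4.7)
The plan is to prove the chain $\|f\|_{\infty}\leq \|f\|_{\red}\leq \|f\|_{\full}\leq \|f\|_{\infty}$. The middle inequality is automatic since every regular representation descends from a $*$-representation of $C_{c}(\cG;\Sigma)$. The first inequality is standard for twisted \etale\ groupoids, obtained by evaluating the regular representation $\pi_{u}$ at $u=s(\pi(e))$ against a $\mathbb{T}$-equivariant unit vector in $L^{2}(\cG u;\Sigma u)$ concentrated near a single $\mathbb{T}$-orbit; this requires no bisection hypothesis. The main content is therefore the final inequality $\|f\|_{\full}\leq \|f\|_{\infty}$.

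By the C*-identity it suffices to show $\|f^{*}*f\|_{\full}\leq \|f\|_{\infty}^{2}$. First I would verify that $f^{*}*f$ is supported on $\pi\inv(\cU)=\jmath(\mathbb{T}\times\cU)$. Indeed, if $(f^{*}*f)(e')\neq 0$, then some orbit representative $e$ satisfies $\pi(e)\in B$ and $\pi(e{e'}\inv)\in B$, where $B\coloneq \pi(\suppo(f))$; since both of these elements of $B$ share range $r(\pi(e))$ and $r|_{B}$ is injective by the bisection hypothesis, they must coincide, forcing $\pi(e')\in\cU$.

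Next, any $h\in C_{c}(\cG;\Sigma)$ supported on $\pi\inv(\cU)$ is determined by the function $\phi\in C_{c}(\cU)$ with $h(\jmath(z,u))=z\phi(u)$ (cf.~\ref{it:twist:jmath}). A direct computation shows that $h\mapsto \phi$ is a $*$-algebra isomorphism onto $C_{c}(\cU)$ equipped with pointwise multiplication and complex conjugation; hence such $h$ generate a commutative $*$-subalgebra inside $C^{*}(\cG;\Sigma)$ whose completion is isomorphic to the commutative C*-algebra $C_{0}(\cU)$. On any commutative C*-algebra every C*-norm coincides with the sup norm, so $\|h\|_{\full}=\|h\|_{\infty}$ for all such $h$.

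Applying this to $h=f^{*}*f$, and using the bisection condition on $B$ once more, the orbit-sum defining $(f^{*}*f)(u)$ at $u\in\cU$ collapses to a single term $|f(e_{u})|^{2}$, where $e_{u}$ is any lift under $\pi$ of the unique element of $B$ with source $u$ (if one exists), yielding $\|f^{*}*f\|_{\infty}=\|f\|_{\infty}^{2}$. Combining everything, $\|f\|_{\full}^{2}=\|f^{*}*f\|_{\full}=\|f^{*}*f\|_{\infty}=\|f\|_{\infty}^{2}$. The main obstacle is the commutative-subalgebra step: one needs to verify carefully that the subalgebra of functions supported on $\pi\inv(\cU)$ really closes to a copy of $C_{0}(\cU)$ inside $C^{*}(\cG;\Sigma)$ so that no extra norm phenomena arise, which is where the cited \cite[Theorem 11.1.11]{Sims:gpds} does most of the work.
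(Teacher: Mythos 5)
The paper does not prove this lemma at all: it is quoted verbatim from the cited reference \cite[Theorem 11.1.11]{Sims:gpds}, so there is no in-paper argument to compare against. Your proof is correct and is essentially the standard argument from that reference --- reduce to $\|f^**f\|_{\full}\le\|f\|_\infty^2$, use the bisection hypothesis to show $f^**f$ is supported on $\pi\inv(\cU)$ and that the convolution there collapses to a single term, and invoke the uniqueness of the C*-norm on the commutative diagonal $C_0(\cU)$ (the one step you rightly flag as needing the standard fact that every $*$-representation of $C_c(\cU)$ is automatically sup-norm bounded).
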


There is one last tool that we will need later
 to be able to turn elements of $C_{c}(\Sigma)$ into elements of $C_{c}(\cG;\Sigma)$;
we thank Dana P.\ Williams for pointing us to this lemma which is readily checked.
\begin{lemma}[cf.\ {\cite[Lemme 
3.3]{Renault:1987:rep}}]\label{lem:making equiv fcts}
  Suppose $\Sigma$ is a twist over a groupoid $\cG$. For $f\in C_c(\Sigma)$, define $T(f)\colon \Sigma\to \mathbb{C}$ by 
 \[
    T(f)(e)= \int_\mathbb{T} \overline{z} f(z\cdot e)\,\mathrm{d}z.
 \]
 Then $T$ is a $\norm{\cdot}_{\infty}$-decreasing, linear, idempotent, surjective map $C_c(\Sigma)\to C_c(\cG;\Sigma)$.
\end{lemma}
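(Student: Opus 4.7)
The plan is to verify each of the five properties separately, noting that the hardest part will likely just be confirming that $T(f)$ is continuous (the rest reduce to short calculations).

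First, I would check that $T(f)$ is a well-defined element of $C_c(\cG;\Sigma)$ for any $f\in C_c(\Sigma)$. Linearity of $T$ is immediate from linearity of integration, and $\|T(f)\|_{\infty}\leq \|f\|_{\infty}$ follows at once from the triangle inequality for the integral once one normalises the Haar measure on $\mathbb{T}$ so that $\mathbb{T}$ has total mass~$1$. For continuity of $T(f)$, I would use that the action $\mathbb{T}\times\Sigma\to\Sigma$ is continuous and that $\mathbb{T}$ is compact, so for any net $e_{\lambda}\to e$ in $\Sigma$ the functions $z\mapsto f(z\cdot e_{\lambda})$ converge to $z\mapsto f(z\cdot e)$ uniformly on $\mathbb{T}$ (using that $f$ is uniformly continuous on the compact set $\mathbb{T}\cdot (K\cup\{e\})$, where $K$ is the eventual range of the net), so dominated convergence applies. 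Compact support of $T(f)$ is clear from the inclusion $\supp(T(f))\subseteq \mathbb{T}\cdot \supp(f)$, which is compact since $\mathbb{T}$ is compact and the action is continuous.

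Next, I would check $\mathbb{T}$-equivariance of $T(f)$. For $w\in\mathbb{T}$, a change-of-variables $y=zw$ in the integral gives
\[
T(f)(w\cdot e) = \int_{\mathbb{T}} \overline{z}\,f((zw)\cdot e)\,\mathrm{d}z = \int_{\mathbb{T}} \overline{yw^{-1}}\,f(y\cdot e)\,\mathrm{d}y = w\,T(f)(e),
\]
using translation-invariance of Haar measure on $\mathbb{T}$. Hence $T(f)\in C_c(\cG;\Sigma)$.

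For idempotency and surjectivity, the key observation is that $T$ restricts to the identity on $C_c(\cG;\Sigma)$: if $g\in C_c(\cG;\Sigma)$, then $g(z\cdot e) = z\,g(e)$, so
\[
T(g)(e) = \int_{\mathbb{T}} \overline{z}\cdot z\,g(e)\,\mathrm{d}z = g(e)\int_{\mathbb{T}} 1\,\mathrm{d}z = g(e).
\]
Since $C_c(\cG;\Sigma)\subseteq C_c(\Sigma)$, this immediately gives both $T\circ T = T$ (because $T(f)\in C_c(\cG;\Sigma)$ and $T$ fixes that subspace) and surjectivity of $T$ onto $C_c(\cG;\Sigma)$. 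The main obstacle, such as it is, lies only in justifying the interchange of limit and integral needed for continuity of $T(f)$; everything else is formal manipulation of the defining integral.
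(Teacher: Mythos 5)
Your proof is correct and supplies exactly the routine verification that the paper omits (it only remarks that the lemma "is readily checked" and cites Renault): the normalized Haar measure gives the norm bound and the fixed-point computation on $C_c(\cG;\Sigma)$, the tube-lemma/uniform-convergence argument over the compact group $\mathbb{T}$ gives continuity, and $\supp(T(f))\subseteq\mathbb{T}\cdot\supp(f)$ gives compact support. No gaps.
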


\subsection{C*-blend}

We begin with some basic background on 
C*-blends
and prove that the 
``\ZS-twisted'' C*-algebra
of 
$\Sigma_{1}\ZST \Sigma_{2}\to \cG_{1}\bowtie\cG_{2}$
can naturally be realized as a C*-blend of the twisted C*-algebras of the individual
twists $\Sigma_{j}\to\cG_{j}$.

\begin{definition}[%
{%
    \cite[Definition 3.1]{Exel:Blends}%
}%
]
    A quintuple $(B_{1},\iota_{1};B_{2}, \iota_{2}; A)$
    is \emph{a C*-blend} if
    \begin{enumerate}[label=\textup{(B\arabic*)}]
        \item $A,B_{1},B_{2}$ are C*-algebras;
        \item each $\iota_{j}\colon B_{j}\to \mathcal{M}(A)$ is a *-homomorphism; and
        \item\label{it:C*B:dense range} the range of the map 
        \begin{align*}
            \iota_{1}\odot \iota_{2}\colon B_{1}\odot B_{2}&\to \mathcal{M}(A),
            &&
            b_{1}\odot b_{2}\mapsto \iota_{1}(b_{1})\iota_{2}(b_{2}),
        \end{align*} 
        is
        contained and dense in $A$.
    \end{enumerate}
    If 
    $\iota_{1}\odot\iota_{2}$ is injective, the C*-blend is called a \emph{C*-alloy}.  
    We call the C*-blend \emph{\adjforBlend} if both maps $\iota_{1},\iota_{2}$ are injective with image contained in $A$.
    When the maps $\iota_{1},\iota_{2}$ are 
   understood, we may drop them and write the quintuple 
    simply as a triple $(B_{1};B_{2};A)$.
\end{definition}

If  $(\cG_{1},\cG_{2})$ is a pair of matched \emph{$r$-discrete} groupoids, then it follows from Lemma~\ref{lem:clopen in ZS-product if r-discrete} that the subgroupoid $\cG_{i}$  of $\cG_{1}\bowtie\cG_{2}$ is not only closed but also open; in particular, we have a well-defined map $C_c(\cG_{i})\to C_c(\cG_{1}\bowtie\cG_{2})$ that extends functions by $0$ outside of $\cG_{i}$. In the case of \etale\ groupoids, this inclusion can be extended to the associated reduced groupoid C*-algebras, and indeed even for 
the twisted algebras:
\begin{lemma}\label{lem:MT gives incl}
   Suppose $(\cG_{1},\cG_{2})$ is a pair of matched \emph{\etale} groupoids. Suppose further that $(\Sigma_{1},\Sigma_{2})$ is a pair of matched twists
   with \wordforPhi\ $\Phi$
   that covers $\cG=\cG_{1}\bowtie\cG_{2}$; let $\Sigma=\Sigma_{1}\ZST \Sigma_{2}$ be the external \ZS\ twist (Theorem~\ref{thm:external ZS-product}).
   For $i=1,2$, the map $C_c(\cG_{i};\Sigma_{i})\to C_c(\cG;\Sigma)$ that extends functions by $0$ extends to 
   an injective $*$-homomorphism
   \[
   \iota^{\red}_{i}
   \colon C_{\red}^*(\cG_{i};\Sigma_{i})\to C_{\red}^*(\cG;\Sigma)
   \]
   of reduced twisted groupoid C*-algebras 
   and to a $*$-homomorphism
   \[
   \iota^{\full}_{i}\colon C^*(\cG_{i};\Sigma_{i})\to C^*(\cG;\Sigma)
   \]
   of their full counterparts. 
\end{lemma}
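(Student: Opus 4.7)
The plan is as follows. Since both $\cG_{1}$ and $\cG_{2}$ are \etale, Lemma~\ref{lem:clopen in ZS-product if r-discrete} gives that each $\cG_{i}$ is clopen inside $\cG$. As $\Pi_{1,2}\colon \Sigma\to\cG$ is continuous and $\Sigma_{i}$ (under its identification with $\iota^{i}_{1,2}(\Sigma_{i})$) equals $\Pi_{1,2}\inv(\cG_{i})$, the subgroupoid $\Sigma_{i}$ is likewise clopen in $\Sigma$. Consequently, any $f\in C_c(\cG_{i};\Sigma_{i})$ extends by zero to a continuous, compactly supported, $\mathbb{T}$-equivariant function $\tilde f$ on $\Sigma$, i.e., $\tilde f\in C_c(\cG;\Sigma)$. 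The map $\iota_{i}\colon f\mapsto \tilde f$ is linear and preserves the involution. For the convolution, given $f,g\in C_c(\cG_{i};\Sigma_{i})$ and $e'\in\Sigma$, a summand in $(\tilde f*\tilde g)(e')$ is nonzero only when $e\in\Sigma_{i}$ and $e'e\inv\in\Sigma_{i}$; since $\Sigma_{i}$ is a subgroupoid, this forces $e'\in\Sigma_{i}$, where the sum collapses to $(f*g)(e')$. Hence $\iota_{i}$ is a $*$-homomorphism from $C_c(\cG_{i};\Sigma_{i})$ into $C_c(\cG;\Sigma)$.

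For the full norm, observe that any $*$-representation $\pi$ of $C_c(\cG;\Sigma)$ composes with $\iota_{i}$ to yield a $*$-representation of $C_c(\cG_{i};\Sigma_{i})$, whence $\|\pi(\tilde f)\|\leq \|f\|_{\full}$. Taking the supremum over $\pi$ gives $\|\tilde f\|_{\full}\leq\|f\|_{\full}$, so $\iota_{i}$ extends continuously to the desired $\iota^{\full}_{i}$.

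For the reduced norm, fix $u\in\cU$ and decompose $\cG u$ into the orbits $[k]=\cG_{i} r(k)\cdot k$ for the left $\cG_{i}$-action. This induces an orthogonal decomposition
\[
L^{2}(\cG u;\Sigma u)=\bigoplus_{[k]} H_{[k]},
\]
where $H_{[k]}$ is the closed subspace of equivariant $L^{2}$-sections supported on $\Pi_{1,2}\inv([k])$. Since $\tilde f$ is supported in $\Sigma_{i}$, the operator $\pi_{u}^{\cG}(\tilde f)$ preserves each $H_{[k]}$. Choosing a representative $k_{0}\in[k]$ and a lift $\sigma_{0}\in\Sigma$ of $k_{0}$, right-multiplication $\tau\mapsto\tau\sigma_{0}$ is a $\mathbb{T}$-equivariant homeomorphism $\Sigma_{i} r(k_{0})\to \Pi_{1,2}\inv([k])$, inducing a unitary $U\colon L^{2}(\cG_{i} r(k_{0});\Sigma_{i} r(k_{0}))\to H_{[k]}$ that satisfies $U^{*}\pi_{u}^{\cG}(\tilde f)U=\pi_{r(k_{0})}^{\cG_{i}}(f)$. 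Thus
\[
\|\pi_{u}^{\cG}(\tilde f)\|=\sup_{[k]}\|\pi_{r(k_{0})}^{\cG_{i}}(f)\|\leq\|f\|_{\red},
\]
giving $\|\tilde f\|_{\red}\leq\|f\|_{\red}$. Conversely, taking $k_{0}=u$ produces the orbit $\cG_{i} u$, and the restriction of $\pi_{u}^{\cG}(\tilde f)$ to $H_{[u]}\cong L^{2}(\cG_{i} u;\Sigma_{i} u)$ is precisely $\pi_{u}^{\cG_{i}}(f)$; this yields $\|f\|_{\red}\leq\|\tilde f\|_{\red}$. Hence $\iota_{i}$ extends to an isometric---in particular injective---$*$-homomorphism $\iota^{\red}_{i}$.

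The step that will require the most care is the verification of the intertwining $U^{*}\pi_{u}^{\cG}(\tilde f)U=\pi_{r(k_{0})}^{\cG_{i}}(f)$: one must track how the twist cocycle interacts with right-multiplication by the lift $\sigma_{0}$ inside the convolution formula, and confirm that the resulting identity is independent of the choice of $k_{0}\in[k]$ and of the lift $\sigma_{0}$. Everything else is routine bookkeeping once the clopen nature of $\Sigma_{i}$ in $\Sigma$ has been established.
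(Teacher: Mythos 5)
Your proof is correct, and its overall architecture matches the paper's: establish that $\cG_{i}$ is (cl)open in $\cG$ and that $\Sigma_{i}=\Pi_{1,2}\inv(\cG_{i})$, handle the full norm by composing representations (the universal property), and handle the reduced norm by showing the extension-by-zero is isometric. The one real difference is that for the reduced case the paper simply cites \cite[Lemma~2.7]{BFPR:GammaCartan}, whereas you reprove that lemma from scratch: you decompose $L^{2}(\cG u;\Sigma u)$ along the left $\cG_{i}$-orbits of $\cG u$, observe that $\pi_{u}^{\cG}(\tilde f)$ is block-diagonal for this decomposition because $\tilde f$ is supported in the subgroupoid $\Sigma_{i}$, and identify each block with a regular representation $\pi_{r(k_{0})}^{\cG_{i}}(f)$ via right translation by a lift $\sigma_{0}$ of an orbit representative. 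This computation does go through: centrality of the $\mathbb{T}$-action makes right translation by $\sigma_{0}$ a $\mathbb{T}$-equivariant bijection, a different lift changes $\sigma_{0}$ only by a scalar in $\mathbb{T}$ and hence conjugates $U$ by a phase, and the intertwining identity reduces to $\tilde f(\tau\sigma_{0}(\tau'\sigma_{0})\inv)=f(\tau\tau'^{-1})$. Taking $k_{0}=u$ recovers $\pi_{u}^{\cG_{i}}(f)$ as one of the blocks, which gives the reverse inequality and hence isometry and injectivity. So your argument buys a self-contained proof at the cost of length; the paper's buys brevity at the cost of a citation. The only cosmetic point worth tightening is the justification that the orbit subspaces $H_{[k]}$ are mutually orthogonal with dense span (immediate from the fact that the orbits partition $\cG u$ and the counting-measure description of $L^{2}(\cG u;\Sigma u)$), which you assert without comment.
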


\begin{proof}
   Lemma~\ref{lem:clopen in ZS-product if r-discrete} implies that $\cG_{i}$ is an open subgroupoid of $\cG$ since $\cG_{j}$ is $r$-discrete. Furthermore, by Condition~\ref{it:IZS:saturated}, we know that $\Sigma_{i}=\pi\inv(\cG_{i})$. The claim 
   about reduced C*-algebras
   now follows from an application of \cite[Lemma~2.7]{BFPR:GammaCartan}. 
   For the full C*-algebras, the claim follows directly from the universal property of $C^*(\cG_{i};\Sigma_{i})$ with respect to $*$-representations of $C_c(\cG_{i};\Sigma_{i})$.
\end{proof}

\begin{theorem}\label{thm:MT give C*-blends}
   Suppose $(\cG_{1},\cG_{2})$ is a pair of matched \emph{\etale} groupoids and that $(\Sigma_{1},\Sigma_{2})$ is a pair of matched twists
   with \wordforPhi\ $\Phi$
   that covers $\cG=\cG_{1}\bowtie\cG_{2}$.
    With 
    $\iota^{\red}_{1},\iota^{\red}_{2}$ and $\iota^{\full}_{1},\iota^{\full}_{2}$
    the maps from Lemma~\ref{lem:MT gives incl}, the 
    quintuples
    \begin{align*}
    \Bigl(
        C^*_{\red}(\cG_{1};\Sigma_{1}),
        \iota^{\red}_{1};
        C^*_{\red}(\cG_{2};\Sigma_{2}),
        \iota^{\red}_{2};
        C^*_{\red}\bigl(\cG_{1}\bowtie\cG_{2}; \Sigma_{1}\ZST \Sigma_{2}\bigr)
    \Bigr)
    \intertext{and}
    \Bigl(
        C^*(\cG_{1};\Sigma_{1}),
        \iota^{\full}_{1};
        C^*(\cG_{2};\Sigma_{2}),
        \iota^{\full}_{2};
        C^*\bigl(\cG_{1}\bowtie\cG_{2}; \Sigma_{1}\ZST \Sigma_{2}\bigr)
    \Bigr)
    \end{align*}
    are C*-blends.
    Moreover, the blend of reduced C*-algebras is \adjforBlend.
\end{theorem}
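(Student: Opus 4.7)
The plan is to reduce the problem to writing every element of $C_c(\cG_{1}\bowtie\cG_{2}; \Sigma_{1}\ZST\Sigma_{2})$ supported (in the $\Pi_{1,2}$-sense) on a small bisection as a \emph{single} convolution product $\iota_1(f_1) * \iota_2(f_2)$, thereby obtaining density on the nose rather than through approximation. Lemma~\ref{lem:MT gives incl} already supplies the $*$-homomorphisms landing inside the twisted C*-algebra itself, together with injectivity of $\iota_i^{\red}$, so conditions (B1), (B2), and the ``contained in $A$'' portion of (B3) are automatic, and the austere property of the reduced blend will follow immediately from the density half of (B3).

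For density, I would first note that products $U_{1} U_{2}$ with $U_{i} \subseteq \cG_{i}$ an open bisection form a basis for the topology of $\cG_{1}\bowtie\cG_{2}$ (via Lemma~\ref{lem:2 unique decomp in ZS} together with Lemma~\ref{lem:decomposing e_{2}e_{1}}\ref{it:IZS:mult open} applied to the internal \ZS\ structure from Theorem~\ref{thm:IZS=MT}\ref{it:thm:from MT to IZS}). By Lemma~\ref{lem:C_c(G;Sigma)=span}, it therefore suffices to handle a single $f$ with $\Pi_{1,2}(\suppo f) \subseteq U_{1} U_{2}$, and by shrinking I may assume $s(U_{1}) = r(U_{2}) = V \subseteq \cU$. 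Choosing continuous sections $\mathfrak{s}_{i}\colon U_{i} \to \Sigma_{i}$ (available by local triviality of each twist), the formula $\mathfrak{s}(x g) \coloneq \iota^{1}_{1,2}(\mathfrak{s}_{1}(x))\, \iota^{2}_{1,2}(\mathfrak{s}_{2}(g))$ defines a continuous section of $\Pi_{1,2}$ over $U_{1} U_{2}$ (using Proposition~\ref{prop.external.twist} and~\eqref{eq:EZS:multiplication}), so $f = h \cdot \mathfrak{s}$ for a unique $h \in C_c(U_{1} U_{2})$, corresponding via the bisection property to some $H \in C_c(V)$. I would then define $f_{i} \in C_c(\cG_{i}; \Sigma_{i})$ by prescribing $f_{1}(\mathfrak{s}_{1}(x)) = H(s(x))$ and $f_{2}(\mathfrak{s}_{2}(g)) = \chi(g)$, where $\chi \in C_c(U_{2})$ is a bump function equal to $1$ on a neighborhood of $r^{-1}(\supp H)$, and verify $\iota_{1}(f_{1}) * \iota_{2}(f_{2}) = f$ directly from the convolution formula.

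The main obstacle is this last equality. The convolution $(\iota_{1}(f_{1}) * \iota_{2}(f_{2}))(\xi)$ is \emph{a priori} a sum over one $\mathbb{T}$-lift per $\mathbb{T}$-orbit in $\cG s(\xi)$, and one has to see it collapse to a single nonzero term whose value is precisely $f(\xi)$. This is where the internal \ZS\ structure plays its decisive role: Lemma~\ref{lem:decomposing e_{2}e_{1}}\ref{it:decomposing e2e1:uniqueness} tells us that writing $\xi = e \cdot f$ with $e \in \iota^{1}_{1,2}(\Sigma_{1})$ and $f \in \iota^{2}_{1,2}(\Sigma_{2})$ is unique up to the diagonal $\mathbb{T}$-action, on which $\iota_{i}(f_{i})$ is $\mathbb{T}$-equivariant in a compensating way, so that only one orbit of contributors appears. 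Note that Lemma~\ref{lem:BPRRW's Lemma 15} is not needed in this approach --- we obtain exact equality rather than a $\|\cdot\|_{\infty}$-approximation, and the conclusion applies uniformly to both the full and reduced twisted C*-algebras.
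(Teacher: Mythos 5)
Your strategy is sound and genuinely different from the paper's. The paper never attempts an exact factorisation: it shows that the products $f_{1}\circledast f_{2}$ of $n$-homogeneous functions separate points of $\Sigma$, invokes Stone--Weierstrass to get \emph{uniform} approximation in $C_{0}(\Sigma)$, restores supports with bump functions, restores $\mathbb{T}$-equivariance with the averaging operator $T$ of Lemma~\ref{lem:making equiv fcts}, and finally converts the $\|\cdot\|_{\infty}$-estimate into a reduced/full-norm estimate via Lemma~\ref{lem:BPRRW's Lemma 15}. You instead write each bisection-supported generator \emph{exactly} as a single convolution $\iota_{1}(f_{1})*\iota_{2}(f_{2})$, using a local section of the twist and the fact that the decomposition $\xi=ef$ is unique modulo the diagonal $\mathbb{T}$-action, so that the convolution sum collapses to one term ($\Pi(\eta)\in\cG_{2}$ and $\Pi(\sigma\eta\inv)\in\cG_{1}$ force $\Pi(\eta)=\Pi(f)$ because $\cG_{1}\cap\cG_{2}=\cU$). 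This buys you exact membership of a dense spanning set in $\ran(\iota_{1}\odot\iota_{2})$, making $T$, Stone--Weierstrass, and Lemma~\ref{lem:BPRRW's Lemma 15} unnecessary, and it treats the reduced and full cases uniformly. The trade-off is that you must produce explicit continuous sections, which the paper's softer argument avoids.

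Two technical points need patching before the argument is airtight. First, local triviality of $\Sigma_{i}$ gives a continuous section only over \emph{sufficiently small} open sets, not over an arbitrary precompact open bisection $U_{i}$; you should shrink the base to products $U_{1}U_{2}$ where each $U_{i}$ is additionally a trivialising neighbourhood (this is still a base). Second, Lemma~\ref{lem:C_c(G;Sigma)=span} as stated only controls the \emph{open} support, so $\Pi(\supp f)$ may meet the boundary of $U_{1}U_{2}$; then $h=f\circ\mathfrak{s}$ need not lie in $C_{c}(U_{1}U_{2})$ and your $H$ need not lie in $C_{c}(V)$, so $f_{1}$ might fail to extend continuously by zero. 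The cure is exactly the device the paper builds into its base $\mathfrak{U}$ at~\eqref{eq:base for ZS}: require $\overline{U_{i}}\subseteq V_{i}$ for larger (trivialising) bisections $V_{i}$ and carry out your construction over $V_{1}V_{2}$, where $\overline{U_{1}U_{2}}$ is compact (alternatively, observe that the partition-of-unity proof of Lemma~\ref{lem:C_c(G;Sigma)=span} actually yields $\pi(\supp f)\subseteq U$ for the \emph{closed} support). With these adjustments your proof goes through.
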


\begin{remark}
    Because of Theorem~\ref{thm:IZS=MT}, we could have likewise said that any \emph{internal} \ZS\ structure $(\Sigma_{1},\Sigma_{2})$ of a twist $\Sigma$ gives rise to a C*-blend.
\end{remark}

\begin{remark}
    Applying Theorem~\ref{thm:MT give C*-blends} to the situation of trivial twists (see Remark~\ref{rmk:MT in the trivial case,p2}), we recover \cite[Theorem 13]{BPRRW:ZS}: the quintuple
    \[
     \Bigl(
        C^*(\cG_{1}),
        \iota^{\full}_{1};
        C^*(\cG_{2}),
        \iota^{\full}_{2};
        C^*(\cG_{1}\bowtie\cG_{2})
    \Bigr)
    \]
    is a C*-blend. 
\end{remark}

\begin{remark}
    If the unit space $\cU$ is not a point, then the C*-blend of Theorem~\ref{thm:MT give C*-blends} is not a C*-alloy. Indeed, take two distinct points $u_{1}, u_{2}$ in $\cU$. Use Urysohn to get functions $f_{1},f_{2}\in  C_0(\cU)$ with $f_{i} (u_{i})=1$ and $f_{i}(u_j)=0$ for $j\neq i$. Then $f_{1} f_{2}=f_{2}f_{1}$ in $C^*_{\red}(\cG;\Sigma)$ even though $f_{1}\odot f_{2} \neq f_{2} \odot f_{1}$ in $C^*_{\red}(\cG_{1};\Sigma_{1})\odot C^*_{\red}(\cG_{2};\Sigma_{2})$.
\end{remark}

\begin{example}\label{ex:A_theta as a C*-blend}
    One of the motivating examples of a C*-blend is the crossed product: if $(A,\alpha,\Gamma)$ is  a C*-dynamical system, i.e., $\alpha$ denotes 
    an action of a \LCH\ group
    $\Gamma$ on a C*-algebra $A$, then
    \begin{equation}\label{eq:ex:CP are blends}
    \bigl(
        A
        ;
        C^*_{\red}(\Gamma);
        A\rtimes_{\alpha,r} \Gamma
    \bigr)
    \quad\text{and}\quad
    \bigl(
        A
        ;
        C^*(\Gamma);
        A\rtimes_{\alpha} \Gamma
    \bigr)
    \end{equation}
    are C*-blends.
    See \cite[Proposition 3.4]{Exel:Blends} for more details. (Exel proves it for the full C*-algebras but points out that it likewise holds for the reduced.)
    
    We can, of course, see some subexamples of this appear in our Theorem~\ref{thm:MT give C*-blends}. For instance,  for a fixed $\theta\in\mathbb{R}$, define the $2$-cocycle $c_{\theta}$ on the group $\cG=\mathbb{Z}^2$ by 
    \[
        c_{\theta}\bigl((m_{1},m_{2}),(n_{1},n_{2})\bigr)=\mathsf{e}^{2\pi i \theta m_{2}n_{1}}.
    \]
    Using Examples~\ref{ex:connector:rotation 2-cocycles} and~\ref{ex:connector:rotation 2-cocycles,ctd}, Theorem~\ref{thm:MT give C*-blends} yields that
    \[
    \Bigl(
        C^*\bigl(\mathbb{Z}\times\{0\}\bigr)
        ;
        C^*\bigl(\{0\}\times\mathbb{Z}\bigr);
        C^*\bigl(\mathbb{Z}^2, c_{\theta}\bigr)
    \Bigr)
    \]
    is a  C*-blend.
    To put this into the framework of crossed products, we identify
    $C^*(\mathbb{Z}^2, c_{\theta})$
    with the crossed product $ C(\mathbb{T})\rtimes_\theta \mathbb{Z}$
    of $\mathbb{Z}$ acting on $C(\mathbb{T})$ by rotation by $\theta$: the *-isomorphism is determined by mapping the generators $\delta_{(1,0)}$ and $\delta_{(0,1)}$ of 
    $C^*_{\red}(\mathbb{Z}^2, c_{\theta})=C^*(\mathbb{Z}^2, c_{\theta})$
    to the generators $z\delta_{0}$  and $\operatorname{const}_{1}\delta_{1}$ of $C(\mathbb{T})\rtimes_\theta \mathbb{Z}$, respectively. The above C*-blend then turns into
    \[
    \bigl(
        C(\mathbb{T})
        ;
        C^*(\mathbb{Z});
        C(\mathbb{T})\rtimes_\theta \mathbb{Z}
    \bigr),
    \]
    which is an example of a C*-blend as in~\eqref{eq:ex:CP are blends}.
\end{example}

\begin{proof}[Proof of Theorem~\ref{thm:MT give C*-blends}]
    Our proof will follow the ideas in \cite[Theorem 13]{BPRRW:ZS}, making the necessary (and quite subtle) adjustments to account for the twist. 

    Write $\cG=\cG_{1}\bowtie\cG_{2}$ and $\Sigma=\Sigma_{1}\ZST \Sigma_{2}$. 
   By definition, the maps $\iota^\red_i$ and $\iota^\full_i$ coincide when restricted to the subalgebra of compactly supported functions: they extend functions on the subgroupoid $\Sigma_{i}$ by $0$ to functions on $\Sigma$. Let us therefore simply write 
   \[
   \iota_{i}\coloneq\iota^\red_i\vert_{C_c(\cG_i;\Sigma_i)}=\iota^\full_i\vert_{C_c(\cG_i;\Sigma_i)}\colon C_c(\cG_i;\Sigma_i) \to C_c(\cG;\Sigma),
   \]
   independently of whether we regard its codomain $C_c(\cG;\Sigma)$ as a subalgebra of $C^*_{\red}(\cG; \Sigma)$ or of $C^*(\cG; \Sigma)$.
    We have seen in Lemma~\ref{lem:MT gives incl} that the maps $\iota^\red_{1}, \iota^\red_{2}$ are injective with codomain $C^*_{\red}(\cG; \Sigma)$, and that the codomain of $\iota^\full_{1}, \iota^\full_{2}$ is $C^*(\cG; \Sigma)$. Thus,
    it suffices to show that the range of $\iota_{1}\odot\iota_{2}$ is dense in $C^*_{\red}(\cG; \Sigma)$  and in $C^*(\cG; \Sigma)$. 
   
   By construction, the element $f_{1}\circledast f_{2} \coloneq  (\iota_{1}\odot\iota_{2})(f_{1}\odot f_{2})$ for $f_{i}\in C_c (\cG_{i};\Sigma_{i})$ is given by
   the element
    \[
        f_{1}\circledast f_{2}\colon e_{1}e_{2}\mapsto f_{1}(e_{1})f_{2}(e_{2})
    \]
    of $C_c(\cG;\Sigma)$.
    We will bootstrap this to more general functions than equivariant ones:

    For $n\in\mathbb{N}^\times$, let $C_c(\cG_{i};\Sigma_{i};n)$ be the subspace of $C_c(\Sigma_{i})$ consisting of elements $f_{i}$ for which
    \(
         \mathmbox{f_{i}(z\cdot e)}
         =
         \mathmbox{z^n f_{i}(e)}
    \) for all $z\in\mathbb{T}$ and all $e\in\Sigma_{i}$. Given $f_{i}\in C_c(\cG_{i};\Sigma_{i};n)$ (for $i=1,2$ and the same $n$), consider the continuous and compactly supported function $\Sigma_{1}\bfpsr \Sigma_{2}\to \mathbb{C}$ defined by $(e_{1},e_{2})\mapsto f_{1}(e_{1})f_{2}(e_{2})$. Since
    \[
        f_{1}(z\cdot e_{1})f_{2}(\overline{z}\cdot e_{2})
        =
        z^n f_{1}(e_{1})\ \overline{z}^n f_{2}(e_{2})
        =
        f_{1}(e_{1})f_{2}(e_{2}),
    \]
    the function factors through a map $\Sigma_{1}\ast_{\mathbb{T}} \Sigma_{2}\to \mathbb{C}$; it is continuous and compactly supported since the quotient map $\Sigma_{1}\bfpsr\Sigma_{2} \to \Sigma_{1}\ast_{\mathbb{T}} \Sigma_{2}$ is open and continuous. Since the space $\Sigma_{1}\ast_{\mathbb{T}} \Sigma_{2}$ is homeomorphic to $\Sigma$ by Theorem~\ref{thm:IZS=MT}\ref{it:thm:from MT to IZS}, we conclude that the map $f_{1}\circledast f_{2}\colon \Sigma\to\mathbb{T}$ given by $e_{1}e_{2}\mapsto  f_{1}(e_{1})f_{2}(e_{2})$ is an element of $C_c(\Sigma)$.
    We claim that these elements form a uniformly dense spanning set of 
    the commutative C*-algebra
    $C_0(\Sigma)$.
    
    Pick any $e\in \Sigma$. By Theorem~\ref{thm:IZS=MT}\ref{it:thm:from MT to IZS} and Condition~\ref{it:IZS:product}, we know that $e=e_{1}e_{2}$ for some $e_{i}\in\Sigma_{i}$. Thanks to Lemma~\ref{lem:making equiv fcts}, we may pick $f_{i}\in C_c(\cG_{i};\Sigma_{i})$ with $f_{i}(e_{i})\neq 0$. Then $f_{1}\circledast f_{2}$ satisfies $(f_{1}\circledast f_{2})(e)=f_{1}(e_{1})f_{2}(e_{2})\neq 0$, meaning that these functions separate the points of $\Sigma$. By Stone--Weierstrass, they thus 
    generate a uniformly dense $*$-subalgebra of $C_0(\Sigma)$.
    For $f_{i},f_{i}'\in C_c(\cG_{i};\Sigma_{i})$, we have
    \[
        (f_{1}\circledast f_{2}) \cdot (f_{1}'\circledast f_{2}')
        =
        (f_{1}\cdot f_{1}')\circledast (f_{2}\cdot f_{2}').
    \]
    As $f_{i}\cdot f_{i}'\in C_c(\cG_{i};\Sigma_{i};2)$, we conclude by induction that the set
    \begin{align*}
        X\coloneq  
        \bigcup_{n\in\mathbb{N}^\times}
        \left\{
        f_{1}\circledast f_{2}
        :
        f_{i}\in C_c(\cG_{i};\Sigma_{i};n)
        \right\}
    \end{align*}
    is closed not just under the involution but also under the (pointwise) multiplication of $C_0(\Sigma)$, so its span is a uniformly dense $*$-subalgebra of $C_0(\Sigma)$.
    To use $\lspan(X)$ to approximate elements of $C^*_{\red}(\cG;\Sigma)$ in the reduced norm
    and elements of $C^*(\cG;\Sigma)$ in the full norm,
    we next must argue that it suffices to consider elements supported in preimages of bisections.
    
Since $\cG_{i}$ is \LCH\ and \etale, its topology has a basis of precompact open bisections. In particular,
 \begin{equation}\label{eq:base for ZS}
 \mathfrak{U}=\{
 U_{1}U_{2}: \overline{U_{i}}\subseteq V_{i}\subseteq \cG_{i} \text{ and } U_{i}, V_{i} \text{ are precompact open bisections}\}
 \end{equation}
 is a base for the topology of $\cG$.  Thus, by Lemma~\ref{lem:C_c(G;Sigma)=span},
   \[
        \lspan
        \left\{
        f\in C_c(\cG;\Sigma)
        : 
        \pi(\suppo(f))\subseteq U 
        \text{ for some }U\in\mathfrak{U}
        \right\}
    \]
    is dense in $C_{\red}^*(\cG; \Sigma)$
    and in $C^*(\cG; \Sigma)$.
    To prove that  $\ran (\iota_{1}\odot\iota_{2})$ is dense in $C_{\red}^*(\cG; \Sigma)$
    and in $C^*(\cG; \Sigma)$,
    it therefore suffices to approximate such dense spanning elements in the 
    reduced,
    respectively full,
    norm.

    So suppose we are given $f\in C_c(\cG;\Sigma)$ with $\pi(\suppo(f))\subseteq U_{1} U_{2}$, where $U_{i}$ is an open bisection of $\cG_{i}$ whose closure is contained in another open bisection~$V_{i}$. Fix $\epsilon >0$. If an element $g\in C_c(\cG;\Sigma)$ satisfies $\pi(\suppo(g))\subseteq V_{1}
    V_{2}$ and  $\norm{f-g}_{\infty}<\epsilon$, then Lemma~\ref{lem:BPRRW's Lemma 15} implies 
    both $\norm{f-g}_{\red}<\epsilon$ and
    $\norm{f-g}_{\full}<\epsilon$;
    thus, if we can find such a $g$ in the range of $\iota_{1}\odot\iota_{2}$, we are done.

    Since $\lspan(X)$ is uniformly dense in $C_0(\Sigma)$, there exist finitely many $n_j\in\mathbb{N}^\times$ and finite collections $\{\tilde{f}_{i}^{j}\}_{j}\subseteq C_{c}(\cG_{i};\Sigma_{i};n_{j})$ for $i=1,2$ such that
    \begin{equation}\label{eq:approximating f}
        \norm{ f - \sum\nolimits_{j} \tilde{f}_{1}^{j}\circledast \tilde{f}_{2}^{j}}_{\infty}
        < 
        \epsilon.
    \end{equation}
    To construct the element $g \in \ran (\iota_{1}\odot\iota_{2})$, we will now cut down the  functions $\tilde{f}_{i}^{j}$ so that they  are supported in $\pi\inv(V_{1}V_{2})$.
    
    Let $h_{i}\in C_c(\cG_{i},[0,1])$ be identically $1$ on $U_{i}$ and zero outside of $V_{i}$, and consider the pointwise product $f_{i}^{j}\coloneq  (h_{i}\circ\pi_{i})\cdot\tilde{f}_{1}^{j}$.
    By construction,
    the open support of this continuous function satisfies 
    \[
        \suppo(f_{i}^{j}) \subseteq \pi\inv (V_{i})\cap \suppo(\tilde{f}_{i}^{j}),
    \]
    and is hence precompact by compactness of $\supp(\tilde{f}_{i}^{j})$. Since $h_{i}\circ\pi_{i}$ is $\mathbb{T}$-invariant, we conclude that $f_{i}^{j}$ is, like $\tilde{f}_{i}^{j}$, an element of $C_c(\cG_{i};\Sigma_{i};n_{j})$. Lastly,
    since $\pi(\suppo(f))\subseteq U_{1}U_{2}$,
    it is easy to see that Equation~\eqref{eq:approximating f} implies
    \begin{equation}\label{eq:approximating f,p2}
        \norm{ f - \sum\nolimits_{j} f_{1}^{j}\circledast f_{2}^{j}}_{\infty}
        < 
        \epsilon.
    \end{equation}
    Now that we have adjusted the supports, the next step in our hunt for $g$ in the range of $\iota_{1}\odot\iota_{2}$ is to make the functions $f_{i}^{j}$  $\mathbb{T}$-equivariant. To this end, we will use the map $T$ from Lemma~\ref{lem:making equiv fcts}.
    
    Since $f$ is $\mathbb{T}$-equivariant, we have $f=T(f)$, so that for any $e=e_{1}e_{2}\in \Sigma$,
    \begin{align*}
        &\left|
        \left(f - \sum\nolimits_{j}   T(f_{1}^{j})\circledast f_{2}^{j}
        \right)
        (e)\right|
        =
        \left| T(f)(e) - \sum\nolimits_{j}   T(f_{1}^{j})(e_{1})\, f_{2}^{j}(e_{2})\right|
        \\
        &=
        \left| \int_{\mathbb{T}}z f(z\cdot e)\,\mathrm{d}z
        -
        \sum\nolimits_{j}  
        \left(\int_{\mathbb{T}}z f_{1}^{j}(z\cdot e_{1})\,\mathrm{d}z\right)
        f_{2}^{j} (e_{2})
        \right|
        \\
        &\leq
        \int_{\mathbb{T}}
        \left| f(z\cdot e)
        -
        \sum\nolimits_{j}  
        f_{1}^{j}(z\cdot e_{1})
        f_{2}^{j} (e_{2})
        \right|\,\mathrm{d}z
        \\
        &=
        \int_{\mathbb{T}}
        \left| \left[ f
        -
        \sum\nolimits_{j}  
        f_{1}^{j}\circledast
        f_{2}^{j}\right] (z\cdot e)
        \right|\,\mathrm{d}z
        <
        \epsilon
        \quad\text{by~\eqref{eq:approximating f,p2}},
    \end{align*}
    and a second application of the same trick then shows that we have
    \[
        \left\| f - \sum\nolimits_{j}  T(f_{1}^{j})\circledast T(f_{2}^{j})\right\|_{\infty}
        <
        \epsilon.
    \]
    As $T(f_{i}^{j})\in C_c(\cG_{i};\Sigma_{i})$, the element 
    \[
    g\coloneq  \sum\nolimits_{j}  T(f_{1}^{j})\circledast T(f_{2}^{j})
    \]
    is in the range of $\iota_{1}\odot \iota_{2}$. Since 
    \[
        \suppo(T(f_{i}^{j}))
        \subseteq
        \mathbb{T}\cdot \suppo(f_{i}^{j})
        \subseteq 
        \pi\inv(V_{i}),
    \]
    we have
    \(
        \pi(\suppo(g))
        \subseteq 
        V_{1}V_{2}
    \), and so as argued earlier,
     it follows from $\norm{f-g}_{\infty}<\epsilon$ that $\norm{f-g}_{\red}<\epsilon$
     and $\norm{f-g}_{\full}<\epsilon$
     which finishes our proof that the range of $\iota_{1}\odot \iota_{2}$ is
      dense, both in $C^*_\red (\cG;\Sigma)$ and in $C^*(\cG;\Sigma)$.
\end{proof}

Consider
the statement in Theorem~\ref{thm:MT give C*-blends} about reduced twisted groupoid C*-algebras
 in the situation where $\cG_{1}\bowtie\cG_{2}$ is effective. By Corollary~\ref{cor:ZS and effectiveness}, $\cG_{1},\cG_{2}$ are effective as well, so by \cite[Theorem 5.2]{Renault:Cartan}, the three C*-algebras $C^*_{\red}(\cG_{1};\Sigma_{1})$, $C^*_{\red}(\cG_{2};\Sigma_{2})$, and $C^*_{\red}(\cG_{1}\bowtie\cG_{2};\Sigma_{1}\ZST \Sigma_{2})$ all share the same Cartan subalgebra, $C_0(\cU)$. Note that this Cartan is exactly the intersection of the two subalgebras $C^*_{\red}(\cG_{1};\Sigma_{1})$ and $C^*_{\red}(\cG_{2};\Sigma_{2})$. It turns out that this is no coincidence,
 as we will see in Theorem~\ref{thm:from C*-blend}.

\subsection{Cartan subalgebra and Weyl groupoids}

Let us start by recalling some basic definitions of Cartan subalgebras and the associated Weyl groupoid
and twist.

\begin{definition}[{\cite[Definition 5.1]{Renault:Cartan}, \cite[Definition 2.8]{pitts2022normalizers}}] Let $A$ be a C*-algebra. A subalgebra $D$ is called a Cartan subalgebra if 
\begin{enumerate}
    \item $D$ is maximal abelian inside $A$;
    \item there exists a faithful conditional expectation from $A$ to $D$; and 
    \item the normalizer $N_A(D)=\{n\in A: ndn^*, n^*dn\in D \text{ for all }d\in D\}$ generates $A$ as a C*-algebra.
\end{enumerate}
\end{definition}

By \cite[Theorem 5.2]{Renault:Cartan} (see also \cite[Theorem 1.2]{Raad:2022:Renault} for the non-separable case), any
Cartan pair $(A,D)$ can be realized as $(C_r^*(\cG; \Sigma), C_0(\cU))$, where 
$\Sigma\to\cG$
is a twisted groupoid with unit space $\cU$ and $\cG$ effective. Note that, in particular, the space $\cU$ is exactly
 the Gelfand spectrum $\cU=\widehat{D}$. The groupoids $\cG$ and $\Sigma$ are called the \emph{Weyl groupoid} and the \emph{Weyl twist}, respectively. We will recall 
 how to construct this Weyl pair 
$\Sigma\to\cG$
from $(A,D)$;
 one can refer to \cite{Renault:Cartan} for more detailed discussions.

Fix $n\in N_A(D)$. Since $D$ contains an approximate identity for $A$ \cite[Theorem 2.6]{pitts2022normalizers} and is closed,
$n^*n$ is an element of $D\cong C_0(\widehat{D})$. In particular, for $x\in \widehat{D}$, we may consider $n^*n(x)\in\mathbb{C}$ and
define 
\[\dom(n)\coloneq \left\{x\in\widehat{D}: n^*n(x) > 0\right\}.\]
By \cite{Kum:Diags}, there exists a unique homeomorphism $\alpha_n\colon \dom(n) \to \dom(n^*)$ such that for all $d\in D$ and $x\in\dom(n)$,
\[n^*dn (x) = d(\alpha_n(x)) n^*n(x).\]
Write 
\begin{equation}\label{eq:E}
    E\coloneq\{(n,x): n\in N_{A}(D), x\in \dom(n)\}
\end{equation}
and define the following equivalence relations\footnote{See \cite[Proposition 2.2]{DGN:Weyl}, which proves that definition of ${\sim}$ given here indeed coincides with that given in \cite{Renault:Cartan}.} $\sim$ and $\approx$ on $E$:
\begin{align*}
(n,x) \sim (m,y) \text{ if } &x=y\text{ and
there exist }d,d'\in \widehat{D}
\text{ such that }
\\& d(x),d'(x)\neq 0\text{ and } nd=md';
\\
(n,x) \approx (m,y) \text{ if } &x=y\text{ and
there exist }d,d'\in \widehat{D}
\text{ such that }
\\& d(x),d'(x)>0\text{ and } nd=md'.
\end{align*}
The Weyl twist $\Sigma$ 
of $(A,D)$ is defined as the quotient space $E/{\approx}$. We denote the equivalence class of $(n,x)\in E$ under $\approx$ by $\llbracket n,x\rrbracket$, and give $\Sigma$ the structure maps
\[
    \llbracket m,\alpha_n(x)\rrbracket\llbracket n,x\rrbracket=\llbracket mn,x\rrbracket
    \quad\text{and}\quad
    \llbracket n,x\rrbracket\inv = \llbracket n^*,\alpha_n(x)\rrbracket.
\]
The basic open sets for the topology are
the sets $W_\Sigma(n,U,V)$, indexed by $n \in N_A (D)$ and open sets $U \subseteq\mathbb{T}$ and $V\subseteq\widehat{D}$ and defined by
\[
W_\Sigma(n,U,V)=\{\llbracket zn, x\rrbracket : z \in U, x\in V\cap \dom(n) \}.
\]
Similarly, the Weyl groupoid $\cG$ is defined as the quotient space $E/{\sim}$; its elements are denoted $[n,x]$, and its structure maps are given by the same formulas as those for $\Sigma$, replacing double-brackets with regular brackets. We give $\cG$ the topology that makes the surjective map
\[
\Sigma\overset{\pi}{\to}\cG,  \quad\llbracket n, x\rrbracket\mapsto [n,x],
\]
continuous; it is automatically open. 
Note that
the source of an element $\llbracket n, x\rrbracket$ of $\Sigma$ is given by
\[
s(\llbracket n,x\rrbracket) = \llbracket n^*n,x\rrbracket.
\]
As $n^*n(x)>0$, we see that
the unit space $\cU$ of $\Sigma$ can be identified with $\widehat{D}$. More precisely, for $x\in\widehat{D}$, choose any $f_x\in D$ with $f_x(x)>0$. Then the map $x\mapsto \llbracket f_x,x\rrbracket$ is a homeomorphism $\widehat{D}\to \cU$.
To realize  $\Sigma$ as a twist
\[
\mathbb{T}\times \cU    \stackrel{\jmath}{\longrightarrow} \Sigma  \stackrel{\pi}{\longrightarrow} \cG
\]
over $\cG$, we let $\jmath(z,x)=\llbracket f, x\rrbracket$ where $f\in D$ is any element such that $f(x)=z$.

\begin{remark}\label{rmk:W_Sigma}
    When describing a topology base for $\Sigma$, we do not actually need the leeway of an open neighborhood $V$ around $x\in\dom(n)$, as long as $n$ is allowed to vary over all of $N_{A}(D)$. Indeed, for any $f\in D$ and $n\in N_{A}(D)$, the element $fn$ is also a normalizer. Since $\llbracket zn,x\rrbracket=\llbracket z(nf),x\rrbracket$ for any $f\in D$ with $f(x)>0$, any element $f\in C_0(\widehat{D},[0,1]) $ with $\suppo(f)=V$ yields
    \[
        W_\Sigma(n,U,V)=
        \{\llbracket z(nf), x\rrbracket : z \in U, x\in V\cap \dom(n)=\dom(nf) \}
        =
        W_\Sigma(nf,U,\widehat{D}).
    \]
\end{remark}

We will soon need the following easy corollary of \cite[Proposition 4.1]{DGNRW:Cartan}:
\begin{lemma}\label{lem:DGNRW-Prop4.1}
    If $(A,D)$ is a Cartan pair and a set $N\subseteq N_A(D)$ densely spans $A$, then every element of the Weyl twist can be written as $\llbracket z n, x\rrbracket$ for some $z\in\mathbb{T}$, $n\in N $, and $x\in\dom(n)$.
\end{lemma}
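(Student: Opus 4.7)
The plan is to reduce directly to the cited Proposition 4.1 of \cite{DGNRW:Cartan}. That proposition, given the setup here, says (or at least implies) that when $N \subseteq N_A(D)$ has dense linear span in $A$, the collection
\[
\{ W_\Sigma(n, U, V) : n \in N,\ U \subseteq \mathbb{T} \text{ open},\ V \subseteq \widehat{D} \text{ open} \}
\]
forms a base for the topology of the Weyl twist $\Sigma$. So the main content of the lemma has already been done; what remains is simply to unpack the definition of $W_\Sigma(n,U,V)$.

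The proof itself I would write in two short steps. First, let $\sigma \in \Sigma$ be arbitrary; by the very construction of $\Sigma = E/{\approx}$ (see \eqref{eq:E}), we may write $\sigma = \llbracket m, x \rrbracket$ for some $m \in N_A(D)$ and $x \in \dom(m)$. Second, apply \cite[Proposition 4.1]{DGNRW:Cartan} to the spanning set $N$: the singleton $\{\sigma\}$ is contained in \emph{some} basic open set, so $\sigma \in W_\Sigma(n, U, V)$ for some $n \in N$, open $U \subseteq \mathbb{T}$, and open $V \subseteq \widehat{D}$. By the definition of $W_\Sigma(n,U,V)$, this means
\[
    \sigma = \llbracket z n, y \rrbracket \quad\text{for some } z \in U \subseteq \mathbb{T} \text{ and } y \in V \cap \dom(n) \subseteq \dom(n),
\]
which is exactly a representative of the required form.

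There is no real obstacle: the only potentially non-obvious point is which statement one cites for \cite[Proposition 4.1]{DGNRW:Cartan}, and this is why the excerpt signposts the lemma as an ``easy corollary''. If for some reason that proposition is phrased slightly differently (for instance, giving density of $\bigcup_{n \in N} W_\Sigma(n,\mathbb{T},\widehat{D})$ in $\Sigma$ rather than a base statement), the argument adapts trivially via Remark~\ref{rmk:W_Sigma}, which shows that one can absorb the open set $V$ into the normalizer by replacing $n$ with $nf$ for a suitable $f \in D$ — and $nf$ need not lie in $N$, but since any $\sigma \in W_\Sigma(nf,U,\widehat{D})$ is already of the form $\llbracket zn', y\rrbracket$ with $n' = nf$, one would then re-derive the claim by choosing a small enough neighborhood and using openness of $\dom(n)$.
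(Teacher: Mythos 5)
There is a gap here, and it sits exactly where the lemma has its (small but real) content. Your argument rests on reading \cite[Proposition 4.1]{DGNRW:Cartan} as saying that the sets $W_\Sigma(n,U,V)$ with $n$ ranging only over $N$ form a base for the topology of the Weyl \emph{twist} $\Sigma$. That is not the statement the paper actually invokes: the available result (cited in the paper's proof) is about the Weyl \emph{groupoid}, namely that every element of $\cG$ can be written as $[n,x]$ with $n\in N$ and $x\in\dom(n)$. If the cited proposition already delivered a base for $\Sigma$ indexed by $N$, the present lemma would be an immediate unpacking of notation, as you say --- but then there would be nothing to prove, and your argument would essentially be assuming the conclusion under an unverified reading of the citation. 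Your fallback paragraph (absorbing $V$ into the normalizer via Remark~\ref{rmk:W_Sigma}) does not repair this, because it again only manipulates the topology of $\Sigma$ and never addresses how to pass from a statement about $\cG$ to one about $\Sigma$; it also runs into the problem you yourself note, that $nf$ need not lie in $N$.

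The missing step is the lift through the central extension, and it is one line: given $e\in\Sigma$, write $\pi(e)=[n,x]$ with $n\in N$ using the groupoid-level result; since $\pi(\llbracket n,x\rrbracket)=[n,x]=\pi(e)$ and the fibres of $\pi$ are single $\mathbb{T}$-orbits (by the twist axioms \ref{it:twist:jmath}--\ref{it:twist:T central}), there exists $z\in\mathbb{T}$ with $e=z\cdot\llbracket n,x\rrbracket=\llbracket zn,x\rrbracket$. That lift is the entire point of stating the lemma as a corollary rather than a citation, and your proposal omits it.
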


\begin{proof}
   Take any element $e$ of the Weyl twist. It was shown in \cite{BG:2022:DGNRW}  that the element $\pi(e)$ of the associated Weyl groupoid can be written as $[n,x]$ for some $n\in N$. As $\pi(e)=[n,x]=\pi(\llbracket n, x\rrbracket)$, there exists $z\in\mathbb{T}$ such that $e=z\cdot \llbracket n,x\rrbracket= \llbracket zn,x\rrbracket$.
\end{proof}

The main goal of this section is to prove the following theorem
which can be understood as a partial converse to Theorem~\ref{thm:MT give C*-blends}.

\begin{theorem}\label{thm:from C*-blend}
    Suppose that $(A_{1}; A_{2}; A)$ is an \adjforBlend\ C*-blend.
    Assume that $D\coloneq  A_{1}\cap A_{2}$ is a Cartan subalgebra of $A_{1}, A_{2}$, and $A$; let 
        $\Sigma_{i}\to\cG_{i}$
    denote the Weyl pair of the Cartan pair  $(A_{i}, D)$,
    and $\Sigma\to\cG$ that of $(A,D)$.
    Then $(\Sigma_{1},\Sigma_{2})$ is an internal \ZS\ structure for $\Sigma$. 
    Moreover, the map $\Phi\colon \Sigma_{2} \ast_{\mathbb{T}} \Sigma_{1} \to \Sigma_{1} \ast_{\mathbb{T}} \Sigma_{2}$ given for $m_{j}\in N_{j}$ and $x\in \dom(m_{2}m_{1})$ by
    \[
        \Phi
        \left(
        \llbracket m_{2},\alpha_{m_{1}}(x)
        \rrbracket
        \asttwoone
        \llbracket m_{1},x
        \rrbracket
        \right)
        =
        \llbracket n_{1},\alpha_{n_{2}}(x)
        \rrbracket
        \asttwoone
        \llbracket n_{2},x
        \rrbracket
        \quad\text{ where }m_{2}m_{1}g=n_{1}n_{2}
    \]
    for some $n_{j}\in N_{j}$ and $g\in D$ with $g(x)>0$, is the unique \wordforPhi\ 
    such that $\Sigma\cong \Sigma_{1}\ZST\Sigma_{2}$ as twists.
\end{theorem}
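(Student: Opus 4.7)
The plan is to invoke Theorem~\ref{thm:IZS=MT}\ref{it:thm:from IZS to MT}: once we realise $(\Sigma_{1},\Sigma_{2})$ as an internal \ZS\ structure for $\Sigma$, that theorem automatically supplies a unique \wordforPhi\ $\Phi$ under which $\Sigma\cong\Sigma_{1}\ZST\Sigma_{2}$, and the concrete formula for $\Phi$ will then fall out of the rule $\Phi(f\asttwoone e)=e'\astonetwo f'$ whenever $fe=e'f'$ in $\Sigma$.

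First I would realise each $\Sigma_{i}$ as a closed subgroupoid of $\Sigma$. Since $D\subseteq A_{i}\subseteq A$ and $D$ is Cartan in both $A_{i}$ and $A$, every $n\in N_{A_{i}}(D)$ is automatically in $N_{A}(D)$ with the same intrinsic data $n^{*}n$, $nn^{*}$, and $\alpha_{n}$. The assignment $\llbracket n,x\rrbracket_{\Sigma_{i}}\mapsto\llbracket n,x\rrbracket_{\Sigma}$ is then well defined, since any identity $nd=md'$ witnessing $\approx_{\Sigma_{i}}$ also witnesses $\approx_{\Sigma}$, and it is continuous and injective. Using Remark~\ref{rmk:W_Sigma} together with Fell's criterion, one checks that the embedding is a homeomorphism onto its image, and closedness follows by a standard argument reducing, via continuity of $\pi$, to showing that $\cG_{i}$ is closed in $\cG$.

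Next I would verify the two internal \ZS\ conditions. For Condition~\ref{it:IZS:product}, the Cartan condition forces $N_{A_{i}}(D)$ to densely span $A_{i}$, and the C*-blend assumption~\ref{it:C*B:dense range} then implies that $\{n_{1}n_{2}:n_{i}\in N_{A_{i}}(D)\}$ densely spans $A$; Lemma~\ref{lem:DGNRW-Prop4.1} thereby guarantees that every element of $\Sigma$ has the form $\llbracket zn_{1}n_{2},x\rrbracket=z\cdot\llbracket n_{1},\alpha_{n_{2}}(x)\rrbracket\cdot\llbracket n_{2},x\rrbracket\in\Sigma_{1}\cdot\Sigma_{2}$. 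For Condition~\ref{it:IZS:cap}, the containment $\jmath(\mathbb{T}\times\cU)\subseteq\Sigma_{i}$ is immediate because $D\subseteq A_{i}$. Conversely, if $\sigma\in\Sigma_{1}\cap\Sigma_{2}$ is represented both by $\llbracket n_{1},x\rrbracket$ with $n_{1}\in N_{A_{1}}(D)$ and by $\llbracket n_{2},x\rrbracket$ with $n_{2}\in N_{A_{2}}(D)$, then $n_{1}g_{1}=n_{2}g_{2}$ for some $g_{i}\in D$ positive at $x$; the common value $d\coloneq n_{1}g_{1}$ therefore lies in $A_{1}\cap A_{2}=D$ and satisfies $|d(x)|^{2}=|g_{1}(x)|^{2}\,n_{1}^{*}n_{1}(x)>0$. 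Choosing any $h\in D$ positive at $x$ and setting $h_{1}\coloneq g_{1}h$, $h_{2}\coloneq h$ gives $n_{1}h_{1}=dh_{2}$, whence $(n_{1},x)\approx(d,x)$ and $\sigma=\llbracket d,x\rrbracket\in\pi\inv(\cU)=\jmath(\mathbb{T}\times\cU)$.

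Finally, given $m_{j}\in N_{A_{j}}(D)$ with $x\in\dom(m_{2}m_{1})$, the product in $\Sigma$ satisfies $\llbracket m_{2},\alpha_{m_{1}}(x)\rrbracket\cdot\llbracket m_{1},x\rrbracket=\llbracket m_{2}m_{1},x\rrbracket$, and Condition~\ref{it:IZS:product} rewrites this as $\llbracket\tilde n_{1},\alpha_{\tilde n_{2}}(x)\rrbracket\cdot\llbracket\tilde n_{2},x\rrbracket=\llbracket\tilde n_{1}\tilde n_{2},x\rrbracket$ for some $\tilde n_{j}\in N_{A_{j}}(D)$. Unpacking $\approx$ yields $g,g'\in D$ positive at $x$ with $m_{2}m_{1}g=\tilde n_{1}\tilde n_{2}g'$; absorbing $g'$ into $\tilde n_{2}$ (by setting $n_{2}\coloneq\tilde n_{2}g'$, which remains in $N_{A_{2}}(D)$ with $x\in\dom(n_{2})$ and $\alpha_{n_{2}}=\alpha_{\tilde n_{2}}$) and writing $n_{1}\coloneq\tilde n_{1}$ delivers the claimed identity $m_{2}m_{1}g=n_{1}n_{2}$, whence Theorem~\ref{thm:IZS=MT}\ref{it:thm:from IZS to MT} forces $\Phi(\llbracket m_{2},\alpha_{m_{1}}(x)\rrbracket\asttwoone\llbracket m_{1},x\rrbracket)=\llbracket n_{1},\alpha_{n_{2}}(x)\rrbracket\astonetwo\llbracket n_{2},x\rrbracket$. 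The main obstacle is the careful identification of each $\Sigma_{i}$ as a closed subgroupoid of $\Sigma$ with the correct subspace topology; the rest is a bookkeeping exercise in the normalizer algebra of a Cartan pair.
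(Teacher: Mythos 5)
Your overall strategy coincides with the paper's: embed each $\Sigma_{i}$ into $\Sigma$, verify Conditions~\ref{it:IZS:product} and~\ref{it:IZS:cap}, and then let Theorem~\ref{thm:IZS=MT}\ref{it:thm:from IZS to MT} produce the unique \wordforPhi\ and the formula for $\Phi$. Your arguments for~\ref{it:IZS:product} (density of $N_{1}N_{2}$ via the blend condition plus Lemma~\ref{lem:DGNRW-Prop4.1}) and for~\ref{it:IZS:cap} (the element $n_{1}g_{1}=n_{2}g_{2}$ lands in $A_{1}\cap A_{2}=D$ and does not vanish at $x$) match Lemma~\ref{lem:intersection} and the corresponding part of the paper's proof.

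However, there is a genuine gap: Definition~\ref{def:IZS} requires the $\Sigma_{i}$ to be \emph{closed} subgroupoids of $\Sigma$, and you dispose of this by saying that closedness ``follows by a standard argument reducing, via continuity of $\pi$, to showing that $\cG_{i}$ is closed in $\cG$'' --- but you never show that $\cG_{i}$ is closed in $\cG$, and there is no standard argument for it. What comes essentially for free (and is what the paper proves in Lemma~\ref{lem:open subgroupoids}) is that the embeddings are \emph{open}; an open subgroupoid of an \etale\ groupoid need not be closed. The paper's proof of closedness is the most delicate part of the whole theorem: given a net $\{\llbracket n_{\lambda},x_{\lambda}\rrbracket\}\subseteq\Sigma_{1}$ converging to $\llbracket n_{1}n_{2},x\rrbracket$, one multiplies by a carefully cut-down copy of $n_{1}^{*}$ to produce a net $\xi_{\lambda}=\llbracket hn_{1}^{*}n_{\lambda},x_{\lambda}\rrbracket$ that still lies in $\Sigma_{1}$ but converges to $\llbracket n_{2},x\rrbracket\in\Sigma_{2}$; openness of $\Sigma_{2}$ then forces $\xi_{\lambda}\in\Sigma_{1}\cap\Sigma_{2}=\jmath(\mathbb{T}\times\cU)$ eventually, and closedness of $\pi\inv(\cU)$ forces $\llbracket n_{2},x\rrbracket$ into the unit fibre, whence the original limit lies in $\Sigma_{1}$. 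Note that this argument uses the openness of the \emph{other} subgroupoid, the already-established Condition~\ref{it:IZS:cap}, and the product decomposition of the limit point, so it cannot be run for each $\Sigma_{i}$ in isolation at the start of the proof as your ordering suggests. Without some version of this argument your proposal does not establish that $(\Sigma_{1},\Sigma_{2})$ satisfies the definition of an internal \ZS\ structure, and Theorem~\ref{thm:IZS=MT}\ref{it:thm:from IZS to MT} cannot be invoked.
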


\begin{notation*}
In the following, we will work in the setting of Theorem~\ref{thm:from C*-blend}. We will let $\cU=\widehat{D}$ denote the Gelfand spectrum of $D$.
As in~\eqref{eq:E}, we let $E$ be 
the set of representatives for the elements of the Weyl pair
\[\mathbb{T}\times\cU \overset{\jmath}{\to} \Sigma \overset{\pi}{\to}\cG\]
associated to the specific Cartan pair $(A,D)$
from Theorem~\ref{thm:from C*-blend}.

If we add a subscript-$j$ ($j=1,2$) to any  item, then we mean the corresponding item for the Cartan pair $(A_{j},D)$. Lastly, we let
   \(
   N_{j}= N_{A_{j}}(D)
   \) 
   be the normalizers in $A_{j}$. Note that for all six groupoids that we are considering, the unit space can be identified with $\cU$.
\end{notation*}
  
Since the C*-blend is assumed to be \adjforBlend, we can without loss of generality assume $A_{j}$ with a sub-C*-algebra of $A$. Thus
$N_{j} \subseteq N_A(D)$, and so $E_{j}\subseteq E$. We will prove that this implies that the Weyl groupoid and twist of $(A_{j},D)$ are canonically isomorphic to clopen subgroupoids of the Weyl groupoid $\cG$ respectively the Weyl twist $\Sigma$ of $(A,D)$.

\begin{lemma}\label{lem:open subgroupoids}
     In the setting of Theorem~\ref{thm:from C*-blend}, 
    the inclusion $E_{j}\subseteq E$ ($j=1,2$) factors through injective, continuous, open groupoid homomorphisms $
    \iota_{{j}}^{\Sigma}\colon 
    \Sigma_{j}\to \Sigma$ and $\iota_{{j}}^{\cG}\colon
    \cG_{j}\to \cG$, and we have $\iota_{{j}}^{\Sigma}(\Sigma_j)=\pi\inv(\iota_{{j}}^{\cG}(\cG_{j}))$.
\end{lemma}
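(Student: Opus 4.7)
My plan is to pass the set-inclusion $E_j \subseteq E$ through the respective quotients to define $\iota_j^\Sigma$ and $\iota_j^\cG$, and then to verify injectivity, continuity, openness, and the final equality in turn. The central observation I will leverage throughout is that, since $D \subseteq A_j \subseteq A$ (by austereness of the blend) and $N_j \subseteq N_A(D)$, an equation of the form $nd = md'$ with $n, m \in N_j$ and $d, d' \in D$ holds in $A_j$ if and only if it holds in $A$. Hence $\approx_j$ (respectively $\sim_j$) is exactly the restriction of $\approx$ (respectively $\sim$) to $E_j$, so the induced maps on quotients are both well-defined and injective. Moreover, for any $n \in N_j$, the element $n^*n \in D$ and the homeomorphism $\alpha_n$ of $\cU = \widehat{D}$ (uniquely determined by $n^*dn(x) = d(\alpha_n(x)) n^*n(x)$) are computed identically in $A_j$ and in $A$; this immediately implies that $\iota_j^\Sigma$ and $\iota_j^\cG$ respect products, inverses, and units.

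For continuity of $\iota_j^\Sigma$ I will work with the topology base $\{W_\Sigma(n, U, V) : n \in N_A(D)\}$ on $\Sigma$ and the analogous base on $\Sigma_j$. The key intermediate step is to show that, whenever $m \in N_j$, one has $(\iota_j^\Sigma)^{-1}(W_\Sigma(m, U', V')) = W_{\Sigma_j}(m, U', V')$, which reduces again to the equivalence-relation observation from the injectivity step. Then, given any $\llbracket m, x\rrbracket_j$ in the preimage of some $W_\Sigma(n, U, V)$, Lemma~\ref{lem:DGNRW-Prop4.1} lets me assume $m \in N_j$, and picking a basic open $W_\Sigma(m, U', V') \subseteq W_\Sigma(n, U, V)$ produces the needed neighborhood in $\Sigma_j$. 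The argument for $\iota_j^\cG$ is entirely analogous.

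For openness I will show that $\iota_j^\Sigma(W_{\Sigma_j}(m, U, V)) = W_\Sigma(m, U, V)$ for each $m \in N_j$, which is by definition open in $\Sigma$. Consequently, $\iota_j^\Sigma(\Sigma_j) = \bigcup_{m \in N_j} W_\Sigma(m, \mathbb{T}, \cU)$ is itself open in $\Sigma$, and $\iota_j^\Sigma$ is an open map. The equality $\iota_j^\Sigma(\Sigma_j) = \pi^{-1}(\iota_j^\cG(\cG_j))$ then follows easily: the containment $\subseteq$ is just $\pi \circ \iota_j^\Sigma = \iota_j^\cG \circ \pi_j$, and the reverse containment uses $\pi^{-1}([n, x]) = \{\llbracket zn, x\rrbracket : z \in \mathbb{T}\}$ together with closure of $N_j$ under scalar multiplication by $\mathbb{T}$.

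The main obstacle here is purely bookkeeping --- keeping straight which equivalence relation and which topology base is in play at each step --- but I anticipate no substantive mathematical difficulty, since everything flows from the fundamental fact that $A_j$ sits inside $A$ as a genuine sub-C*-algebra.
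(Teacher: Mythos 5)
Your overall architecture matches the paper's: the relations $\approx_j$ and $\sim_j$ are the restrictions of $\approx$ and $\sim$ to $E_j$ because an equation $nd=md'$ with $n,m\in N_j$ and $d,d'\in D$ holds in $A_j$ if and only if it holds in $A$; injectivity, the homomorphism property (via $\alpha_n$ being computed identically in $A_j$ and $A$), openness via images of basic open sets, and the final equality via the $\mathbb{T}$-fibres of $\pi$ all go through as you describe. Your fibre argument for $\pi\inv(\iota_{j}^{\cG}(\cG_j))\subseteq \iota_{j}^{\Sigma}(\Sigma_j)$, using closure of $N_j$ under multiplication by $\mathbb{T}$, is in fact a little cleaner than the paper's rescaling of $m$ by $g(x)/f(x)$.

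The gap is in the continuity step. You reduce it to ``picking a basic open $W_\Sigma(m,U',V')\subseteq W_\Sigma(n,U,V)$'' around a point $\llbracket m,x_0\rrbracket$ with $m\in N_j$, but the existence of such a set is precisely the nontrivial content of the lemma: the ambient basic set is built from a normalizer $n\in N_A(D)$ that need not lie in $A_j$, and nothing in the definition of the topology guarantees that the sets $W_\Sigma(m,U',V')$ indexed by the \emph{specific} representative $m$ form a neighbourhood base at $\llbracket m,x_0\rrbracket$. This is where the paper spends most of its effort: from a witness $(z_0n)f=mg$ of $\llbracket m,x_0\rrbracket\in W_\Sigma(n,U,\widehat{D})$ it arranges $g\geq 0$, and then for $(z,x)$ in a suitable $U'\times V'$ rotates $f$ to $f'=(\overline{f(x)}/|f(x)|)f$ so that $z_1nf'=zmg$ with $z_1=zz_0f(x)/|f(x)|\in U$, giving $\llbracket zm,x\rrbracket=\llbracket z_1n,x\rrbracket\in W$. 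You would need either to reproduce this computation or to cite the local triviality of the Weyl twist (continuity of $(z,x)\mapsto\llbracket zm,x\rrbracket$ on $\mathbb{T}\times\dom(m)$); as written the assertion is unsupported. A minor point: invoking Lemma~\ref{lem:DGNRW-Prop4.1} to ``assume $m\in N_j$'' is unnecessary, since elements of $\Sigma_j$ carry representatives in $N_j$ by definition of $E_j$.
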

\begin{proof}
    From the definition of 
   the equivalence relation that gives rise to $\Sigma$ and $\Sigma_{j}$, it is
    clear that two elements of $E_{j}$ are equivalent in $E$ if and only if they are equivalent in $E_{j}$.      In other words, the inclusion $E_{j}\subseteq E$ factors through a map 
    \[
    \iota\coloneq  \iota_{{j}}^{\Sigma} \colon \Sigma_{j}\to \Sigma,
    \quad
    \llbracket n,x\rrbracket_{j} \mapsto \llbracket n,x\rrbracket.
    \]
    Since $N_{j} \subseteq N_A(D)$, we
    have $\llbracket n,x\rrbracket_{j} \subseteq \llbracket n,x\rrbracket$; the map $\iota $ is nevertheless injective.
    Since the product and inversion of $\Sigma_{j}$ is exactly the product resp.\ inversion of $\Sigma$ when restricted to $\Sigma_{j}$, we conclude that $\iota$ is a homomorphism. 
    
To see continuity, fix a basic open set $W
= W_{\Sigma}(n,U,\widehat{D})
$ of $\Sigma$
(see Remark~\ref{rmk:W_Sigma}),
i.e.,
    \[
    W= \left\{\llbracket zn, x\rrbracket : z \in U, x\in 
    \dom(n) \right\}
    \]
    for some fixed $n\in N_A(D)$, $U\subseteq \mathbb{T}$ open.
    Suppose that $\llbracket m , x_0\rrbracket_{j}\in\iota\inv (W)$, so $m\in N_{j}$ and $x_0\in 
    \dom(n)\cap \dom(m )$, and there exist $z_0 \in U$ and $f,g\in D$ with 
    \begin{equation}\label{eq:z_0nf=mg}
        f(x_0), g(x_0) >0
        \quad\text{and}\quad
        (z_0 n)f = m  g.
    \end{equation}
    By replacing $g$ with $gg^*$ and $f$ with $fg^*$, we can without loss of generality assume that $g\geq 0$
    everywhere.

    Since the function 
    \[\mathbb{T}\times\suppo(f)\to \mathbb{T},\quad (z,x)\mapsto z z_0 f(x)/|f(x)|,
    \]
    is continuous, the set
    \[
        \{ (z,x) \in \mathbb{T}\times\suppo(f) : z z_0 f(x)/|f(x)| \in U\} \cap
        \mathbb{T}\times \bigl(\suppo(g)
        \cap \dom(n)\bigr)
    \]
    is open. It furthermore contains $(1,x_0)$, as $f(x_0)>0$ and $z_0\in U$; in particular, there exist open neighborhoods $U'\subseteq \mathbb{T}$ of $1$ and 
    \[
    V'\subseteq \suppo(f)\cap \suppo(g) 
    \cap \dom(n)\subseteq \cU
    \]
    of $x_0$ such that, whenever $(z,x)\in U'\times V'$, then $z z_0 f(x)/|f(x)|\in U$. Fix any such $(z,x)$ and let $f'\coloneq  (\overline{f(x)}/|f(x)|)f$ and $z_{1} \coloneq  z z_0 f(x)/|f(x)|$. By choice of $U'\times V'$, $z_0$ is an element of $U$. We have
    \begin{align*}
        z_{1}nf'
        &=
        \left(z z_0 f(x)/|f(x)|\right) n \left(\overline{f(x)}/|f(x)|\right)f
        =
        (z z_0 ) n f
        \overset{\eqref{eq:z_0nf=mg}}{=}
        zmg.
    \end{align*}
    and since $g(x),f'(x)>0$, this implies
    \[
        \llbracket z m, x\rrbracket
        =
        \llbracket z_{1} n, x\rrbracket.
    \]
   As $z_{1}\in U$, this shows that $\llbracket z m, x\rrbracket\in W$. In summary, we have shown that
    \[
    \left\{\llbracket z m, x\rrbracket_{j} : z \in U', x\in V' \cap \dom(m) \right\}
    \subseteq \iota\inv(W).
    \]
    The set on the left-hand side is a basic open set of $\Sigma_{j}$, and
    since $U'$ is a neighborhood of $1\in \mathbb{T}$ and $V'$ is one for $x_0\in \widehat{D}$, this basic open set contains $\llbracket m,x_0\rrbracket_{j}$. Since
    $\llbracket m , x_0\rrbracket_{j}$ was arbitrary in $\iota\inv (W)$, this proves that $\iota\inv (W)$ is open in $\Sigma_{j}$, i.e., $\iota $ is continuous.

    To see that $\iota$ is an open map, just note that
    \begin{align*}
        \iota 
        \bigl(
        \left\{
            \llbracket zn,x\rrbracket_{j}
            : 
            z\in U, x\in\dom(n)
        \right\}
        \bigr)
        =
        \left\{
            \llbracket zn,x\rrbracket
            : 
            z\in U, x\in\dom(n)
        \right\},
    \end{align*}
    meaning that basic open sets are mapped to basic open sets.
    
    The same proofs go through for the map 
    \(
    [ n,x]_{j} \mapsto [ n,x]
    \) on the level of Weyl groupoids, so this map is likewise an injective, continuous, open groupoid homomorphism. (Alternatively, one can use that the surjective maps from the Weyl twists down to the Weyl groupoids are open maps, and then invoke the above.)

    For the last claim, take an arbitrary element $\llbracket n, x\rrbracket$ of $\pi\inv (\iota^{\cG}(\cG_{j}))$. Then there exists $m\in N_{j}$ and $f,g\in D$ with $f(x)\neq 0 \neq g(x)$ such that $nf=mg$. If we let $m'=g(x)/f(x) m  $, then 
    \begin{align}\label{eq:property of m'}
        m'\frac{g}{g(x)}
        =
        \left(
        \frac{g(x)}{f(x)} m
        \right)\frac{g}{g(x)}
        =
        m\frac{g}{f(x)}
        =
        n\frac{f}{f(x)}.
    \end{align}
    Since $N_{j}$ is closed under scalar multiplication, we have $m'\in N_{j}$,
    so that Equation~\eqref{eq:property of m'} implies $\llbracket n, x\rrbracket=\llbracket m', x\rrbracket \in \iota(\Sigma_{j})$
    which proves  $\pi\inv (\iota_{{j}}^{\cG}(\cG_{j})) \subseteq \iota_{{j}}^{\Sigma}(\Sigma_{j})$. The other inclusion is obvious.
\end{proof}

Since $\iota^{\cG}$ and  $\iota^{\Sigma}$ are embeddings of topological groupoids, we will from now on identify  $\cG_{j}$ and $\Sigma_{j}$ with their images in $\cG$ and $\Sigma$, respectively.
To see that $\cG$ is indeed the \ZS\ product of its subgroupoids $\cG_{1}$ and $\cG_{2}$, we first check that their intersection is trivial:

\begin{lemma}\label{lem:intersection}
    In the setting of Theorem~\ref{thm:from C*-blend}, the intersection of the subgroupoids $\cG_{1}$ and $\cG_{2}$ of $\cG$ is the unit space $\cU$. 
\end{lemma}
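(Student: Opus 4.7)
The plan is to unpack the equivalence relation defining $\cG$ and exploit the identity $D=A_1\cap A_2$ directly. I will take an arbitrary element of $\cG_1\cap\cG_2$, translate the hypothesis ``lying in both subgroupoids'' into an algebraic relation between normalizers, and show that this relation forces the representative to be realized by an element of $D$, which in turn makes the element a unit.

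Concretely, suppose $[n,x]\in\cG_1\cap\cG_2$ under the identifications from Lemma~\ref{lem:open subgroupoids}; this means we can choose $n\in N_1$ and $m\in N_2$ with $[n,x]=[m,x]$ in $\cG$. By the definition of the equivalence relation $\sim$ on $E$, there exist $d,d'\in D$ with $d(x),d'(x)\neq 0$ and $nd=md'$. Now $nd\in A_1$ and $md'\in A_2$, so the common value $f\coloneq nd=md'$ lies in $A_1\cap A_2=D$. Since $d(x)\neq 0$, the definition of $\sim$ yields $[n,x]=[nd,x]=[f,x]$.

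To conclude that $[f,x]$ is a unit, it remains to verify that $f(x)\neq 0$. For this I compute $f^*f=(nd)^*(nd)=d^*n^*nd$, so that $f^*f(x)=|d(x)|^2\,n^*n(x)$, which is strictly positive because $x\in\dom(n)$ forces $n^*n(x)>0$ and $d(x)\neq 0$ by assumption. Since $f\in D$, this gives $|f(x)|^2=f^*f(x)>0$, hence $f(x)\neq 0$. Therefore $[f,x]$ corresponds to a unit under the homeomorphism $x\mapsto[f_x,x]$ identifying $\cU$ with the unit space of $\cG$, and so $[n,x]\in\cU$.

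I do not anticipate a substantive obstacle here; the proof is essentially a direct unpacking of the Weyl-groupoid equivalence relation, with the crucial ingredient being the \emph{equality} $D=A_1\cap A_2$ (as opposed to mere containment), which is precisely what forces the product $nd=md'$ to land inside $D$. The only step requiring a brief computation is the final verification that $f(x)\neq 0$, which uses both the normalizer condition $x\in\dom(n)$ and $d(x)\neq 0$.
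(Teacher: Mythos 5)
Your proof is correct and follows essentially the same route as the paper's: unwind the equivalence relation to get $nd = md'$ with $d(x),d'(x)\neq 0$, observe that this common element lies in $A_1\cap A_2 = D$, and check via $(nd)^*(nd) = |d|^2\,n^*n$ that it does not vanish at $x$, which is exactly the criterion for $[n,x]$ to be a unit. The only cosmetic difference is that you pass through the intermediate identity $[n,x]=[nd,x]=[f,x]$ before invoking the unit criterion, whereas the paper applies the criterion to $n$ and $d$ directly; this changes nothing of substance.
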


\begin{proof}
    Assume  that $\gamma\in \cG_{1}\cap \cG_{2}$, so we may write $\gamma=[n_{1},x]=[n_{2},x]$ with $n_{j}\in N_{j}$ and $x\in \dom(n_{1})\cap\dom(n_{2})$.  The equality says that there exists $f_{j}\in D$ with $f_{j}(x)\neq 0$ and $n_{1}f_{1}=n_{2}f_{2}$. Note that the left-hand side is an element of $A_{1}$ and the right-hand side is an element of $A_{2}$, so $n_{1}f_{1} \in A_{1}\cap A_{2}=D$, and since $x\in \dom(n_{1})$, this element of $D$ satisfies $|(n_{1}f_{1})(x)|^2=(n_{1}^*n_{1})(x)|f_{1}(x)|^2\neq 0$. To sum up, we have found an element $f_{1}\in D$ with $f_{1}(x)\neq 0$ and such that $n_{1}f_{1}\in D$ does not vanish at $x$, which means exactly that $\gamma=[n_{1},x]\in\cU$.
\end{proof}

\begin{proof}[Proof of Theorem~\ref{thm:from C*-blend}]
  Since the inclusion maps are open by Lemma~\ref{lem:open subgroupoids}, the subgroupoids $\cG_{j}$ of $\cG$ are open.  
    Since   $\pi\inv(\cG_{j}) = \Sigma_{j}$ by Lemma~\ref{lem:open subgroupoids}, we conclude that $\Sigma_{j}$ is likewise open.
   Once we have shown that $(\Sigma_{1},\Sigma_{2})$ is an internal \ZS\ structure for $\Sigma$, then the remaining claims of Theorem~\ref{thm:from C*-blend} follow from an application of Theorem~\ref{thm:IZS=MT}\ref{it:thm:from IZS to MT}.   
   
    Since $D$ is Cartan in $A_{j}$, we know that $N_{j}$ densely spans $A_{j}$. Combining this with Assumption~\ref{it:C*B:dense range} and a (finicky but easy) $\epsilon/k$-argument, we conclude that the set
    \[
        N\coloneq  \{ n_{1}n_{2} : m \in N_{j}\}
    \]
    densely spans $A$.  
    Since $N_{j}\subseteq N_{A}(D)$ and since $N_{A}(D)$ is closed under multiplication, we conclude that $N\subseteq N_{A}(D)$. Since $N_{j}$ is closed under scalar multiplication, it now follows from Lemma~\ref{lem:DGNRW-Prop4.1} that any element of $\Sigma$ can be represented as $\llbracket n_{1}n_{2},x \rrbracket$ for some $n_{j} \in N_{j}$.
    Fix one such element.

    Since $n_{1}$ and $n_{2}$ are both normalizers of $D$, the fact that $\llbracket n_{1}n_{2},x\rrbracket\in \Sigma$ in particular means that 
    \begin{equation}\label{eq:dom n_{1} n_{2}}
        x\in \dom(n_{1}n_{2})
        =
        \{ y \in \dom (n_{2}) : \alpha_{n_{2}}(y)\in \dom(n_{1})\},
    \end{equation}
    so we can write
    \[
        \llbracket n_{1}n_{2},x \rrbracket
        =
        \llbracket n_{1},\alpha_{n_{2}}(x)\rrbracket 
        \llbracket n_{2},x\rrbracket .
    \]
    This shows that we can decompose the arbitrary element $\llbracket n_{1}n_{2},x\rrbracket$ of $\Sigma$ into the product of elements $\llbracket n_{1},\alpha_{n_{2}}(x)\rrbracket \in \Sigma_{1}$ and $
        \llbracket n_{2},x\rrbracket \in \Sigma_{2}$, i.e., Condition~\ref{it:IZS:product} holds.

    Since $(\Sigma,\jmath,\pi)$ is a short exact sequence, Condition~\ref{it:IZS:cap} is equivalent to showing that $\pi(\Sigma_{1})\cap\pi(\Sigma_{2})\subseteq \cU$, which we have done in Lemma~\ref{lem:intersection}. 
    It remains to show that $\Sigma_i$ is closed in $\Sigma$; we will do so for $i=1$, as the other proof is analogous. In a nutshell, our proof uses Condition~\ref{it:IZS:cap} and the fact that $\jmath(\mathbb{T}\times\cU)=\pi\inv(\cU)$ is closed in $\Sigma$.
    
    Assume $\{\llbracket n_\lambda, x_\lambda\rrbracket\}_\lambda$ is a net of elements in $\Sigma_{1}$ that converges to an element $\llbracket n_{1}n_{2},x\rrbracket$ in $\Sigma$, where $n_i\in N_i$. Then for any fixed neighborhoods $U\subset \mathbb{T}$ of $1$ and $W\subset \dom(n_{1}n_{2})$ of $x$, we must have for large $\lambda$ that
    \begin{equation}
        \label{eq:net in nbhd}
    \llbracket n_\lambda, x_\lambda\rrbracket
    \in 
    \{\llbracket zn_{1}n_{2}, y\rrbracket : z \in U, y\in W \} 
    .
    \end{equation}
    In particular, $x_\lambda \to x \in \dom(n_{1}n_{2})$, so without loss of generality we have $x_\lambda\in W\subset \dom(n_{1}n_{2})$ for all $\lambda$; in particular by~\eqref{eq:dom n_{1} n_{2}},
    \begin{equation}\label{eq:x_lambda,x}
        x_\lambda,x\in\dom(n_{2}) \cap \alpha_{n_{2}}\inv(\dom(n_{1}))  .      
    \end{equation}

    By~\eqref{eq:net in nbhd},    
    there exist $f_\lambda,g_\lambda\in D$ with $f_\lambda(x_\lambda)>0, g_\lambda(x_\lambda)\in U$ and $n_\lambda f_\lambda = n_{1}n_{2} g_\lambda$.  
    Choose an open set $V$ around $\alpha_{n_{2}}(x)$ whose closure is contained entirely in $\dom(n_{2}^*)$; in particular, $\alpha_{n_{2}}\inv(V)$ is a neighborhood around $x$, so without loss of generality, the entire net $\{x_\lambda\}_\lambda$ lies in $\alpha_{n_{2}}\inv(V)$. Let $h\in D$ be a $[0,1]$-valued function such that $h|_{V}\equiv 1$ and $\suppo(h)\subseteq \dom(n_{2}^*)$. By choice of $f_\lambda$ and $g_\lambda$, we have
    \begin{equation}\label{eq:finicky}
        hn_{1}^*(n_\lambda f_\lambda) = hn_{1}^*(n_{1}n_{2} g_\lambda)= (hn_{1}^*n_{1})n_{2} g_\lambda.
    \end{equation}
    Since $hn_{1}^*n_{1} \in D$ has support in $V\subseteq\dom(n_{2}^*)$, it follows from \cite[Lemma 4.2]{DGNRW:Cartan} that  $k
    \coloneq (hn_{1}^*n_{1})\circ \alpha_{n_{2}}$ is a well-defined element of $D$ and that $(hn_{1}^*n_{1})n_{2} = n_{2} k
    $. Since $x_\lambda\in \alpha_{n_{2}}\inv(V \cap \dom(n_{1}))$, we have 
    \begin{equation}\label{eq:h and x_lambda}
    h(\alpha_{n_{2}}(x_\lambda)) = 1
    \end{equation}
    and $(n_{1}^*n_{1})(\alpha_{n_{2}}(x_\lambda)) > 1$, so that $k
    (x_\lambda)>0$.
    As $n_\lambda\in N_{1}$, the element $m_\lambda\coloneq hn_{1}^*n_\lambda$ is in $N_{1}$, and we have
    \[
        \dom (m_\lambda)
        =
        \{
            y\in \dom(n_\lambda):
            \alpha_{n_\lambda}(y)\in \dom (n_{1}^*)
            \cap \alpha_{n_{1}}(\suppo(h))
        \}.
    \]
    Note that $
    x_\lambda\in \dom(n_\lambda)$ satisfies by Equations~\eqref{eq:x_lambda,x} and~\eqref{eq:h and x_lambda}
    \[
        \alpha_{n_{1}n_{2}}(x_\lambda)
        =
        \alpha_{n_{1}}(\alpha_{n_{2}}(x_\lambda))
        \in 
        \dom (n_{1}^*)
            \cap \alpha_{n_{1}}(\suppo(h)).
    \]
    Furthermore, it follows from $n_\lambda f_\lambda = n_{1}n_{2}g_\lambda$ and $f_\lambda(x_\lambda)\neq 0 \neq g_\lambda(x_\lambda)$ that $\alpha_{n_\lambda}(x_\lambda)=\alpha_{n_{1} n_{2}}(x_\lambda)$. We thus
    conclude that $x_\lambda\in\dom(m_\lambda)$.
    By Equation~\eqref{eq:finicky} and choice of $k$, 
    \begin{align*}
        m_\lambda f_\lambda = hn_{1}^*n_\lambda f_\lambda = (hn_{1}^*n_{1})n_{2}g_\lambda = n_{2} k g_\lambda.
    \end{align*}
    Since $f_\lambda(x_\lambda)>0$ and $k(x_\lambda)>0$ and since $g_\lambda(x_\lambda)\in U$, this shows that
    \[
    \xi_\lambda\coloneq
    \llbracket m_\lambda,x_\lambda\rrbracket
    =
    \llbracket 
    g_\lambda(x_\lambda)n_{2},x_\lambda\rrbracket
    \in
    \{\llbracket zn_{2}, y\rrbracket : z \in U, y\in W \} 
    .
    \]
    Since $U$ and $W$ were arbitrary neighborhoods around $1$ and $x$, respectively,
    we have shown that the net $\{\xi_\lambda\}_\lambda$  converges to $\llbracket n_{2},x\rrbracket$, an element of $\Sigma_{2}$. Since we have shown that $\Sigma_{2}$ is open
    (see the first paragraph of this proof),
    we must eventually have that $\xi_\lambda\in\Sigma_{2}$. Since $\xi_\lambda$ also lies in $\Sigma_{1}$ by construction, it follows from \ref{it:IZS:cap} that $\xi_\lambda\in \jmath(\mathbb{T}\times\cU)$. Since $\jmath(\mathbb{T}\times\cU)=\pi\inv(\cU)$ is closed in $\Sigma$, we conclude that $\llbracket n_{2},x\rrbracket=\lim_\lambda \xi_\lambda $ lies in $\jmath(\mathbb{T}\times\cU)$. This means that $\alpha_{n_{2}}(x)=x$ and
    \[
    \lim_\lambda \llbracket n_\lambda,x_\lambda\rrbracket
    = \llbracket n_{1}n_{2},x\rrbracket
        =
        \llbracket n_{1},\alpha_{n_{2}}(x)\rrbracket 
        \llbracket n_{2},x\rrbracket=
    \llbracket n_{1},x\rrbracket\in \Sigma_{1},
    \]
    as claimed.

    Thus, $(\Sigma_{1},\Sigma_{2})$ is an internal \ZS\ structure for $\Sigma$. It now follows from \ref{it:IZS:ZS} that $\cG=\mathmbox{\cG_{1}\bowtie\cG_{2}}$ and from Theorem~\ref{thm:IZS=MT}\ref{it:thm:from IZS to MT} that the map $\Sigma_{1}\ZST \Sigma_{2}\to \Sigma$ given by $ e\astonetwo f \mapsto ef$ is an isomorphism of twists
    for a unique \wordforPhi\ $\Phi$.
    It remains to check that $\Phi$ is as claimed. 
    
    By Theorem~\ref{thm:IZS=MT}\ref{it:thm:from IZS to MT}, we have
    \(
    \Phi(f\asttwoone e) = e' \astonetwo f' \)
    whenever $fe=e'f'$ in $\Sigma$ for some $e'\in \Sigma_{1}$ and $f'\in \Sigma_{2}$.
    Now, let us write the elements explicitly, say $f=\llbracket m_{2},\alpha_{m_{1}}(x)
        \rrbracket
        $, $e=
        \llbracket m_{1},x
        \rrbracket$,  $
        e'=\llbracket n_{1},\alpha_{n_{2}}(x)
        \rrbracket
        $, and $f'=
        \llbracket n_{2},x
        \rrbracket$ for  some $m_j,n_j\in N_j$. Then the equality $fe=e'f'$ means that there exists $g,g'\in D$ such that $g(x),g'(x)>0$ and $m_{2}m_{1}g=n_{1}n_{2}g'$. By construction, $n_{2}'\coloneq n_{2}g'\in N_{2}$ satisfies  $f'=\llbracket n_{2}',x
        \rrbracket$, and so $\Phi$ is as claimed. 
\end{proof}

\begin{corollary}
    Suppose that $(A_{1}; A_{2}; A)$ is an \adjforBlend\ C*-blend and assume
    that $D\coloneq  A_{1}\cap A_{2}$ is a Cartan subalgebra of $A$. Then $D$ is Cartan in both $A_{1}$ and $A_{2}$ if and only if there exist conditional expectations $A\to A_{1}$ and $A\to A_{2}$.
\end{corollary}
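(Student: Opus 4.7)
For the forward direction ($\Leftarrow$), suppose $D$ is Cartan in both $A_1$ and $A_2$. Then Theorem~\ref{thm:from C*-blend} identifies the Weyl twist $\Sigma\to\cG$ of $(A,D)$ with the external \ZS\ twist $\Sigma_1\ZST\Sigma_2\to\cG_1\bowtie\cG_2$, where $\Sigma_i\to\cG_i$ is the Weyl pair of $(A_i,D)$. Since $(A,D)$ is Cartan, $\cG$ is effective and \etale; combining Corollary~\ref{cor:ZS and effectiveness} with Lemma~\ref{lem:clopen in ZS-product if r-discrete} yields that each $\cG_i$ is \etale\ and clopen in $\cG$, so $\Sigma_i$ is clopen in $\Sigma$. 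Restriction of functions from $\cG$ to $\cG_i$ then extends (as in the proof of Lemma~\ref{lem:MT gives incl} and \cite[Lemma~2.7]{BFPR:GammaCartan}) to a contractive faithful conditional expectation $C^*_\red(\cG;\Sigma)\to C^*_\red(\cG_i;\Sigma_i)$, which transports through the Kumjian--Renault isomorphisms to the required conditional expectation $A\to A_i$.

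For the reverse direction ($\Rightarrow$), suppose conditional expectations $E_i\colon A\to A_i$ exist for $i=1,2$, and fix $i$. Two of the three Cartan axioms are easy: if $a\in A_i$ commutes with every $d\in D$, then $a$ commutes with $D$ inside $A$ and hence lies in $D$ by maximal abelianness of $D$ in $A$; thus $D$ is maximal abelian in $A_i$. Moreover, the unique faithful conditional expectation $F\colon A\to D$ (uniqueness following from $(A,D)$ being Cartan) restricts to a faithful conditional expectation $A_i\to D$.

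The main obstacle is regularity, namely the claim that $N_{A_i}(D)=N_A(D)\cap A_i$ densely spans $A_i$. My plan is to realize $A_i$ as the reduced twisted C*-algebra of an open subgroupoid of the Weyl groupoid $\cG$ of $(A,D)$. Define
\[
\cG_i\coloneq \bigcup\bigl\{[n,\dom(n)] : n\in N_A(D)\cap A_i\bigr\}\subseteq\cG,
\]
which is an open subgroupoid containing $\cU$, since $D\subseteq A_i$ and $N_A(D)\cap A_i$ is closed under products and adjoints. A verbatim adaptation of Lemma~\ref{lem:open subgroupoids} identifies $B_i\coloneq \overline{\lspan}\bigl(N_A(D)\cap A_i\bigr)\subseteq A_i$ with $C^*_\red(\cG_i;\Sigma|_{\cG_i})\subseteq A$. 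The crux is then the equality $B_i=A_i$. The density of $N_A(D)$-spans in $A$ allows one to write each $a\in A_i$ as the norm limit of finite sums $\sum_k c_k n_k$ with $n_k\in N_A(D)$; applying $E_i$ and using $E_i(a)=a$ yields $a=\lim\sum_k c_k E_i(n_k)$, so everything reduces to the containment $E_i(N_A(D))\subseteq N_{A_i}(D)$. This hard step can be verified using uniqueness of the conditional expectation $A\to D$ (which forces $F\circ E_i=F$) together with the $A_i$-bimodule property of $E_i$, or more conceptually by invoking the Brown--Fuller--Pitts--Reznikoff \cite{BFPR:GammaCartan} analysis of conditional-expectation-admitting intermediate subalgebras of Cartan embeddings. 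Once $B_i=A_i$ is secured, the identification $A_i\cong C^*_\red(\cG_i;\Sigma|_{\cG_i})$ exhibits $D$ as Cartan in $A_i$.
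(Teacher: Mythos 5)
Your direction ``$D$ Cartan in both $A_i$ $\Longrightarrow$ conditional expectations exist'' is essentially the paper's argument: invoke Theorem~\ref{thm:from C*-blend} to realize $\Sigma_i\to\cG_i$ as the restriction of $\Sigma\to\cG$ to a clopen subgroupoid (clopenness coming from Lemma~\ref{lem:clopen in ZS-product if r-discrete} and \'etaleness of $\cG$), and then use the standard conditional expectation onto the reduced twisted C*-algebra of a clopen subgroupoid. The paper cites \cite[Lemma 3.4]{BEFPR:2021:Intermediate} for that last step where you cite \cite[Lemma 2.7]{BFPR:GammaCartan}; either reference does the job. (You have swapped the labels $\Leftarrow$/$\Rightarrow$, but the content is clear.)

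The other direction is where there is a genuine gap. The paper disposes of it in one line by citing \cite[Theorem 3.5]{BEFPR:2021:Intermediate}, which states precisely that an intermediate subalgebra $D\subseteq B\subseteq A$ of a Cartan pair admitting a conditional expectation $A\to B$ is itself a Cartan pair over $D$. You instead attempt a direct proof and correctly isolate the crux as the containment $E_i(N_A(D))\subseteq N_{A_i}(D)$ (your reduction of regularity of $(A_i,D)$ to this containment via $a=E_i(a)=\lim\sum_k c_k E_i(n_k)$ is fine, as are the masa and faithful-expectation axioms). But the justification you offer for that containment --- uniqueness of the expectation $A\to D$ giving $F\circ E_i=F$, plus the $A_i$-bimodule property of $E_i$ --- does not yield it: from these two facts one gets, for $n\in N_A(D)$ and $d\in D$, only that $E_i(n)\,d\,E_i(n)^*=E_i\bigl(n\,dE_i(n)^*\bigr)$ lies in $A_i$, with no mechanism forcing it into $D$. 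This containment is exactly the hard content of the cited intermediate-subalgebra theorem (whose proof goes through the unique-pseudo-expectation/unique-state-extension machinery, not through the bimodule identity), so as written your ``elementary'' route is an assertion rather than a proof. Your fallback --- invoking the Brown--Fuller--Pitts--Reznikoff analysis --- is legitimate and is in effect what the paper does, but then the intermediate scaffolding (the subgroupoid $\cG_i$ and the algebra $B_i$) becomes unnecessary: once you cite that theorem you are done.
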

\begin{proof}
    If there exist conditional expectations $A\to A_j$, then $D$ is Cartan in $A_{j}$ by \cite[Theorem 3.5]{BEFPR:2021:Intermediate}.

    Conversely, suppose $D$ is Cartan in $A_{1}$ and $A_{2}$. By Theorem~\ref{thm:from C*-blend}, the associated Weyl groupoids $\cG_{1},\cG_{2}$ are subgroupoids of the Weyl groupoid $\cG$ of the Cartan pair $(A,D)$, and the Weyl twists $\Sigma_{1},\Sigma_{2}$ are the restrictions $\Sigma|_{\cG_{1}}, \Sigma|_{\cG_{2}}$ of the Weyl twist $\Sigma$ of $(A,D)$. These subgroupoids are clopen by \etale ness of $\cG$; see Lemma~\ref{lem:clopen in ZS-product if r-discrete}. By  \cite[Lemma 3.4]{BEFPR:2021:Intermediate}, there therefore exist conditional expectations $C^*_{r}(\cG;\Sigma) \to C^*_{r}(\cG_{j};\Sigma_{j})$ which, by \cite[Theorem 5.9]{Renault:Cartan}, are conditional expectations $A\to A_{j}$.
\end{proof}

\begin{remark}
    Given a discrete (abelian) group $\Gamma$, we suspect that a $\Gamma$-graded version of Theorem~\ref{thm:from C*-blend} can be proved, where Cartan subalgebras are replaced with $\Gamma$-Cartans in the sense of \cite[Definition 3.9]{BFPR:GammaCartan}, and groupoids with $\Gamma$-graded groupoids. (See \cite[Prop.\ 5.3., Thm.\ 4.19, Thm.\ 6.2]{BFPR:GammaCartan} for the correspondence between $\Gamma$-graded twists and $\Gamma$-Cartan pairs of C*-algebras.)
    
    On the groupoid level, it is easy to check that a pair of $\Gamma$-gradings  $\vartheta_{i}\colon \cG_{i}\to\Gamma$ gives rise to a (necessarily unique) $\Gamma$-grading of $\cG_{1}\bowtie\cG_{2}$ that makes  the diagram
    \begin{equation}\label{diag:Gamma-graded bowtie}
    \begin{tikzcd}[column sep = large, row sep = large
    ]
        \cG_{1}
        \ar[r, hook]
        \ar[rd, "\vartheta_{1}"']
        &
        \cG_{1}\bowtie\cG_{2}
        \ar[d, "\vartheta", dashed]
        &
        \cG_{2}
        \ar[l, hook']
        \ar[ld, "\vartheta_{2}"]
        \\
        & \Gamma &
    \end{tikzcd} \end{equation}
    commute,
if and only if the pair of gradings satisfies
        \begin{equation}\label{eq:varthetas}
            \vartheta_{2}(g)\vartheta_{1}(y)
            =
            \vartheta_{1}(g\HleftG y)\vartheta_{2}(g\HrightG y)
        \end{equation}
    in $\Gamma$ for all $(g,y)\in \cG_{2}\bfpsr\cG_{1}$. On the level of a `$\Gamma$-graded C*-blend', the grading $\vartheta$ is known to correspond to a $\Gamma$-grading $\{A^{\gamma}\}_{\gamma\in \Gamma}$ of $A$, but we are unsure what Equation~\eqref{eq:varthetas} translates to in terms of the subalgebras $A_{1},A_{2}$ of $A$.
    Since the current setting of our theorem is rather technical as-is, we refrained from investigating this idea further.
\end{remark}

\appendix

\section{The \ZS\ product of a Fell line bundle and a groupoid}\label{appendix:FellBdl}

We still owe the reader a proof of the claims in
Examples~\ref{ex:MT if one is trivial} and~\ref{ex:MT if one is trivial,pt2}. Recall that we are in the following setting in both examples: $(\cG_{1},\cG_{2})$ is a pair of matched groupoids with unit space $\cU$, $\Sigma_{2}=\mathbb{T}\times \cG_{2}$ is the trivial twist, and $\Sigma_{1}$ is a twist over $\cG_{1}$. We let $L_{1}=\mathbb{C}\times_{\mathbb{T}}\Sigma_{1}$ be the associated line bundle.

\subsection{Regarding Example~\ref{ex:MT if one is trivial}} We claim that a \wordforPhi\ $\Phi\colon \Sigma_{2}\ast_{\mathbb{T}}\Sigma_{1}\to \Sigma_{1}\ast_{\mathbb{T}}\Sigma_{2}$
{that covers $\cG_{1}\bowtie\cG_{2}$} contains the same data as a $(\cG_{1},\cG_{2})$-compatible $\cG_{2}$-action on $L_{1}$ 
\[
    \mvisiblespace \HleftB \mvisiblespace \colon \cG_{2}\bfp{s}{\rho} L_{1} \to L_{1},
    \quad
    (g,[\lambda,e]) \mapsto g\HleftB [\lambda,e].
\]
So assume first that we are given a
\wordforPhi\
$\Phi\colon \Sigma_{2}\ast_{\mathbb{T}}\Sigma_{1}\to \Sigma_{1}\ast_{\mathbb{T}}\Sigma_{2}$.
Given an arbitrary $(g,[\lambda,e])\in \cG_{2}\bfp{s}{\rho} {L_{1}}$, the element $\Phi((1,g)\asttwoone e)$ of $\Sigma_{1}\ast_{\mathbb{T}} \Sigma_{2}$ can be written uniquely as $f\astonetwo (1,h)$ for some $f\in\Sigma_{1}$ and $h\in\cG_{2}$ thanks to the $\mathbb{T}$-balancing. 
We may thus let
\begin{equation}
\label{eq:def HleftB}
g\HleftB [\lambda,e] \coloneq  [\lambda ,f]
\quad\text{ where $f$ is such that } f\astonetwo (1,h) = \Phi((1,g)\asttwoone e)
.
\end{equation}
It follows from $\mathbb{T}$-equivariance of $\Phi$ that $\HleftB $ is well-defined: given another representative $(\overline{z}\lambda,z\cdot e)$ of $[\lambda,e]$ in ${L_{1}}$, we have
\[
    \Phi((1,g)\asttwoone (z\cdot e))
    \overset{\ref{it:MT:T equivariant}}{=}
    z\cdot (f\astonetwo (1,h)) =
    (z\cdot f)\astonetwo (1,h),
\]
so that the equality $[\lambda ,f] = [\overline{z}\lambda ,z\cdot f]$ in ${L_{1}}$ proves that indeed
\[
g \HleftB [\overline{z}\lambda,z\cdot e] =g \HleftB [\lambda,e].
\]
Continuity of $\Phi$ implies continuity of $\HleftB $.

\begin{claim}\label{claim:HleftB}
    $\HleftB$ satisfies Conditions~\textup{(A1)--(A5)} of \cite[Definition 3.1]{DuLi:ZS}, so that it is a $(\cG_{1},\cG_{2})$-compatible $\cG_{2}$-action on the Fell bundle ${L_{1}}$.
\end{claim}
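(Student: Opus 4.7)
The plan is to check each axiom (A1)--(A5) of \cite[Definition 3.1]{DuLi:ZS} in turn, extracting it from the corresponding clause in the definition of a \wordforPhi. Well-definedness of $\HleftB$ has already been verified via \ref{it:MT:T equivariant}, and continuity is inherited from that of $\Phi$ and the quotient maps; only the algebraic identities remain. Throughout, for $(g,[\lambda,e])\in \cG_{2}\bfp{s}{\rho}L_{1}$, I let $f\in\Sigma_{1}$ and $h\in\cG_{2}$ be the unique elements (after the $\mathbb{T}$-balancing in $\Sigma_{1}\ast_{\mathbb{T}}\Sigma_{2}$) with $\Phi((1,g)\asttwoone e)=f\astonetwo(1,h)$, so that $g\HleftB[\lambda,e]=[\lambda,f]$. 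Note that since $\Phi$ covers $\cG_{1}\bowtie\cG_{2}$, Equation~\eqref{eq:Pi and Phi} forces $\pi_{1}(f)=g\HleftG\pi_{1}(e)$ and $h=g\HrightG\pi_{1}(e)$.

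First, the anchor/momentum compatibility (A1) — i.e.\ that $q_{1}(g\HleftB[\lambda,e])=g\HleftG\pi_{1}(e)$, $r_{L_{1}}(g\HleftB[\lambda,e])=r_{\cG_{2}}(g)$, and $s_{\cG_{1}}(g\HleftG\pi_{1}(e))=r_{\cG_{2}}(g\HrightG\pi_{1}(e))$ — reduces to the fact noted above together with \ref{it:MT:source and range} and property \ref{item:ZS7} of the matched pair. The unit axiom (A2), $r(\pi_{1}(e))\HleftB[\lambda,e]=[\lambda,e]$, comes directly from \ref{it:MT:diagram,top}: for $u=r(\pi_{1}(e))\in\cU$ we have $\Phi((1,u)\asttwoone e)=e\astonetwo(1,s(\pi_{1}(e)))$ after using $\jmath_{2}(1,u)=u$, which gives $f=e$. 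Linearity in the $\mathbb{C}$-coordinate (A4) is immediate from the formula $g\HleftB[\lambda,e]=[\lambda,f]$, since $\lambda$ passes through unchanged and the line-bundle addition on a fixed fibre coincides with that on $\mathbb{C}$.

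For (A3), the associativity $(g_{1}g_{2})\HleftB[\lambda,e]=g_{1}\HleftB(g_{2}\HleftB[\lambda,e])$, I will apply \ref{it:MT:associativity, left} to $\tilde{f}=(1,g_{1})$, $f=(1,g_{2})$, and $e$: writing $\Phi((1,g_{2})\asttwoone e)=f_{2}\astonetwo(1,g_{2}\HrightG\pi_{1}(e))$, the identity
\[
    \Phi((1,g_{1}g_{2})\asttwoone e)=\Phi((1,g_{1})\asttwoone f_{2})\bullet(1,g_{2}\HrightG\pi_{1}(e))
\]
combined with \ref{item:ZS9} produces exactly $g_{1}\HleftB(g_{2}\HleftB[\lambda,e])$ on the right-hand side after identifying line-bundle representatives. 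Similarly, for the multiplicativity condition (A5), $g\HleftB([\lambda,e][\lambda',e'])=(g\HleftB[\lambda,e])((g\HrightG\pi_{1}(e))\HleftB[\lambda',e'])$, I will apply \ref{it:MT:associativity, right} to $f=(1,g)$, $e$, and $\tilde{e}=e'$, together with \ref{item:ZS8}; the $\mathbb{T}$-scalar arising from the product $ee'$ in $\Sigma_{1}$ gets absorbed into the $\mathbb{C}$-coordinate of $L_{1}$ consistently on both sides.

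The main bookkeeping obstacle will be (A5): one must carefully track how the twist cocycle in $\Sigma_{1}\cdot\Sigma_{1}$ interacts with the balancing in $L_{1}\otimes L_{1}\cong L_{1}|_{\pi_{1}(e)\pi_{1}(e')}$ and how the splitting $\Phi(f\asttwoone(ee'))=f_{1}\bullet\Phi(f_{1}'\asttwoone e')$ distributes the $\mathbb{T}$-phases between the two factors on the right. The check comes down to verifying that Condition~\ref{it:MT:associativity, right} is exactly the right-hand equality expressed one level below (in the line bundle $L_1$), which it is by construction.
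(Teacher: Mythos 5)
Your proposal covers the fibre-map/anchor condition, linearity, the unit axiom, compatibility with multiplication in $\cG_{2}$, and multiplicativity over products in $L_{1}$ — and for each of these the condition of Definition~\ref{def:MT} you invoke is the same one the paper uses (Equation~\eqref{eq:Pi and Phi} together with unique decomposition in $\cG_{1}\bowtie\cG_{2}$ for the fibre statement, \ref{it:MT:diagram,top} for units, \ref{it:MT:associativity, left} for the $\cG_{2}$-associativity, \ref{it:MT:associativity, right} for the product rule). Those parts are fine, modulo a harmless renumbering relative to the paper's enumeration.

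However, there is a genuine gap: your list of five axioms omits the involution condition, which in the paper's enumeration is (A5), namely
\[
    \bigl(g\HrightG \pi_{1}(e)\bigr)\HleftB [\lambda,e]^{*} \;=\; \bigl(g\HleftB[\lambda,e]\bigr)^{*}.
\]
You have replaced it by a separate ``linearity in the $\mathbb{C}$-coordinate'' axiom, but linearity is folded into the fibre-map condition and is trivial (the action only carries $\lambda$ along), whereas $*$-compatibility is a substantive requirement for a $\cG_{2}$-action on a \emph{Fell} bundle and does not follow from the other four conditions you check. The paper proves it by combining \ref{it:MT:diagram,top} and \ref{it:MT:associativity, right}: applying $\Phi$ to $(1,g)\asttwoone ee\inv$ gives $r(g)\astonetwo(1,g)=f\bullet\Phi((1,h)\asttwoone e\inv)$, hence $\Phi((1,h)\asttwoone e\inv)=f\inv\astonetwo(1,g)$, which yields $h\HleftB[\overline{\lambda},e\inv]=[\overline{\lambda},f\inv]=[\lambda,f]^{*}$. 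Without this verification the claim that $\HleftB$ is a $(\cG_{1},\cG_{2})$-compatible action on the Fell bundle $L_{1}$ is not established.
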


\begin{proof}[Proof of the claim.]
Assume throughout that $\Phi((1,g)\asttwoone e)=f\astonetwo (1,h)$.
Roughly speaking, the reader can expect the dictionary to be as follows:
\begin{itemize}
    \item \ref{it:MT:T equivariant} makes $\HleftB$ well defined, as we have seen earlier;
    \item 
    {Equation~\eqref{eq:Pi and Phi}} yields (A1), i.e., $g\HleftB\mvisiblespace$ is a map from ${(L_{1})}_{x}$ to ${(L_{1})}_{g\HleftG x}$ (linearity comes for free);
    \item \ref{it:MT:associativity, left} yields (A2), i.e., $g'\HleftB (g\HleftB\mvisiblespace)=
    g'g\HleftB\mvisiblespace$;
    \item \ref{it:MT:diagram,top} yields (A3), i.e., units act trivially;
    \item \ref{it:MT:associativity, right} yields (A4), i.e., 
    $g$ acts as expected
    on a product in ${L_{1}}$;
    and lastly
    \item \ref{it:MT:diagram,top} and~\ref{it:MT:associativity, right} combined yield (A5), i.e.,
    the involution of $g\HleftB [\lambda,e]$ is as expected.   
\end{itemize}

(A1) Take $(g,e)\in \cG_{2}\bfpsr \Sigma_{1}$. Because of 
Equation~\eqref{eq:Pi and Phi}, we have for $f\astonetwo (1,h)=\mathmbox{\Phi ((1,g)\asttwoone e)}$ that
\[
    \pi_{1}(f) h\overset{{\eqref{eq:Pi and Phi}}}{=} g\pi_{1}(e)
    \overset{\eqref{eq:2 decompositions in ZS product}}{=}
    ( g\HleftG \pi_{1}(e))( g\HrightG \pi_{1}(e)).
\]
Since $\pi_{1}(f), g\HleftG \pi_{1}(e) \in \cG_{1}$ and $h, g\HrightG \pi_{1}(e)\in \cG_{2}$, the uniqueness of the decomposition of elements in $\cG_{1}\bowtie\cG_{2}$ implies that
\begin{equation}\label{eq:f and h}
    \pi_{1}(f) = g\HleftG \pi_{1}(e) 
    \quad\text{and}\quad
    h = g\HrightG \pi_{1}(e)
    \tag{$\clubsuit$}
\end{equation}
meaning that $g\HleftB [\lambda,e]$ is an element of ${(L_{1})}_{g\HleftG \pi_{1}(e)}$. In other words, $g\HleftB\mvisiblespace$ is indeed a map ${(L_{1})}_{\pi_{1}(e)}\to {(L_{1})}_{g\HleftG \pi_{1}(e)}$. Since $g\HleftB\mvisiblespace$ only multiplies the $\mathbb{C}$-component by a scalar, it is a linear map, proving Condition~(A1).

(A2) Suppose $(g',g)\in \cG_{2}^{(2)}$. Because of Condition~\ref{it:MT:associativity, left}, it follows from $\mathmbox{\Phi ((1,g)\asttwoone e)}=\mathmbox{f\astonetwo (1,h)}$ that
\[
    \Phi ((1,g'g)\asttwoone e)=
    \Phi((1,g')\asttwoone f) \bullet (1,h)
\]
Assume that $f'$ is such that $\Phi((1,g')\asttwoone f)=f'\astonetwo (1,h')$, so that
\[
    g' \HleftB (g\HleftB[\lambda,e])
    =
    g'\HleftB [\lambda, f]
    =
    [\lambda,f']
\]
and
\[
    \Phi ((1,g'g)\asttwoone e)
    =
    f'\astonetwo (1,h'h).
\]
The latter implies
\[
    (g' g)\HleftB[\lambda,e]
    =
    [\lambda,f']
    =
    g' \HleftB (g\HleftB[\lambda,e]),
\]
which proves that $g'\HleftB (g\HleftB\mvisiblespace)=(g'g)\HleftB\mvisiblespace$.

(A3) Assume $g=u\in\cU$. Then since $\Phi$ is compatible with the inclusion maps, we have
\[
    \Phi((1,u)\asttwoone u) \overset{\ref{it:MT:diagram,top}}{=} u\astonetwo (1,u),
\]
so that
\(
    u\HleftB [\lambda, u] = [\lambda,u],
\)
which proves that $u\HleftB\mvisiblespace$ is the identity map.

(A4) Suppose that $(e,e')\in\Sigma_{2}^{(2)}$. Because of Condition~\ref{it:MT:associativity, right}, it follows from $\Phi ((1,g)\asttwoone e)=f\astonetwo (1,h)$ that
\[
    \Phi ((1,g)\asttwoone ee')
    =
    f \cdot \Phi((1,h)\asttwoone e').
\]
Assume that $\Phi((1,h)\asttwoone e')=f'\astonetwo (1,h')$, so that
\[  
    (g\HrightG \pi_{1}(e))\HleftB [\lambda',e']
    \overset{\eqref{eq:f and h}}{=}
    h\HleftB [\lambda',e']
    =
    [\lambda', f']
\]
and
\[
    \Phi ((1,g)\asttwoone ee')
    =
    (ff')\astonetwo (1,h')
\]
The latter implies
\[
   g\HleftB [\lambda\lambda',ee']
    =
    [\lambda\lambda',ff']
    =
    [\lambda,f][\lambda',f'],
\]
so that we have shown
\[
    g\HleftB
   [\lambda\lambda',ee']
    = (g\HleftB [\lambda,e])
    \bigl(
    (g\HrightG \pi_{1}(e))\HleftB [\lambda',e']
    \bigr),
\]
as needed.

(A5) We have
\begin{align*}
    r(g)\astonetwo (1,g)
    &\overset{\ref{it:MT:diagram,top}}{=}
    \Phi\bigl( (1,g) \asttwoone ee\inv\bigr)
    \overset{\ref{it:MT:associativity, right}}{=}
    f\bullet \Phi((1,h)\asttwoone e\inv),
\end{align*} 
or in other words
\begin{align*}
    f\inv\astonetwo (1,g)
    &=
    \Phi((1,h)\asttwoone e\inv).
\end{align*}
This explains $(\dagger)$ in the following:
\begin{align*}
    (g \HrightG \pi_{1}(e))\HleftB [\lambda,e]^*
    &
    \overset{\eqref{eq:f and h}}{=}
    h\HleftB [\overline{\lambda},e\inv]
    \overset{(\dagger)}{=}
    [\overline{\lambda},f\inv]
    =
    [\lambda,f]^*
    =
    (g\HleftB [\lambda,e])^*.
\end{align*}
This concludes the proof of the claim.
\end{proof}

Conversely, suppose we are given a $(\cG_{1},\cG_{2})$-compatible $\cG_{2}$-action  on ${L_{1}}$,
and let $p\colon {L_{1}}^\times\to \Sigma_{1}$ be given by $p([\lambda,e])= \mathrm{Ph}(\lambda)\cdot e$. Note that, in particular, $p(\mu \xi) = \mathrm{Ph}(\mu)\cdot p(\xi) =  p\bigl( \mathrm{Ph}(\mu)\xi\bigr)$ for all $\xi\in {L_{1}}$ and all $\mu\in\mathbb{C}^\times$, so that linearity of $g\HleftB\mvisiblespace$ implies for $\xi=[\lambda,e]$ that
\begin{align}\label{eq:p twice}
    p( g\HleftB \xi)
    &=
    p\bigl( \lambda( g\HleftB [1, e])\bigr)
    =
    p\bigl( \mathrm{Ph}(\lambda) (g\HleftB [1, e])\bigr)
     =
    p\bigl( g\HleftB [1,p(\xi)]\bigr).
    \tag{$\spadesuit$} 
\end{align}
We will furthermore need that 
\begin{align}
    g\HleftG \pi_{1}(e)
    &=
    g\HleftG q_{1}([\lambda,e])
    \overset{\text{(A1)}}{=}
    q_{1}(g\HleftB [\lambda,e])
    =
    \pi_{1}\bigl( p(g\HleftB [\lambda,e])\bigr).
    \label{eq:p and pi}
    \tag{$\diamondsuit$}  
\end{align}
Define $\Phi_0\colon \Sigma_{2}\bfpsr \Sigma_{1}\to \Sigma_{1}\ast_{\mathbb{T}} \Sigma_{2}$ by
\[
    \Phi_0\bigl( (z,g), e\bigr)
    =
        p(g\HleftB [1,e]) \astonetwo
        (z, g\HrightG \pi_{1}(e))
    .
\]
Note that, since $s_{\cG_{2}}(g)=s_{\Sigma_{2}}(z,g)=r_{\Sigma_{1}}(e)= \rho([1,e])$, the element $g\HleftB [1,e]$ of ${L_{1}}$ is defined and lies in the fibre over $g\HleftG \pi_{1}(e)$. In particular,
\[
s_{\Sigma_{1}} (p(g\HleftB [1,e]))
=
s_{\cG_{1}} (g\HleftG \pi_{1}(e))
=
r_{\cG_{2}} (g\HrightG \pi_{1}(e))
=
r_{\Sigma_{2}} (z,g\HrightG \pi_{1}(e)),
\]
so that $\Phi_0$ indeed takes values in $\Sigma_{1}\ast_{\mathbb{T}} \Sigma_{2}$. Since $\HleftB$ is linear,  $(\dagger)$ in the following holds:
\begin{align*}
    w\cdot p(g\HleftB [1,e])
    &=
    p\bigl( w(g\HleftB [1,e])\bigr)\overset{(\dagger)}{=}
    p\bigl( g\HleftB [w,e]\bigr)
    =
    p\bigl( g\HleftB [1,w\cdot e]\bigr).
\end{align*}
This explains $(\ddagger)$ in
\begin{align*}
    \Phi_0\bigl( (wz,g), e\bigr)
    &=
    p(g\HleftB [1,e]) \astonetwo
    (wz, g\HrightG \pi_{1}(e))
    \\
    &\overset{(\ddagger)}{=}
    p\bigl( g\HleftB [1,w\cdot e]\bigr)\astonetwo
    (z, g\HrightG \pi_{1}(e))
    =
    \Phi_0\bigl( (z,g), w\cdot e\bigr).
\end{align*}
This proves that $\Phi_0$ is constant on $\mathbb{T}$-equivalence classes, i.e., it factors through a map 
\begin{align*}
\Phi\colon &&\Sigma_{2}\ast_{\mathbb{T}} \Sigma_{1}&\to \Sigma_{1}\ast_{\mathbb{T}} \Sigma_{2},
&
(z,g) \asttwoone e
&\mapsto
        p(g\HleftB [1,e]) \astonetwo
        (z, g\HrightG \pi_{1}(e)).
\end{align*}
Since the quotient map $\Sigma_{2}\bfpsr \Sigma_{1}\to \Sigma_{2}\ast_{\mathbb{T}} \Sigma_{1}$ is open, $\Phi$ is continuous. Moreover, its inverse is given by the unwieldy, but clearly continuous,
\begin{align*}
 f \astonetwo (w,h)
&\mapsto
\Bigl(w, 
h
\HrightG \bigl(h\inv \HleftG \pi_{1}(f\inv)\bigr)
\Bigr)
 \asttwoone
 p\Bigl(\bigl(h\inv \HrightG \pi_{1}(f\inv)\bigr)\HleftB [1, f] \Bigr)
\end{align*}

\begin{claim}\label{claim:Phi}
    $\Phi$ satisfies Conditions~\ref{it:MT:T equivariant}--\ref{it:MT:associativity, left}, so that $(\Sigma_{1},\Sigma_{2})$ is a matched pair of twists with \wordforPhi~$\Phi$.
\end{claim}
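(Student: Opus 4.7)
The plan is to verify Conditions~\ref{it:MT:T equivariant}--\ref{it:MT:associativity, left} one at a time by unwinding the definition of $\Phi$ and appealing to the corresponding axiom of the $(\cG_{1},\cG_{2})$-compatible action $\HleftB$; the dictionary essentially reverses the one spelled out in the proof of Claim~\ref{claim:HleftB}.

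I will handle \ref{it:MT:T equivariant} and \ref{it:MT:source and range} first, since they are essentially bookkeeping. Condition~\ref{it:MT:T equivariant} will follow by noting that the $\mathbb{T}$-action on $\Sigma_{2}=\mathbb{T}\times\cG_{2}$ only alters the scalar in the first coordinate, so the balancing in $\Sigma_{1}\ast_{\mathbb{T}}\Sigma_{2}$ allows the factor to be moved across to the $\Sigma_{1}$-component, exactly as required. Condition~\ref{it:MT:source and range} will then follow from identity~\eqref{eq:p and pi} (after composing with the range map of $\cG_{1}$) together with \ref{item:ZS5}.

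For Condition~\ref{it:MT:diagram,top}, the first identity reduces, after substituting $(z,g)=(1,r(e))$, to the equalities $r(e)\HleftB [1,e]=[1,e]$ (axiom~(A3)) and $r(e)\HrightG \pi_{1}(e)=s(e)$ (property~\ref{item:ZS11}). The second identity, $\Phi((z,g)\asttwoone s(g))=r(g)\astonetwo (z,g)$, will require the auxiliary normalization $g\HleftB [1,s(g)]=[1,r(g)]$. I expect this normalization to be the main obstacle: it is not one of the listed axioms and must be extracted from them. The argument is to write $g\HleftB [1,s(g)]=[\mu,r(g)]$ for some $\mu\in\mathbb{C}$ (possible by~(A1)), apply~(A4) to $e=e'=s(g)$ using \ref{item:ZS6}, and conclude that $\mu=\mu^{2}$; the alternative $\mu=0$ is ruled out by fibre-wise invertibility of $g\HleftB\mvisiblespace$, which follows from (A2) combined with (A3). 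Once this is in hand, \ref{item:ZS6} closes the argument.

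For Conditions~\ref{it:MT:associativity, right} and~\ref{it:MT:associativity, left}, the plan is to use the multiplicativity $p(\xi\eta)=p(\xi)p(\eta)$, valid whenever the product is defined in $L_{1}^{\times}$; this is immediate from the formula $p([\lambda,e])=\mathrm{Ph}(\lambda)\cdot e$ and centrality of the $\mathbb{T}$-action. For~\ref{it:MT:associativity, right}, I factor $[1,e\tilde{e}]=[1,e][1,\tilde{e}]$ in $L_{1}$, apply axiom~(A4) to split $g\HleftB [1,e\tilde{e}]$ as $(g\HleftB [1,e])\bigl((g\HrightG \pi_{1}(e))\HleftB [1,\tilde{e}]\bigr)$, and invoke~\ref{item:ZS4} on the $\Sigma_{2}$-coordinate; matching the result against $e'\bullet \Phi(f'\asttwoone\tilde{e})$ is then direct (after reducing to $z=1$ by~\ref{it:MT:T equivariant}). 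Condition~\ref{it:MT:associativity, left} will be proven symmetrically, using axiom~(A2) in place of (A4) together with~\ref{item:ZS1} and~\ref{item:ZS9}. Apart from the normalization point flagged above, the remaining verification is routine algebraic bookkeeping.
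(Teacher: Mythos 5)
Your proposal is correct and follows essentially the same route as the paper: each of (MT0)--(MT4) is checked against the corresponding axiom (A1)--(A5) of $\HleftB$ together with the relevant (ZS) identities, with linearity/multiplicativity of $p$ (i.e.\ $p(\xi\eta)=p(\xi)p(\eta)$ and $p(g\HleftB\xi)=p(g\HleftB[1,p(\xi)])$) handling the bookkeeping in (MT3) and (MT4). The one place where you are more careful than the paper is the normalization $g\HleftB[1,s(g)]=[1,r(g)]$ needed for the second half of (MT2): the paper dispatches this with ``(A1) and linearity,'' whereas your idempotency-plus-invertibility derivation via (A4), \ref{item:ZS6}, (A2), and (A3) is a complete and valid argument.
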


\begin{proof}[Proof of the claim.]
The dictionary is similar to that in the proof of Claim~\ref{claim:HleftB}.

   \ref{it:MT:T equivariant}         
         $\Phi$ is $\mathbb{T}$-equivariant because
\begin{align*}
    w\cdot \Phi((z,g) \asttwoone e)
    &=
    w\cdot \bigl(    p(g\HleftB [1,e]) \astonetwo
        (z, g\HrightG \pi_{1}(e))\bigr)
    \\&=
    p(g\HleftB [1,w\cdot e]) \astonetwo
        (z, g\HrightG \pi_{1}(e))
    =
    \Phi((z,g) \asttwoone (w\cdot  e)).
\end{align*}

\ref{it:MT:source and range} We compute
\begin{align*}
    r_{\Sigma_1}\bigl(p(g\HleftB [1,e])
    \bigr)
    &=
    (r_{\cG_1}\circ\pi_{1})\bigl(p(g\HleftB [1,e])
    \bigr)
    \overset{\eqref{eq:p and pi}}{=}
    r_{\cG_{1}} \bigl(g\HleftG \pi_{1}(e)\bigr)
    \overset{\ref{item:ZS2}}{=}
    r(g)=r(z,g)
\end{align*}
and
\begin{align*}
    s_{\Sigma_{2}}\bigl(z, g\HrightG \pi_{1}(e)\bigr)
    = s_{\cG_{2}}(g\HrightG \pi_{1}(e))
    \overset{\ref{item:ZS5}}{=}
    s_{\cG_{1}} (\pi_{1}(e))
    =
    s_{\Sigma_{1}}(e).
\end{align*}

  \ref{it:MT:diagram,top} 
  For $v\coloneq  s(g)=r(e)$, we have $g\HleftB [1,v] = [1,r(g)]$ by (A1) and by linearity, so that
    \begin{align*}
     \Phi\bigl( (z,g) \asttwoone v\bigr)  &=
     p(g\HleftB [1,v]) \astonetwo
        (z, g\HrightG \pi_{1}(v))
        \\&=
     p([1,r(g)]) \astonetwo
        (z, g\HrightG v)=
     r(g) \astonetwo
        (z, g),
    \end{align*}
    proving that $\iota^{2}_{1,2}=\Phi\circ \iota^{2}_{2,1}$. Likewise, $v\HrightG \pi_{1}(e)=s(e)$ by \ref{item:ZS11} and $v\HleftB [1,e]=[1,e]$ by (A3), so that 
    \begin{align*}
        \Phi((1,v) \asttwoone e)
        &=
        p(v\HleftB [1,e]) \astonetwo
        (z, v\HrightG \pi_{1}(e))
        \\&=
        p([1,e]) \astonetwo
        (z, s(e))
        =
        e\astonetwo
        (z, s(e)),
    \end{align*}
  proving that $\iota^{1}_{1,2}=\Phi\circ \iota^{1}_{2,1}$.

         \ref{it:MT:associativity, right}
         We have
         \begin{align*}
            \Phi \bigl( (z,g\HrightG \pi_{1}(e)) \asttwoone e'\bigr)
            &=
            p\bigl((g\HrightG \pi_{1}(e))\HleftB [1,e']\bigr) \astonetwo
            \bigl(z, (g\HrightG \pi_{1}(e))\HrightG \pi_{1}(e')\bigr).
         \end{align*}
         Since (A4) says that \[
            (g\HleftB [1,e])
            \bigl(
            (g\HrightG \pi_{1}(e))\HleftB [1,e']
            \bigr)
            =
            g\HleftB
           [1,ee'],
            \]
            we conclude        
         \begin{align*}
             p(g\HleftB [1,e])\bullet
            \Phi \bigl( (z,g\HrightG \pi_{1}(e)) \asttwoone e'\bigr)
            &=
            p\bigl(g\HleftB [1,ee']\bigr) \astonetwo
            \bigl(z, g\HrightG \pi_{1}(ee')\bigr)            
            \\
            &=
            \Phi \bigl( (z,g) \asttwoone ee'),
         \end{align*}
        as needed.

         \ref{it:MT:associativity, left}
         We have
         \begin{align*}
             \Phi\bigl(
             (w,g')(z,g)\asttwoone e
             \bigr)
             &=
             \Phi\bigl(
             (wz,g'g)\asttwoone e
             \bigr)
             \\
             &=
             p( (g'g) \HleftB [1,e])
             \astonetwo
             \bigl(wz,(g'g)\HrightG \pi_{1}(e)\bigr).
         \end{align*}
         On the other hand,       
         since $\pi_{1}(p(g\HleftB [1,e])) = g\HleftG \pi_{1}(e)$ by Equation~\eqref{eq:p and pi}, we get 
         \begin{align*}
             \Phi\bigl(
                (w,g')\asttwoone p(g\HleftB [1,e])
             \bigr)
             &
             =
             p\Bigl(
                g' \HleftB 
                \bigl[
                    1, p(g\HleftB[1,e])
                \bigr]
             \Bigr)
             \astonetwo
             \bigl(w, g' \HrightG (g\HleftG \pi_{1}(e))\bigr)
             \intertext{which by Equation~\eqref{eq:p twice} equals}
             &=
             p\Bigl(
                g' \HleftB (g\HleftB[1,e])
             \Bigr)
             \astonetwo
             \bigl(w, g' \HrightG (g\HleftG \pi_{1}(e))\bigr)
             .
         \end{align*}
         Thus, with \ref{item:ZS9},
         \begin{align*}
             &\Phi\bigl(
                (w,g')\asttwoone p(g\HleftB [1,e])
             \bigr)\bullet
             (z,g\HleftG\pi_{1}(e))
             \\
             &=
             p(
                (g'g) \HleftB [1,e]
             )
             \astonetwo
             \bigl(wz, (g'g) \HrightG \pi_{1}(e)\bigr)
            \\&=
             \Phi\bigl(
             (w,g')(z,g)\asttwoone e
             \bigr),
        \end{align*}
         as needed.
\end{proof}

This finishes the proof of the claims made in Example~\ref{ex:MT if one is trivial}. 

\subsection{Regarding Example~\ref{ex:MT if one is trivial,pt2}}
We next consider the Fell line bundle $L\coloneq \mathbb{C}\times_{\mathbb{T}} (\Sigma_{1}\ZST \Sigma_{2})$ associated to the \ZS\ twist $\Sigma_{1}\ZST \Sigma_{2}$, and the \ZS\ product   $L_{1}\bowtie \cG_{2} $ of the Fell bundle $L_{1}$ and the groupoid $\cG_{2}$. We claim that the map $
    \Omega
    $ given in Equation~\eqref{eq:Omega} exists and is the claimed isomorphism. 
    
    To construct $
    \Omega
    $, consider the map
\begin{align*}
    {\Omega_{0}}\colon 
    \mathbb{C}\times (\Sigma_{1}\bfpsr\Sigma_{2}) \to 
    L_{1}\bowtie \cG_{2} 
    ,
    \quad
    \bigl(\lambda, e , (z,g )\bigr)
    \mapsto 
    \bigl(
    [\lambda z,  e ], g 
    \bigr),
\end{align*}
clearly satisfies for $w\in\mathbb{T}$, 
\begin{align*}
    {\Omega_{0}}\bigl(\lambda, e , (wz,g )\bigr)   
    =
    {\Omega_{0}}\bigl(w\lambda, e , (z,g )\bigr)
    =
    {\Omega_{0}}\bigl(\lambda, w\cdot  e , (z,g )\bigr),
\end{align*}
so it factors through a map
\begin{align*}
    \Omega\colon 
    L=\mathbb{C}\times_{\mathbb{T}} (\Sigma_{1}\ZST \Sigma_{2}) \to 
    L_{1}\bowtie \cG_{2} 
    ,
    \quad
    \bigl[
        \lambda, e \astonetwo (z,g )
    \bigr]
    \mapsto 
    \bigl(
    [\lambda z,  e ], g 
    \bigr)
    .
\end{align*}
An application of \cite[Propositions A.7 and A.8]{DL:MJM2023} shows that $\Omega$ is an isomorphism of \usc\ Banach bundles. To see that $\Omega$ is multiplicative, let $\xi_{i}=[
        \lambda_{i}, e_{i} \astonetwo (z_{i},g_{i} )
    ]\in L $ be such that $s_{\cG_{2}}(g_{1})=r_{\Sigma_{1}}(e_{2})$, so that
\begin{align*}
    \xi_{1}\xi_{2}
    &=
    \bigl[
        \lambda_{1}\lambda_{2}, e_{1} \bullet\Phi((z_{1},g_{1} )\asttwoone e_{2})\bullet (z_{2},g_{2} )
    \bigr]
    .
\end{align*}
If $f\in\Sigma_{1}$ is such that $f\astonetwo (1,g_{1}\HrightG\pi_{1}(e_{2})) =
    \Phi((1,g_{1})\asttwoone e_{2})$, then
    the definition of $\HleftB$ at~\eqref{eq:def HleftB} gives
\begin{equation}\label{eq:our choice of f}
    [z_{1}\lambda_{1},  e_{1} ]
    \left(
        g_{1} \HleftB 
    [z_{2}\lambda_{2},  e_{2} ]
    \right)
    \overset{\eqref{eq:def HleftB}}{=}
    [z_{1}\lambda_{1},  e_{1} ]
    [z_{2}\lambda_{2},  f ]
    =[z_{1}z_{2}\lambda_{1}\lambda_{2}, e_{1}f].
\end{equation}
Furthermore,
\begin{align*}
    e_{1} \bullet\Phi((z_{1},g_{1} )\asttwoone e_{2})\bullet (z_{2},g_{2} )
    &=
    (e_{1}f)\astonetwo
    \bigl(
    (z_{1},g_{1}\HrightG\pi_{1}(e_{2}))
    (z_{2},g_{2} )
    \bigr)
    \\
    &=
    (e_{1}f)\astonetwo
    \bigl(z_{1}z_{2},
    (g_{1}\HrightG\pi_{1}(e_{2}))g_{2} 
    \bigr),
\end{align*}
so that
\begin{align*}
     \Omega(
    \xi_{1}\xi_{2}
     )
    &=
     \Omega\left(
    \bigl[
        \lambda_{1}\lambda_{2}, (e_{1}f)\astonetwo
        \bigl(z_{1}z_{2},
        (g_{1}\HrightG\pi_{1}(e_{2}))g_{2} 
        \bigr)
    \bigr]
     \right)
     \\
     &=
      \bigl(
        [z_{1}z_{2}\lambda_{1}\lambda_{2}, e_{1}f],
        (g_{1}\HrightG\pi_{1}(e_{2}))g_{2} 
    \bigr)
    \\
    &=
    \Bigl(
    [z_{1}\lambda_{1},  e_{1} ]
    \left(
        g_{1} \HleftB 
    [z_{2}\lambda_{2},  e_{2} ]
    \right)
    , 
        (g_{1}\HrightG\pi_{1}(e_{2}))g_{2} 
    \Bigr)
    &&\text{%
    by Eq.~\eqref{eq:our choice of f}
    }%
    \\
    &
    =
    \bigl(
    [z_{1}\lambda_{1},  e_{1} ], g_{1}
    \bigr)
    \bigl(
    [z_{2}\lambda_{2},  e_{2} ], g_{2}
    \bigr)
    &&
    \text{by def'n of $L_{1}\bowtie \cG_{2}$ (Eq.~\eqref{eq:Fbl multiplication})}
    \\
    &=\Omega(\xi_{1})\Omega(\xi_{2}).
\end{align*}
To see that $\Omega$ is $*$-preserving, we compute
\begin{align*}
   \bigl[
       \lambda, e \astonetwo (z,g )
   \bigr]^*
   &=
   \bigl[
       \overline{\lambda}, \left(e \astonetwo (z,g )\right)\inv
   \bigr]
   =
   \bigl[
       \overline{\lambda}, 
       \Phi\left((z,g )\inv \asttwoone e\inv \right)
   \bigr].
\end{align*}
Thus, if $h\in\Sigma_{1}$ is such that $h\astonetwo (1,g\inv \HrightG\pi_{1}(e\inv)) = \Phi((1,g\inv )\asttwoone e\inv)$, then
 again by   the definition of $\HleftB$ at~\eqref{eq:def HleftB}, we have
\begin{equation}\label{eq:our choice of h}
   g\inv \HleftB [z\lambda,e]^*
   =
   g\inv \HleftB [\overline{z\lambda},e\inv]
   =
   [\overline{z\lambda},h].
\end{equation}
Lastly,
\begin{align*}
    \Omega\left(
   \bigl[
       \lambda, e \astonetwo (z,g )
   \bigr]^*
   \right)
   &=
   \Omega\left(
   \bigl[
       \overline{\lambda}, 
       h\astonetwo (\overline{z},g\inv \HrightG\pi_{1}(e\inv))
   \bigr]
   \right)
   \\
   &=
   \bigl(
       [\overline{z\lambda}, 
       h],
       g\inv \HrightG\pi_{1}(e\inv)
   \bigr)\\
   &
   =
    \bigl(
    g\inv \HleftB [z\lambda,  e ]^*, g\inv \HrightG \pi_{1}(e)\inv 
    \bigr)    
    &&\text{by 
    Eq.~\eqref{eq:our choice of h}
    }
\\&=
\bigl(
    [z\lambda,  e ], g 
    \bigr)^*
    =
    \Omega\left(
    \bigl[
        \lambda, e \astonetwo (z,g )
    \bigr]
    \right)^*.
\end{align*}
This concludes our proof that $\Omega\colon L\to \cB$ is an isomorphism of Fell bundles.
    
\printbibliography
\end{document}